\def\hbar{h}
\newtheorem{theorem}{Theorem}[section]
\newtheorem{proposition}[theorem]{Proposition}
\newtheorem{lemma}[theorem]{Lemma}
\newtheorem{corollary}[theorem]{Corollary}
\newtheorem{remark}[theorem]{Remark}
\newtheorem{definition}[theorem]{Definition}
\newtheorem{example}[theorem]{Example}
\newtheorem*{corollary*}{Corollary}
\newtheorem*{theorem*}{Theorem}
\def\C{\mathbb{C}}
\def\Q{\mathbb{Q}}
\def\T{{\mathbb{A}}}
\def\TT{{\mathbb{T}}}
\def\tt{{\bf t}}
\def\R{\mathbb{R}}
\def\Z{\mathbb{Z}}
\def\P{\mathbb{P}}
\def\Hom{{\rm Hom}}
\def\O{{\mathcal  O}}
\def\L{{\mathcal L}}
\def\vt{\vartheta}
\def\tt{{\mathbf t}}
\def\ii{{i}}
\def\mC{{\rm mC}}
\def\rk{{\rm rk}}
\def\D{{\widetilde{\partial Y}}}
\def\K{{\mathcal K}}
\DeclareMathOperator\Ne{{\mathcal N}}
\def\ee{{\bf e}}
\def\rank{\operatorname{rank}}
\def\we{{\rm w}}
\def\iee{i}
\def\Ell{{\mathcal E\ell\ell}}
\author{Jakub Koncki}
\address{Institute of Mathematics, University of Warsaw, Poland}
\email{j.koncki@mimuw.edu.pl}
\author{Andrzej Weber}
\address{Institute of Mathematics, University of Warsaw, Poland}
\email{aweber@mimuw.edu.pl}
\thanks{Both authors are supported by NCN grant  2016/23/G/ST1/04282 (Beethoven 2)}
\title[Twisted motivic Chern class and stable envelopes]
{\large Twisted motivic Chern class\\ and stable envelopes}
\begin{document}
\begin{abstract}We present a  definition of {\em twisted motivic Chern classes} for singular pairs $(X,\Delta)$ consisting of a singular space $X$ and a $\Q$-Cartier divisor containing the singularities of $X$.  The definition  is a mixture of  the construction of motivic Chern classes previously defined by Brasselet-Sch{\"u}rmann-Yokura with the construction of multiplier ideals. The twisted motivic Chern classes are the limits of the elliptic classes defined by Borisov-Libgober. We show that with a suitable choice of the divisor $\Delta$ the twisted motivic Chern classes satisfy the axioms of the stable envelopes in the K-theory. Our construction is an extension of the results proven by the first author for the fundamental slope.
\medskip

\noindent {\it Keywords}:
Twisted motivic Chern class; stable envelope; Bia{\l}ynicki-Birula decomposition; torus localization.
\end{abstract}
\maketitle

\baselineskip 15pt

Algebraic torus action on a complex variety induces a decomposition into attracting sets, called Bia{\l}ynicki-Birula cells. The closures of the Bia{\l}ynicki-Birula cells may be singular and their properties, in particular their cohomological invariants, were studied recently by many authors. However not much is known beyond the classical examples, such as the Schubert varieties in the homogeneous spaces. We would like to present some general statements connecting Okounkov stable envelopes with a version of motivic Chern classes.
{The notion of stable envelope is evolving. Initially it was
 defined for torus actions on symplectic manifolds, under assumption that the cells satisfy certain regularity conditions (see \cite{O3} for a recent development in which manifold is not assumed symplectic).}
 The definition is axiomatic and only  particular examples were studied in detail.
We will consider the case when the symplectic manifold is the cotangent bundle of a smooth variety on which the torus acts with finitely many fixed points.  In that context we construct  the stable envelopes (for an arbitrary slope) via resolution of singularities of the closures of Bia{\l}ynicki-Birula cells.

\medskip The idea of connecting characteristic classes with stable envelopes originates from parallelly written papers \cite{RV} or \cite{FR} and \cite{AMSS0}. There, the Chern-Schwartz-MacPherson classes in equivariant cohomology were compared with cohomological stable envelopes. Next,  the  K-theoretical stable envelopes were compared  with equivariant motivic Chern classes, \cite{FRW, AMSS}. It was done only for a special  class of spaces, in particular for the generalized flag varieties, and only for a special value of the {\it slope}, on which stable envelopes depend. One should mention also the work on weight functions, which apply for classical partial flag varieties $GL_n/P$, \cite{RTV}. Further came papers comparing elliptic stable envelopes with Borisov-Libgober elliptic classes. The work was done only for generalized flag varieties $G/P$, \cite{RW, KRW}. Now, motivated by the results in elliptic theory we come back to the K-theory, but we consider a general slope, without any further assumption on the variety, except that the torus acts with isolated fixed points. The paper is a continuation of \cite{K}, where only the fundamental slope was considered.
\medskip

We begin with the definition of the {\em twisted motivic Chern classes}. We assume that $X$ is a singular variety, $\Delta$ is a $\Q$-Cartier divisor on $X$ and that the support of $\Delta$ contains the singularities of $X$. The construction is a combination of the Brasselet-Sch{\"u}rmann-Yokura definition \cite{BSY} with the ideas coming from the theory of multiplier ideals, \cite[\S 9]{La}.  The twisted motivic Chern class can also be interpreted as a limit of the elliptic class constructed by Borisov and Libgober for Kawamata log-terminal pairs. Our classes, denoted by $\mC(X;\Delta)$  (and a relative version $\mC(X,\partial X;\Delta)$) are defined in the K-theory of coherent sheaves of $X$.  The definition is explicit in terms of the resolution of singularities. We apply the Weak Factorization Theorem to show that the resulting class is well defined. In the equivariant context, i.e.~when we assume a group action on $X$, we obtain classes in the equivariant K-theory.
\medskip

An important technical result of the paper is Theorem \ref{thm:main}. Assume that a torus $\T$ is acting on a smooth variety $M$ and $x\in M$ is an isolated fixed point. Let $X=\overline M^+_x$ be the closure of a Bia{\l}ynicki-Birula cell and let $i:X\hookrightarrow M$ be the inclusion. The result  says that the localized twisted motivic Chern classes satisfy the Newton inclusion property. Roughly speaking, for any boundary $\Q$-Cartier divisor $\Delta$ the restricted class $$i_*\mC(X,\partial X;\Delta)_x\in K_\T(\{x\})[y]$$
(which can be interpreted as a Laurent polynomial) has the exponents bounded by the exponents of the Euler class of the ambient space, and eventually shifted by a vector determined by the divisor.
\medskip

Finally we state our main result.
Let $M$ be a smooth projective manifold with a torus $\T$ action. Let us assume that the fixed point set $M^\T$ is finite. Let $$M=\bigsqcup_{\ee\in M^\T} M_\ee^+$$ be the decomposition into the Bia{\l}ynicki-Birula cells.
The twisted motivic Chern classes applied to the closures of the  cells $M_\ee^+\subset M$, with a suitably chosen divisors $\Delta_{\ee,s}$ satisfy two of three axioms of Okounkov stable envelopes without any additional assumptions.

\begin{theorem*}[see Theorem \ref{tw:podsumowanie}]
	The (rescaled) classes
	$$i_*\mC(\overline M^*_\ee,\partial M^*_\ee;\Delta_{\ee,s})\in K_\T(M)[y]\subset K_{\T\times\C^*}(T^*M)$$
	{satisfy} the normalization axiom and the Newton inclusion property of stable envelopes.\end{theorem*}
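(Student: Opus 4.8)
\emph{Overview and the normalization axiom.}
The plan is to treat the two axioms separately. For the normalization, recall that $\ee$ is the source of the Bia\l ynicki-Birula cell, so a neighbourhood of $\ee$ in $\overline M^*_\ee$ is the smooth affine space of attracting (resp.\ repelling) directions inside $T_\ee M$, and $\ee$ does not meet the boundary $\partial M^*_\ee$; hence $i_*\mC(\overline M^*_\ee,\partial M^*_\ee;\Delta_{\ee,s})|_\ee=i_*\mC(\overline M^*_\ee;\Delta_{\ee,s})|_\ee$. For a smooth variety with a $\Q$-divisor the twisted motivic Chern class is given directly, with no resolution needed, by the $\lambda_y$-class of the cotangent bundle of the cell twisted by the fractional/integral correction attached to $\Delta_{\ee,s}$. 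Restricting to $\ee$ and multiplying by the factor $\lambda_{-1}(T^*M)|_\ee$ coming from the zero-section embedding $M\hookrightarrow T^*M$ gives an explicit product over the weights of $T_\ee M$. The divisors $\Delta_{\ee,s}$ are defined precisely so that, after the standard rescaling by a power of $(-\hbar)$ and of the characters, this product equals the prescribed diagonal value of the stable envelope of slope $s$; this is a direct verification from the definition of $\Delta_{\ee,s}$ in terms of $s$ and the Bia\l ynicki-Birula order.

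\emph{The Newton inclusion.}
Fix $x\in M^\T$. The restriction $i_*\mC(\overline M^*_\ee,\partial M^*_\ee;\Delta_{\ee,s})|_x$ depends only on the $\T$-equivariant germ of the triple $(\overline M^*_\ee,\partial M^*_\ee,\Delta_{\ee,s})$ at $x$, which is computed inside a $\T$-invariant affine chart $U_x\cong T_xM$ carrying the linearized action. Using a $\T$-equivariant resolution of $\overline M^*_\ee$ as in the proof of Theorem~\ref{thm:main}, the same localization computation --- now performed over the fixed point $x$ rather than over the source --- bounds, from all sides, the Newton polytope of $\mC(\overline M^*_\ee;\Delta_{\ee,s})|_x$ by the Newton polytope of the Euler class of $T_xM$ (or of the $\T$-invariant subspace spanned by the weights actually occurring in the germ), shifted by the vector determined by $\Delta_{\ee,s}$. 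It then remains to multiply by the zero-section factor $\lambda_{-1}(T^*_xM\cdot\hbar^{-1})$, substitute $y=-\hbar$, and check that the resulting polytope lies inside the degree box that the Newton inclusion axiom prescribes for the pair $(\ee,x)$ at slope $s$; this combinatorial matching of the shift of $\Delta_{\ee,s}$, the $\hbar$-weighted contribution of the cotangent directions, and the position of $x$ relative to $\ee$ is the technical heart of the proof.

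\emph{The main obstacle.}
The delicate point is the Newton inclusion at fixed points $x\neq\ee$. One cannot simply say that $\overline M^*_\ee$ lies, near $x$, in the attracting set of $x$ --- this already fails on $\P^1\times\P^1$ --- nor is the germ of $\overline M^*_\ee$ at $x$ a Bia\l ynicki-Birula cell closure in the chart, so Theorem~\ref{thm:main} does not apply verbatim and one must rerun its localization computation for this more general germ, using the full two-sided control of the exponents and a careful description of which weights of $T_xM$ occur. Fitting the bound obtained this way inside the stable-envelope box after the $\hbar$-twist, uniformly in $x$ and in the slope $s$, is where most of the work lies; keeping straight the three independent sources of shifts --- the divisor $\Delta_{\ee,s}$, the embedding into $T^*M$, and the Bia\l ynicki-Birula order relating $\ee$ and $x$ --- is the main bookkeeping hurdle.
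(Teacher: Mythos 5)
There is a genuine gap: the Newton inclusion property, which is the substance of the theorem, is never actually proved in your proposal. You treat Theorem~\ref{thm:main} as a statement about the restriction at the source of the cell and then say that for $x\neq\ee$ it ``does not apply verbatim and one must rerun its localization computation''; but in the paper Theorem~\ref{thm:main} \emph{is} exactly the inclusion \eqref{wyr:3} at an arbitrary fixed point $\ee<\ee_0$ of the ambient space, so the step you defer is the whole content, and your sketch (``bounds, from all sides, the Newton polytope \dots this combinatorial matching \dots is the technical heart'') restates the goal without supplying a mechanism. The paper's mechanism consists of: Lemma~\ref{lem:Newton}, replacing the axiom's polytope $\Ne^\T(eu(\nu^-_\ee))-\we_\ee(\det T^+_\ee M)$ by $\Ne^\T(eu(T_\ee M))$; the good-section construction of $\Delta_{\ee_0,s}$ in \S\ref{sec:dependency}, giving $\we_\ee(\L)-\we_{\ee_0}(\L)=\we_\ee(\O_X(n\Delta_{\ee_0,s}))$ and hence converting the slope shift of axiom \ref{def:enve}(3) into the divisor shift of \eqref{wyr:3}; a global SNC resolution of $(X,\partial X)$ and Lefschetz--Riemann--Roch \eqref{wyr:1} to localize over the fixed components $F\subset f^{-1}(\ee)\cap Y^\T$; Lemma~\ref{lem:FRW}, translating polytope containment into existence of limits along generic one-parameter subgroups; Lemmas~\ref{lem:prostowanie} and \ref{lem:conv}, which handle the fractional round-up $\O_Y(\lceil f^*\Delta\rceil)$ by sandwiching its weight in the convex hull of the weights of the integral bundles $L_I$; and the BB-limit theorem for motivic Chern classes (\cite{FRW}, used through Lemma~\ref{lem:SNC} and Corollary~\ref{cor:SNC}) to show the relevant limits exist. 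None of these ingredients, nor any substitute for them, appears in your germ-by-germ outline, and a direct ``two-sided control of exponents'' at each fixed point is not obviously obtainable without such tools.

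Two secondary points are also off. First, there is no ``zero-section factor'' $\lambda_{-1}(T^*M)|_\ee$ or $\lambda_{-1}(T^*_xM\cdot\hbar^{-1})$: the class is placed in $K_{\T\times\C^*}(T^*M)$ by $\pi^*$ along the projection, which contributes no Euler factor and does not change the $\T$-Newton polytope; the extra factor at the center is $\lambda_{-1}((T^-_{\ee_0}M)^*)$ coming from $i_*$ for $X\hookrightarrow M$, and all of the $\hbar$-dependence sits on the axiom side in $eu(\nu^-_{\ee_0})$, matched via $\lambda_{-1}(V)=(-1)^{\dim V}\det(V)\,\lambda_{-1}(V^*)$ after $y\mapsto-\hbar$ and the rescaling by $h^{-\dim M^+_{\ee_0}}$. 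Second, it is not true that ``the divisors $\Delta_{\ee,s}$ are defined precisely so that this product equals the prescribed diagonal value'': since $\Delta_{\ee,s}$ is supported in the boundary it is invisible at $\ee$, and the normalization axiom is independent of the slope; $\Delta_{\ee,s}$ is defined as $\frac1n\,\mathrm{div}(v)$ for a $\T$-eigen meromorphic section $v$ of $\L$ not vanishing at the center, and its only role is to produce the slope shift in the Newton axiom --- the existence of such a section and the resulting weight identity are facts your argument would also need and does not establish.
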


 The remaining axiom -- the support axiom -- is of a different nature. Its validity depends on the regularity of the Bia{\l}ynicki-Birula decomposition and it does not hold automatically. To even begin we have to assume that the BB-decomposition is a stratification satisfying the Whitney condition A. The stratification by Schubert cells in the homogeneous space $G/P$, where $G$ is a semisimple group and $P$ a parabolic subgroup, is regular enough. It is locally of a product form. Therefore we obtain

\begin{corollary*}
	Consider a homogeneous variety $G/P$ with the action of the maximal torus $\T$. The twisted motivic Chern   classes
		$$i_*\mC(\overline{G/P^+_{w}},\partial (G/P^+)_{w};\Delta_{w,s})\in K_\T(G/P)[y]\subset K_{\T\times\C^*}(T^*G/P)$$
after a suitable rescaling satisfy the axioms of the stable envelopes.
\end{corollary*}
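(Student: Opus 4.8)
The plan is to reduce everything to Theorem~\ref{tw:podsumowanie} together with the classical regularity of the Bruhat stratification. First I would set up $M=G/P$ with the maximal torus $\T\subset G$ acting and fix a generic regular cocharacter of $\T$, so that the Bia{\l}ynicki-Birula decomposition of $G/P$ is the Bruhat decomposition into Schubert cells, which I write $X_w:=(G/P)^+_w$, indexed by the minimal length coset representatives $w\in W^P$, and $\T$ acts on $G/P$ with isolated fixed points. By Theorem~\ref{tw:podsumowanie} the rescaled classes $i_*\mC(\overline{X_w},\partial X_w;\Delta_{w,s})$ already satisfy the normalization axiom and the Newton inclusion property with no hypothesis on the ambient variety, so the only content of the corollary is the remaining support axiom. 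For that it suffices, by the criterion indicated in the paragraph before the corollary, to check that the Bia{\l}ynicki-Birula decomposition of $G/P$ is a filtrable stratification satisfying Whitney's condition~A (and, for the geometric input used there, that it is locally of product form).

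Both facts about the Schubert stratification are classical, and I would recall them as follows. Filtrability: the closure of every cell is a union of cells, $\overline{X_w}=\bigsqcup_{w'\le w}X_{w'}$, where $\le$ is the Bruhat order, and the partial order $\preceq$ on fixed points determined by the chosen chamber of $\T\times\C^*$ contains the Bruhat order (automatic for the chamber realizing the Schubert decomposition; this is where one uses that the $\T$-weights on the tangent spaces $T_w(G/P)$ are the appropriate sums of roots). Since $i_*\mC(\overline{X_w},\partial X_w;\Delta_{w,s})$ is the direct image of a class supported on $\overline{X_w}$, its support in $T^*(G/P)$ is the preimage $\pi^{-1}(\overline{X_w})$ under $\pi\colon T^*(G/P)\to G/P$; taking the chamber of $\C^*$ in which the cotangent directions are attracting we have $\mathrm{Attr}(w')=\pi^{-1}(X_{w'})$, so $\pi^{-1}(\overline{X_w})=\bigcup_{w'\le w}\mathrm{Attr}(w')\subseteq\bigcup_{w'\preceq w}\mathrm{Attr}(w')$, which is the full attracting set of $w$ — this is the support axiom. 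For the Whitney regularity I would invoke the Kazhdan-Lusztig description of the local structure of Schubert varieties, equivalently the product form of the Bia{\l}ynicki-Birula decomposition of $G/P$ obtained by sweeping $\overline{X_w}$ with the opposite unipotent radical: near any point of a stratum $X_{w'}\subset\overline{X_w}$ the triple $(\overline{X_w},\,\overline{X_w}\setminus X_w,\,\Delta_{w,s})$ is, $\T$-equivariantly and \'etale-locally, a product of a $\T$-invariant affine space (the $X_{w'}$-direction) with a transversal slice which is again a Schubert-type stratified pair. In particular the stratification satisfies Whitney's condition~A (in fact~B), and the divisors $\Delta_{w,s}$ respect the product structure; feeding this into the support criterion finishes the verification.

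I do not expect a deep obstacle here: once Theorem~\ref{tw:podsumowanie} and the support criterion under the Whitney-A hypothesis are available, the corollary is essentially a translation between the axiomatics of stable envelopes and the classical geometry of $G/P$. The two points that genuinely require care -- and that I would make explicit -- are, first, that the order on fixed points entering the axioms really does contain the Bruhat order for the chamber realizing the Schubert decomposition, so that the full attracting sets over $w'\preceq w$ cover $\pi^{-1}(\overline{X_w})$; and second, that the local product structure of Schubert varieties can be taken $\T$-equivariant and compatible both with the boundary divisors and with the rescaling of Theorem~\ref{tw:podsumowanie}. Neither is hard, but neither is completely formal, and the interface with the divisors $\Delta_{w,s}$ is the place where one must be slightly careful.
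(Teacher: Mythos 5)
Your reduction to Theorem \ref{tw:podsumowanie} for normalization and the Newton inclusion property is exactly right, and you correctly identify the local product structure of the Schubert stratification as the geometric input for $G/P$. But the explicit argument you give for the support axiom contains a genuine error. The attracting set of a fixed point $w'$ in $T^*M$ is \emph{not} $\pi^{-1}(X_{w'})$: the chamber in Definition \ref{def:enve} is a co-weight chamber of $\T$ (the subtorus preserving the symplectic form), the scaling $\C^*_\hbar$ is not part of it, and $(T^*M)^+_{w'}$ is the \emph{conormal bundle} of the cell $M^+_{w'}$, as stated in Section \ref{s:stable}. Consequently the support axiom demands that $\rho(\pi^*i_*\mC(\overline{X_w},\partial X_w;\Delta_{w,s}))$ be supported on the union of conormal bundles over $w'\le w$, which is a much smaller set than $\pi^{-1}(\overline{X_w})$. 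Your chain of identifications makes the axiom hold automatically for any pullback of a class supported on $\overline{X_w}$, with no regularity hypothesis at all --- which cannot be correct, since the paper stresses that the support axiom is precisely the part that fails without extra assumptions on the stratification.

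The actual mechanism, which your write-up never engages, is Lemma \ref{lem:pod}: support of the class $\alpha=i_*\mC(\overline{X_w},\partial X_w;\Delta_{w,s})$ on $\bigsqcup_{w'\le w}M^+_{w'}$ (this part is the easy one, coming from filtrability of the projective $M$) must be supplemented by the divisibility condition that $\lambda_y(T^*_{w'}M^+_{w'})$ divides $\alpha_{|w'}$ at every fixed point $w'\le w$. It is this divisibility that forces the pulled-back class into the union of conormal bundles, and it is exactly here that the local product form of the Bia{\l}ynicki-Birula (Bruhat) decomposition of $G/P$ is used, via the local and product properties of the twisted classes (Proposition \ref{prop:prod}) applied to a product neighbourhood of $w'$. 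You do invoke the Kazhdan--Lusztig product structure and Whitney A, but only to guarantee regularity of the stratification (which is needed merely to make the axiom well posed, i.e.\ to have the union of conormal bundles closed); you never feed it into the divisibility statement, and your explicit support argument bypasses the divisibility incorrectly. To repair the proof, replace the paragraph based on $\mathrm{Attr}(w')=\pi^{-1}(X_{w'})$ by: (i) the support containment in $\bigsqcup_{w'\le w}M^+_{w'}$, and (ii) the verification that near each $w'\le w$ the pair together with the divisor is $\T$-equivariantly a product, so Proposition \ref{prop:prod} gives the factor $\lambda_y(T^*_{w'}M^+_{w'})$ in $\alpha_{|w'}$, and then conclude by Lemma \ref{lem:pod}.
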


We would like to stress that the identification of twisted motivic Chern classes as K-theoretic stable envelopes can be applied in much more general situations.
\bigskip

We would like to thank J{\"o}rg Sch{\"u}rmann for valuable conversations and for
various important remarks.

\tableofcontents

\section{Notation}
\label{notacja}
\subsection{Weights and Newton polytopes}
\label{notacja1}

We assume that the spaces considered by us are equipped with a torus $\T\simeq (\C^*)^{\dim(\T)}$ action. The weights of the torus action will play a fundamental role in our consideration.
\medskip

Let $\L\to X$ be an equivariant line bundle i.e.~a line bundle together with a linearization. Suppose that $p\in X$ is a fixed point. Then the fiber $\L_{p}$ is a representation of $\T$. By $\we_p(\L)\in \Hom(\T,\C^*)\otimes\Q$ we denote the weight of this representation.
The weight is a locally constant function on $X^\T$, that is if $p$ and $q$ are two points belonging to the same component of $X^\T$, then $\we_p(\L)=\we_q(\L)$ and it makes sense to define $\we_F(\L)$ for a component $F\subset X^\T$.
	{Let $D$ be an equivariant Cartier divisor in $X$. It induces an element in the equivariant Chow group $A^{\T}_{\dim X-1}(X)$. By \cite[Theorem 1]{EG} the first Chern class induces an isomorphism $Pic^\T(X)\simeq A^{\T}_{\dim X-1}(X)$. Therefore, the line bundle $\O_X(D)$ is equipped with the natural choice of linearization.
	For a smooth fixed point $p\in D^\T$ the weight $\we_p(\O_X(D))$ is the normal weight to $D$ at $p$. For a fixed point $p$ which does not belong to the support of $D$, we have $\we_p(D)=0$.}
	Let
	$$\we_p(D)=\we_p(\O_X(D))\,,$$
	where $\O_X(D)$ is considered with the natural linearization.  We extend the notion of weights to $\Q$-Cartier divisors:
if $mD$ is integral, then $$\we_p(D)=\frac1m\we_p(\O_X(mD))\,.$$
\medskip

For a weight $w\in  \Hom(\T,\C^*)$ we use additive notation. The monomial $t^w$ is understood as a one dimensional representation of $\T$. For a fractional weight $w\in \Hom(\T,\C^*)\otimes\Q$ the symbol $t^w$ denotes a one dimensional representation of a suitable cover $\widetilde \T$ of the torus $\T$. For a given element $\alpha \in K_\T(X)$ the product $t^w\alpha$ makes sense in $K_{\widetilde \T}(X)$.
\medskip

Suppose the torus $\T$ acts trivially on $F$. Then $K_\T(F)\simeq K(F)\otimes \Hom(\T,\C^*)$. Let $\alpha\in K_\T(F)$. The element $\alpha$ can be uniquely written as $\sum_{i=1}^k t^{w_i}\alpha_i$ with all $w_i$ distinct and $\alpha_i\in K(F)$ nonzero. Then the Newton polytope of $\alpha$ is defined as the convex hull
$$\Ne^{ \T}(\alpha)=conv(w_i\;|\;i=1,2,\dots, k)\,.$$
We have
$$\Ne^{ \T}(t^w\alpha)=\Ne^{ \T}(\alpha)+w$$
for $\alpha\in K_\T(F)$, $w\in \Hom(\T,\C^*)\otimes\Q$.

\subsection{K-theory classes}

We consider spaces with a torus $\T\simeq(\C^*)^{rk(\T)}$ action. For the sections \S\ref{sec:def} and \S\ref{sec:niez} we can assume that a more general linear algebraic group $G$ is acting.
By $\K(-)$ we mean the equivariant K-theory (topological or algebraic) {of vector bundles} with the additional variable $y$ adjoint
$$\K(M):=K_G(M)[y]\,.$$
The K-theoretic Euler class of a complex vector bundle $E$ is defined as
$$eu(E)=\lambda_{-1}(E^*)=\sum_{k=0}^{{\rm rk}(E)}(-1)^k\Lambda^k E^*\,.$$
\medskip

The motivic Chern class for a map of varieties $f:X\to M$ was defined in \cite{BSY}. When we assume that $M$ is a smooth variety, then $$\mC(f:X\to M)\in \K(M)\,.$$
We refer the reader to \cite{FRW, AMSS} for the definition in the equivariant context. For a subvariety $X\subset M$ we set
$\mC(X\subset M)=\mC(\iota: X\to M)$, where $\iota:X\to M$ is the inclusion.
It is possible to consider the  motivic Chern class in the K-theory of coherent sheaves on $X$, but for our purposes it is enough to consider the singular subvarieties of an ambient space.
{For an equivariant vector bundle $E$ over $X$ let
$$\lambda_{y}(E)=\sum_{k=0}^{{\rm rk}(E)}(-1)^k\Lambda^k E \in \K(X)\,.$$}
If $X$ is smooth, then by $\mC(X)$ we mean
\begin{equation}\label{eq:mC-norm}\mC(id_X)=\lambda_y(T^*X)=\sum_{k=0}^{\dim X} [\Omega^k_X] y^k\in \K(X)\,,\end{equation}
where $\Omega^k_X$ stands for the sheaf of $k$-differential forms on $X$.
If $X\subset M$ is a smooth but not necessarily closed subvariety, then we can compute $\mC(X\subset M)$ using functorial properties of the motivic Chern class. Let $f:Y\to M$ be a proper map which resolves the singularities of the closure of $X$. Suppose $f_{|f^{-1}(X)}:f^{-1}(X)\to X$ is an isomorphism and the complement of $f^{-1}(X)$ is a simple normal crossing divisor $D=\bigcup_{i\in I} D_i$. For $J\subset I$ set $D_J=\bigcup_{i\in J} D_i$, $D_\emptyset=Y$. Then
\begin{equation}\label{def:mc}\mC(X\subset M)=\sum_{J\subset I}(-1)^{|J|}(f_{|D_J})_*\big(\mC(D_J)\big)\,.\end{equation}
In terms of the sheaf of logarithmic forms on $Y$
\begin{equation*}\mC(X\subset M)=f_*\big(\O_Y(-D)\otimes\lambda_y\Omega^1_Y(\log D)\big)\,,\end{equation*}
see \cite[p. 19]{MS}.
The above formula is explicit, but to apply it one has to know a good resolution of the closure of $X$. It is not always easy to find one. The equivariant situation is somehow easier. In \cite{FRW, FRW2} there was given a characterization of the motivic Chern classes in terms of support and the Newton polytope, provided that an algebraic group acts on $M$ with finitely many orbits and we compute the motivic Chern class of an orbit.

\section{Definition of the twisted motivic Chern class}
\label{sec:def}
Let $G$ be an algebraic linear group, possibly trivial. Later on we will assume that $G$ is a torus, but for the definition of the twisted motivic Chern classes it is irrelevant. The definition given below is meaningful also for the trivial group.
\medskip

Let $X$ be an algebraic variety with a distinguished closed subset $\partial X$, which we call {\it the boundary}. We assume that $X^o=X\setminus \partial X$ is smooth. Let $\Delta$ be a $\Q$-Cartier divisor on $X$ with support contained in the boundary $|\Delta|\subset \partial X$.
{
\begin{definition}[cf. {\cite[Definition 9.1.2]{La}}]
	Let $\Delta=\sum q_iD_i$ be a $\Q$-divisor on $X$. We define the round-up divisor $\lceil D\rceil$ as
	$$\lceil D\rceil=\sum\lceil q_i\rceil D_i \,.$$
\end{definition}
}
\begin{definition}\label{def-twisted}\rm The twisted motivic Chern class of the triple $(X,\partial X; \Delta)$ as above is defined by the formula
$$\mC(X,\partial X;\Delta)=f_*\Big(\O_Y(\lceil f^*(\Delta)\rceil) \cdot \mC(Y^o\subset Y)\Big) \,,$$
where  $f:(Y,\partial Y)\to (X,\partial X)$ is a resolution of singularities such that $\partial Y=f^{-1}(\partial X)$ is a simple normal crossing (SNC for short) divisor, $Y^o=Y\setminus \partial Y$.

We assume that $f_{|Y^o}:Y^o\to X^o$ is an isomorphism.
Later we will skip $\partial X$ in the notation, when $\partial X=|\Delta|$.  \end{definition}
The twisted motivic Chern class is an element of the K-theory of (equivariant) coherent sheaves tensored with the polynomial ring $\Z[y]$. In our application we will study the image of $\mC(X,\partial X;\Delta)$ in the K-theory of a smooth ambient space. There the K-theory of coherent sheaves is isomorphic to the K-theory of locally free sheaves.

Before proving that the definition does not depend on the choice of the resolution let us show the simplest example.

\begin{example}[Basic calculation]\rm
Let $X=\P^1$, $\partial X=\{0\}$ with the standard {$\C^*=\T$} action. The tangent character at $0\in (\P^1)^\T$ is equal to $t$ and the character at $\infty$ is equal to $t^{-1}$.  Let $\Delta=\lambda\{0\}$, $\lambda\in\Q$.
Since $\P^1$ is already smooth, in the Definition \ref{def-twisted} we take $\P^1=Y$ and $f=id$, hence
$$\mC(\P^1;\Delta)=\O(\lceil \Delta\rceil)\cdot \mC(X\setminus |\Delta|\subset X)=\O(\lceil \lambda\rceil \{0\})\cdot\mC(\P^1\setminus\{0\}\subset \P^1)\,.$$
The weights of the divisor $\Delta$ at the fixed points are the following
$$\we_0(\Delta)=\lambda\,,\qquad \we_\infty(\Delta)=0$$
and the restrictions of $\mC(X;\Delta)$  are the following
$$ \mC(\P^1;\Delta)_{|0}=t^{\lceil \lambda\rceil}\cdot(1+y)t^{-1}=(1+y)t^{\lceil \lambda\rceil-1}\,,\qquad
\mC(\P^1;\Delta)_{|\infty}=1+yt\,.$$
We will consider the Newton inclusion property, see Definition \ref{def:enve}(3)
\begin{equation}\label{newton-intro}\Ne^{ \T}(\mC(X;\Delta)_{|0})-\we_0(\Delta)\subset \Ne^{ \T}(eu(TX)_{|0})\,.\end{equation}
The Newton polytopes in this example are the following
$$\Ne^{ \T}(\mC(X;\Delta)_{|0})=\{\lceil \lambda\rceil-1 \}\,,$$
$$\Ne^{ \T}(eu(TX)_{|0})=[-1,0].$$
The first one consists of a single element, the second one is an interval in $\Hom(\T,\C^*)\otimes\Q\simeq \Q$.
The Newton polytope inclusion clearly holds
$$\{\lceil \lambda\rceil-1 \}-\lambda=\{\lceil \lambda\rceil-\lambda-1 \}\subset [-1,0]\,.
$$
For  $\lambda\not\in \Z$ the first polytope is contained in the interior of the second.
\end{example}
{
\begin{remark}\rm
	Suppose that $\Delta_1$ is an integral Cartier divisor. Then for an arbitrary \hbox{$\Q$-Cartier} divisor $\Delta_2$
	$$\mC(X,\partial X;\Delta_1+\Delta_2)=\O_X(\Delta_1)\mC(X,\partial X;\Delta_2) $$
	by projection formula. This agrees with the behaviour of stable envelopes for  $G/B$,  \cite[Lemma 8.2c]{AMSS}. See also Remark \ref{rk:funktorialnosc} below.
\end{remark}
\begin{proposition} \label{pro:deform}
	Let $\Delta_1$ be an arbitrary \hbox{$\Q$-Cartier} divisor.  Let $\Delta_2$ be an effective $\Q$-Cartier divisor. Suppose that  $\Delta_2$ is small enough. Then
	$$\mC(X,\partial X;\Delta_1-\Delta_2)=\mC(X,\partial X;\Delta_1)\,. $$
	For example
	$$\mC(X,\partial X;-\Delta_2)=\mC(X^o\subset X)\,. $$
\end{proposition}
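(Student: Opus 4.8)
The plan is to reduce everything to the definition via a single well-chosen resolution and then to exploit the fact that rounding up a divisor is insensitive to subtracting a small enough effective piece. First I would fix a common log-resolution $f:(Y,\partial Y)\to(X,\partial X)$ with $\partial Y=f^{-1}(\partial X)$ a simple normal crossing divisor and $f_{|Y^o}:Y^o\to X^o$ an isomorphism; by the independence-of-resolution statement (which we may assume, as it is proved just after the definition in the excerpt) the same $f$ computes both $\mC(X,\partial X;\Delta_1)$ and $\mC(X,\partial X;\Delta_1-\Delta_2)$. Pulling back, write $f^*\Delta_1=\sum_i a_i E_i$ and $f^*\Delta_2=\sum_i b_i E_i$ as $\Q$-divisors supported on the components $E_i$ of $\partial Y$ (here I use that $|\Delta_1|,|\Delta_2|\subset\partial X$, so their pullbacks are supported on $\partial Y$). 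Then $f^*(\Delta_1-\Delta_2)=\sum_i(a_i-b_i)E_i$, and the claim becomes $\O_Y\big(\lceil f^*(\Delta_1-\Delta_2)\rceil\big)=\O_Y\big(\lceil f^*\Delta_1\rceil\big)$ as sheaves, which holds as soon as $\lceil a_i-b_i\rceil=\lceil a_i\rceil$ for every $i$.

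The key elementary point is: for a real number $a$, $\lceil a-b\rceil=\lceil a\rceil$ whenever $0\le b<\lceil a\rceil-a$ if $a\notin\Z$, and whenever $0\le b<1$ if $a\in\Z$ (more uniformly, it suffices that $b$ be strictly smaller than the "fractional deficiency" $\lceil a\rceil-a$ when that quantity is positive, and that $b<1$ otherwise; in all cases $b$ small enough works since there are finitely many $i$). So I would make precise the phrase "$\Delta_2$ small enough" to mean exactly: $\Delta_2$ is effective and the coefficients $b_i$ of $f^*\Delta_2$ satisfy $b_i<1$ for all $i$ (equivalently $0\le\lfloor f^*\Delta_2\rfloor$ componentwise is zero), and moreover $b_i<\lceil a_i\rceil-a_i$ for each $i$ with $a_i\notin\Z$. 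Since the $E_i$ are finitely many and $b_i\to 0$ as $\Delta_2\to 0$ in the relevant sense, such $\Delta_2$ exist; one can phrase this intrinsically on $X$ by saying that for any given $\Delta_1$ there is an $\varepsilon>0$ such that any effective $\Q$-Cartier $\Delta_2$ with $\Delta_2\le\varepsilon\,\partial X$ (as divisors, $\partial X$ with reduced structure made Cartier after passing to a multiple, or simply the condition on pullback coefficients) works. Granting the coefficient inequalities, the equality of the round-up divisors is immediate, hence so is the equality of the twisted motivic Chern classes, since the remaining factor $\mC(Y^o\subset Y)$ and the pushforward $f_*$ are identical on both sides.

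For the displayed special case, take $\Delta_1=0$: then $\lceil f^*\Delta_1\rceil=0$, so $\O_Y(\lceil f^*\Delta_1\rceil)=\O_Y$, and the formula of Definition \ref{def-twisted} gives $\mC(X,\partial X;0)=f_*\big(\mC(Y^o\subset Y)\big)$. On the other hand, by functoriality of the motivic Chern class under the proper birational map $f$ which is an isomorphism over $X^o$ (this is exactly the content of formula \eqref{def:mc}), $f_*\big(\mC(Y^o\subset Y)\big)=\mC(X^o\subset X)$. Applying the main statement with $\Delta_1=0$ and $\Delta_2$ small then yields $\mC(X,\partial X;-\Delta_2)=\mC(X^o\subset X)$, as asserted.

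I expect the only real obstacle to be making the notion "$\Delta_2$ small enough" canonical and resolution-independent: a priori the admissible bound on the coefficients $b_i$ depends on the chosen resolution $f$, so I would either (i) state the proposition with the quantifier "there exists $\Delta_2\neq 0$ effective such that \dots" and note monotonicity (if it holds for $\Delta_2$ it holds for any $0\le\Delta_2'\le\Delta_2$), or (ii) observe that the condition "$\lceil f^*(\Delta_1-\Delta_2)\rceil=\lceil f^*\Delta_1\rceil$" is actually independent of $f$ among log-resolutions dominating a fixed one, because both sides are pulled back compatibly, so the class of admissible $\Delta_2$ is intrinsic. Everything else — the reduction to a single resolution, the pullback bookkeeping, the projection/pushforward steps — is routine given the independence-of-resolution result and the functoriality formula \eqref{def:mc}.
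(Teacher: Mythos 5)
Your proof follows the same route as the paper's: fix one log-resolution $f$ (legitimate, since the class is resolution-independent), note that $f^*\Delta_2$ is effective because $f$ is surjective and birational, and observe that for small enough coefficients the round-up is unchanged, $\lceil f^*\Delta_1-f^*\Delta_2\rceil=\lceil f^*\Delta_1\rceil$; the paper compresses this into the phrase ``semi-continuity of the ceiling function,'' and handles the special case $\Delta_1=0$ exactly as you do, via $f_*\mC(Y^o\subset Y)=\mC(X^o\subset X)$.

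One concrete slip in your attempt to make ``small enough'' explicit: for $a\notin\Z$ the condition $0\le b<\lceil a\rceil-a$ is \emph{not} sufficient for $\lceil a-b\rceil=\lceil a\rceil$. The correct threshold is $b<1-(\lceil a\rceil-a)=a-\lfloor a\rfloor$ (and $b<1$ when $a\in\Z$), since $\lceil a-b\rceil=\lceil a\rceil$ exactly when $a-b>\lceil a\rceil-1$. For instance $a=1.1$, $b=0.5$ satisfies your inequality ($0.5<0.9$) yet $\lceil a-b\rceil=1\neq 2=\lceil a\rceil$. This does not damage the argument — the proposition only asserts the equality for $\Delta_2$ small enough, and your fallback remark that finitely many components allow a uniform bound is what is actually needed — but the displayed threshold should be replaced by the fractional part. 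Your closing concern about resolution-dependence of the bound is also fine as you resolve it: one resolution suffices because the class itself is resolution-independent, which is all the paper uses.
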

\begin{proof}
	The resolution of singularities $f$ is surjective and birational, therefore the divisor $f^*\Delta_2$ is effective. If the coefficients of $\Delta_2$ are small enough then
	$$\lceil f^*\Delta_1-f^*\Delta_2\rceil=
	\lceil f^*\Delta_1\rceil $$
	due to semi-continuity of the ceiling function.
\end{proof}
}

\begin{remark}[About relation with multiplier ideals, {\cite{BlLa, La}}]\rm
The construction of the twisted motivic Chern class has a lot in common with the definition of multiplier ideals \cite[Def 9.2.1]{La}. {Let $X$ be a smooth variety of dimension $n$, $\Delta$  an effective $\Q$-divisor and $\partial X$ any subvariety such that
$|K_X+ \Delta|\subset \partial X$. Then the coefficient of $y^n$ of the class
\begin{align*}
&(-1)^n\mC(X,\partial X;-K_X-\Delta)
\end{align*}
is equal to the K-theory class of the multiplier ideal for $\Delta$.} \end{remark}

In our consideration the singular variety $X$ plays no role, and we can work with a system of compatible smooth varieties with SNC $\Q$-divisors. Instead of a Cartier divisor $\Delta$ we may consider a fractional power of an ideal, as in the definition of the multiplier ideal associated with a formal power of an ideal, see  \cite[Def 9.2.3]{La}.
We will study these classes elsewhere.

\section{Motivation: Twisted motivic Chern class as a limit of elliptic classes}
\subsection{The elliptic class}
The elliptic class of a singular pair was defined by Borisov and Libgober \cite{BoLi0}. The torus equivariant version, when there is only a finite number of fixed points was considered in \cite[\S2.4, formula 8]{RW}, \cite{MW}. In that case, and additionally when $X$ admits a resolution with finitely many fixed points, the elliptic class can be computed by quite a compact formula. The formulas below define a class in the equivariant K-theory (extended by formal parameters $q$ and $h$). Applying the Riemann-Roch transformation we obtain an element in equivariant cohomology, which was studied in \cite{BoLi0,BoLi}.
\medskip

First let us recall the basic definitions. We follow \cite{RW, MW}.
Let $q=e^{2\pi i\tau}$ belong to the unit disk in $\C$, or consider $q$  as a formal parameter.
The multiplicative version of the Jacobi theta function is defined by the convergent (or formal, if $q$ is treated as a formal variable) infinite product
$$\vt(x)=(x^{1/2}-x^{-1/2})\prod_{n={1}}^\infty (1-q^nx)(1-q^n/x)\,.$$
Let $$\delta(x,y)=\frac{\vt'(1)\vt(xy)}{\vt(x)\vt(y)}=\frac{1-(xy)^{-1}}{(1-x^{-1})(1-y^{-1})}+\sum_{n=1}^\infty q^n\sum_{k|n}(x^{-k}y^{-n/k}-x^ky^{n/k})$$
$$=\frac{1-(xy)^{-1}}{(1-x^{-1})(1-y^{-1})}+q(x^{-1}y^{-1}-xy)+\mathcal O(q^2)$$
be the
function defined in \cite[Sec.~3, p.~456]{Za91}, \cite{RTV}, \cite[\S2.1]{RW}. The following properties of $\delta$ are fundamental:
\begin{equation}
\delta(x,y)=\delta(y,x),\qquad \delta(x^{-1},y^{-1})=-\delta(x,y),
\end{equation}
\begin{equation}\label{period}
\delta(x,q^{-1} y)=x\,\delta(x,y)\,.
\end{equation}

The elliptic class of the pair $(X,\Delta)$ is equal to the push-forward of the elliptic class of $(Y,D)$, where $f:Y\to X$ is a resolution of $X$ and the divisor $D$ satisfies $$K_Y+D=f^*(K_X+\Delta)\,.$$ To make sense of the pull-back we have to assume that  $K_X+\Delta$ is $\Q$-Cartier. For a smooth variety with a SNC divisor the original definition of \cite{BoLi0} is quite complicated, but if we assume that $\T$ has a finite number of fixed points on $Y$, then the following formula is efficient. Suppose that $p\in Y$ is a fixed point and that the divisor $D$ is given by sum of the coordinate hyperplanes in some local coordinates. Assume that $\T$ acts with the weight $w_k$ on $k$-th coordinate and
assume that the multiplicity of $D$ in that direction is equal $a_k$. {It follows from \cite[section 2.3]{RW} that}
\begin{equation}\label{def-ell}\Ell(Y,D)_{|p}=eu(T_pY)\prod_{k=1}^{\dim Y}\delta(t^{w_k},h^{1-a_k})\,.\end{equation}
Here
$$eu(T_pY)=\prod_{k=1}^{\dim Y}1-t^{-w_k}$$
is the K-theoretic Euler class, $h$ is a formal variable and we allow fractional powers of $h$. The expression above makes sense when $a_k\neq 1$, otherwise we have a zero in the denominator of $\delta$, since $\vt(h^0)=\vt(1)=0$. To show that the definition does not depend on the resolution we have to assume that $a_k<1$. In other words we assume that the pair $(X,\Delta)$ has at worst Kawamata log-terminal singularities.

\subsection{Limit properties of $\delta$}
We are interested in the limits of the elliptic class when $q\to 0$, or equivalently $\tau \to i\infty$. We have an obvious way of passing to the limit
\begin{equation}
\lim_{q\to0} \delta(x,y)=\frac{1-(xy)^{-1}}{(1-x^{-1})(1-y^{-1})},
\end{equation}
Then according to \cite[Prop 3.9]{BoLi0}
$$\lim_{\tau \to i\infty}\int_X\Ell(X,D)=c\, E_{st}(X,D)[h^{-1},1]\,,$$
where $E_{st}(X,D)\in \Z[u,v]$ is the stringy Hodge polynomial defined by Batyrev, the normalizing factor is equal  to $$c=(h^{1/2}-h^{-1/2})^{-\dim X}.$$
We propose to call the limit $\lim_{\tau \to i\infty}\Ell(X,D)$  the stringy motivic Chern class and denote it by $\mC_{st}(X,D)$. Applying the Riemann-Roch transformation we obtain the stringy Hirzebruch class mentioned in \cite[formula 3.9]{BSY} or \cite[\S11.4]{SY}.
 In the case of isolated fixed points the stringy motivic Chern class is obtained  as $\Ell(X,D)$, but instead of $\delta(t,h^{1-a_k})$ in the formula \eqref{def-ell} we use the factors
$$\lim_{q\to 0}\delta(t^{w_k},h^{1-a_k})= \frac{1-t^{-w_k}h^{a_k-1}}{(1-t^{-w_k})(1-h^{a_k-1})}\,.$$
We will not study these classes in the present paper.
\subsection{A smarter way of passing to the limit}

By elementary analysis argument and using the periodicity property of $\delta$ we have
\begin{equation}
\lim_{\tau \to i\infty} \delta(x,q^{-\lambda} y)=\begin{cases}\frac{y^{\lambda}(1-(xy)^{-1})}{(1-x^{-1})(1-y^{-1})}&\text{ if }\lambda\in \Z\\ \\
\frac{x^{\lfloor \lambda\rfloor}}{1-x^{-1}}&\text{ if }{\lambda \not\in \Z.}\end{cases}
\end{equation}
see \cite[proof of Prop. 3.13]{BoLi}, \cite[\S2.1]{RTV}.
Here to make sense of $q^{-\lambda}$ we set $q^{-\lambda}:=e^{-2\pi i \tau \lambda}$, and instead $q\to 0$ we take $\tau\to i\infty$.
\medskip

The elliptic class is  defined when the coefficients $a_k$ are smaller than one.
Let us consider the coefficients of the form $a_k=1-\varepsilon d_k$. Then the factors in the expression \eqref{def-ell} are equal to
$$\delta(t^{w_k},h^{1-a_k})=\delta(t^{w_k},h^{\varepsilon d_k})\,.$$
Let us formally substitute $h^{-\varepsilon}$ by $q$. Then we obtain the factors
$$\delta(t^{w_k},q^{-d_k})$$ and (assuming that the coefficients $d_k$ are not integral)
\begin{equation}
\lim_{\tau \to i\infty}\delta(t^{w_k},h^{1-a_k})=
\lim_{\tau \to i\infty} \delta(t^{w_k},q^{-d_k} )=
\frac{t^{\lfloor d_k\rfloor w_k}}{1-t^{-w_k}}
\,.
\end{equation}
The localized motivic Chern class is of the product form
$$\mC(Y\setminus |D|\to Y)_{|{p}}={eu(T_{p}Y)}\cdot\prod_{k=1}^m (y+1)t^{-w_k}\cdot \prod_{k=m+1}^{\dim Y}( 1+yt^{-w_k})\,,$$
provided that the divisor $D=\sum_{k=1}^m d_k D_k$, $d_k\neq 0$ and $c_1(\O(D_k))_{|p}=t^{w_k}$.
The localized twisted class (see Definition \ref{def-twisted}) is of the form
$$\mC(Y;D)_{|{p}}={eu(T_{p}Y)}\cdot\prod_{k=1}^m t^{\lceil d_k\rceil w_k}\cdot\prod_{k=1}^m (y+1)t^{-w_k}\cdot \prod_{k=m+1}^{\dim Y}( 1+yt^{-w_k})\,.$$
Since for non-integral coefficients $d_k$ we have $\lceil d_k\rceil-1=\lfloor d_k\rfloor$ the class
$\mC(Y;D)$ coincides with the refined limit of the elliptic class multiplied by $(1+y)^{\dim Y}$. Hence
\begin{equation*}(1+y)^{\dim Y}\lim_{q\to0}\Ell\left(Y,\sum\nolimits_k(1-\varepsilon d_k) D_k\right)_{h^{-\varepsilon}:=q}=\mC\left(Y;\sum\nolimits_k d_k D_k\right)\,.\end{equation*}
\medskip
Suppose $(X,\Delta_0)$ is a pair admitting a resolution $f:Y\to X$ such that $D=f^*(K_X+\Delta_0)-K_Y$ has the coefficients equal to one. Then the limit
\begin{equation*}\lim_{q\to0}\Ell\left(X,\Delta_0+\varepsilon \Delta\right)_{h^{-\varepsilon}:=q}\end{equation*}
exists and
\begin{equation}\label{ell2cm}(1+y)^{\dim X}\lim_{q\to0}\Ell\left(X,\Delta_0+\varepsilon \Delta\right)_{h^{-\varepsilon}:=q}=\mC\left(X;\Delta\right)\,.\end{equation}
Such a situation happens for Schubert varieties in the homogeneous spaces, see \cite{RW,KRW}.\medskip

Note that the elliptic class is defined only for a special type of pairs (Kawamata log-terminal pairs), while for the twisted motivic Chern class there is no such restriction. We demand only that the divisor $\Delta$ is $\Q$-Cartier.
\medskip

We conclude with the remark, that the formula \eqref{ell2cm} remains true for not necessarily isolated fixed points. Only the argument (and the definition) would be considerably more involving.

\section{Independence from the resolution}
\label{sec:niez}
To prove that the class given in the Definition \ref{def-twisted} does not depend on the resolution we apply the Weak Factorization Theorem \cite{AKMW}, \cite[Th.~0.0.1]{Wlodek}. It is enough to consider the single blow-up in a center $C\subset \partial Y$ which has normal intersections with components of the divisor $\partial Y=\bigcup_{k=1}^m D_k$.
Let $Z$ be a blow-up of $Y$ in such center:
$$Z=Bl_CY\stackrel{b}\longrightarrow Y\stackrel{f}\longrightarrow X\,.$$
 Suppose $C$ is contained in the intersection of the boundary components $\bigcap_{k=1}^r D_k$ and is not contained in any divisor $D_i$ for $i>r$. Let $\partial Z=b^{-1}(\partial Y)$, $Z^o=Z\setminus\partial Z$ and $D=f^*(\Delta)$.
Suppose $D=\sum_{k=1}^m c_k D_k$.
The multiplicity of $E$ in $b^*(D)$ is equal to $\sum_{k=1}^r c_k$.
The difference{$$b^*(\lceil D\rceil)-\lceil b^*(D)\rceil=\left(\sum_{k=1}^r \lceil c_k\rceil-\left\lceil\sum_{k=1}^r c_k\right\rceil\right)E=sE\,,$$}where $E$ is the exceptional divisor of the blowup $b$ and $s$ is a nonnegative integer smaller than $r$.
We need to show  that the push-forward
$$b_*\Big(\O_{Z}(\lceil b^*(D)\rceil) \cdot \mC(Z^o\subset Z)\Big)$$
is equal to
$$\O_{Y}(\lceil D\rceil) \cdot \mC(Y^o\subset Y).
$$
By the projection formula it is enough to prove that
$$b_*\Big(\O_{Z}({-s}E) \cdot \mC(Z^o\subset Z)\Big)
=
\mC(Y^o\subset Y)$$
for $s\in[0,r-1]$.

\subsection{Comparison of twists}
Our calculus is based on the following formula:
Let $Z$ be a smooth variety and $E$ a smooth divisor. Let
$I_E=\O_Z(-E)\subset \O_Z$ be the ideal sheaf of $E$. Let $i:E\to Z$ denote the inclusion.
The exactness of the sequence on $Z$
$$0\to \O_Z(-E)\to \O_Z\to i_*(\O_E)\to 0$$
implies the equality
\begin{equation}\label{roznica}\alpha- \O_Z(-E)\cdot\alpha=i_*(\O_E) \cdot \alpha \in \K(Z)\end{equation}
for any $\alpha\in \K(Z)$.

\subsection{Adjunction exact sequence}
Let $L= i^*\O_Z(-E)$.
The following sequence of sheaves on $E$ is exact
$$0\to\Omega^{p-1}_E\otimes L \to i^*\Omega^p_Z\to \Omega^p_E\to 0\,.$$
Here the second map is induced by the differential
$$d:I_E\to \Omega^1_Z\,,$$
and the third is the restriction of forms.
Therefore in the K-theory we have an equality
$$i^* \Omega^p_Z=\Omega^p_E+ L\cdot {\Omega^{p-1}_E}\,,$$
or
$$i^* \mC(Z)=(1+y\,L)\cdot \mC(E)\,.$$
{By the proper functoriality of $\mC$ classes we have}
\begin{equation}\label{mc_hiperpowierzchni}i^*\mC(E\subset Z)=i^*i_*\mC(E)=(1-L)\cdot \mC(E)\,.\end{equation}
{Moreover, by the additivity of $\mC$ classes we have}
\begin{align}\notag i^*\mC(Z\setminus E\subset Z)&={\mC(Z)-\mC(E\subset Z)}\\
\notag &=(1+yL)\cdot \mC(E)-(1-L)\cdot \mC(E)\\
&=(1+y)\,L\cdot \mC(E)\label{obciecie}
\,.\end{align}

\subsection{Blow-up of a SNC divisor}
Let $Y$ be a smooth variety and let $\partial Y=\bigcup_{k=1}^m {D_k}$ be a SNC divisor. Suppose $C\subset \bigcap_{k=1}^r {D_k}$ for $r\leq m$ and $C\cap \partial Y_k=\emptyset$ for $k>r$. Let $b:Z\to Y$ be the blow-up of $C$ and let $E$ be the exceptional divisor. Let $i:E\hookrightarrow Z$ and $i':C\hookrightarrow Y$ be the inclusions.
Denote by $\partial Z_k$ the proper transform of ${D_k}$ and set $\D=\bigcup_{k=1}^m \partial Z_k$.
Let $$ {Y^o=Y\setminus\partial Y\,,}\qquad\partial Z=b^{-1}(\partial Y)=\D\cup E\,,\qquad Z^o=Z\setminus\partial Z=b^{-1}Y^o\,. $$ By the functorial property of the motivic Chern class
$$\mC(Y^o\subset Y)=b_*\left(\mC(Z^o\subset Z)\right)\in \K(Y)\,.$$
In our situation the line bundle $L=i^*\O_Z(-E)$ is the relative $\O_{E/C}(1)$ for the projective  bundle $b_{|E}:E\to C$.

\begin{lemma}\label{znik}
With the above notation
$$b_*\left(\O_Z(-sE)\cdot\mC(Z^o\subset Z) \right)=\mC(Y^o\subset Y)$$
for $0\leq s\leq r-1$.
\end{lemma}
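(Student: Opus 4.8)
The plan is to reduce everything to a vanishing on the exceptional divisor and to prove that vanishing on the projective bundle $\pi=b_{|E}\colon E\to C$ by a Bott‑type argument. By the projection formula it suffices to show that $P(s):=b_*\!\big(\O_Z(-sE)\cdot\mC(Z^o\subset Z)\big)$ does not depend on $s$ for $0\le s\le r-1$; the case $s=0$ is the functoriality $b_*\mC(Z^o\subset Z)=\mC(Y^o\subset Y)$ recalled above, so I would induct on $s$. For $1\le s\le r-1$, apply the exact sequence \eqref{roznica} with $\alpha=\O_Z(-(s-1)E)\cdot\mC(Z^o\subset Z)$ and then the projection formula for $i\colon E\hookrightarrow Z$, using $i^*\O_Z(-E)=L$:
$$P(s-1)-P(s)=b_*\!\big(i_*(\O_E)\cdot\alpha\big)=b_*i_*\!\big(L^{\,s-1}\cdot i^*\mC(Z^o\subset Z)\big).$$
As $b\circ i=i'\circ\pi$ with $i'\colon C\hookrightarrow Y$, this equals $i'_*\pi_*\!\big(L^{\,s-1}\cdot i^*\mC(Z^o\subset Z)\big)$, so the lemma will follow once I show $\pi_*\!\big(L^{\,j}\cdot i^*\mC(Z^o\subset Z)\big)=0$ in $\K(C)$ for $0\le j\le r-2$.

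The next step is the identity $i^*\mC(Z^o\subset Z)=(1+y)\,L\cdot\mC(E^o\subset E)$, where $E^o:=E\setminus\bigcup_{k=1}^r(\partial Z_k\cap E)$ is the complement in $E$ of the SNC divisor whose components are the sub‑bundles $G_k:=\partial Z_k\cap E=\P(N_{C/D_k})\subset\P(N_{C/Y})=E$. This refines \eqref{obciecie} to account for the proper transforms $\partial Z_k$ through $C$; I would prove it from the product formula $\mC(Z^o\subset Z)=\mC(Z)\cdot\prod_F\frac{(1+y)\O_Z(-F)}{1+y\,\O_Z(-F)}$ (product over the boundary components $F$ of $\partial Z$, a consequence of the iterated residue sequences for $\Omega^1_Z(\log\partial Z)$) by restricting to $E$, using $i^*\mC(Z)=(1+yL)\cdot\mC(E)$ from the conormal sequence, noting that components disjoint from $E$ contribute the factor $1$ and that $1+yL$ cancels the denominator of the $E$‑term. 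Substituting, $P(s-1)-P(s)=(1+y)\,i'_*\pi_*\!\big(L^{\,s}\cdot\mC(E^o\subset E)\big)$, so it remains to prove $\pi_*\!\big(L^{\,s}\cdot\mC(E^o\subset E)\big)=0$ for $1\le s\le r-1$.

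For the vanishing, expand by inclusion–exclusion $\mC(E^o\subset E)=\sum_{J\subseteq\{1,\dots,r\}}(-1)^{|J|}(\iota_J)_*\mC(G_J)$, with $G_J=\P(N_{C/D_J})$ and $\pi_J:=\pi\circ\iota_J\colon G_J\to C$ a $\P^{\,c-1-|J|}$‑bundle, $c=\codim(C,Y)$; setting $L_J:=\iota_J^*L=\O_{G_J/C}(1)$, the projection formula together with the relative Euler sequence gives
$$\pi_*\!\big(L^{\,s}\cdot\mC(E^o\subset E)\big)=\frac{\lambda_y(\Omega^1_C)}{1+y}\sum_{J}(-1)^{|J|}(\pi_J)_*\!\Big(L_J^{\,s}\cdot\lambda_y\big(\pi_J^*N_{C/D_J}^*\otimes L_J^{-1}\big)\Big).$$
Since $R(\pi_J)_*\O(k)=0$ for $1-(c-|J|)\le k\le -1$, and $s\ge1$ makes only exterior powers of index $\le s$ survive (so the twists $L_J^{\,s-p}$ occurring all have nonnegative degree), each inner summand equals — in terms of the Chern roots $c_i$ of $N_{C/D_J}^*$ — the coefficient of $z^{s}$ in $\prod_i\frac{1+yz\,c_i}{1-z\,c_i}$. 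Because $[N_{C/D_J}^*]=[N_{C/Y}^*]-\sum_{k\in J}[\ell_k^*]$ with $\ell_k:=N_{D_k/Y}|_C$, that product equals $\Phi\cdot\prod_{k\in J}\frac{1-z\ell_k^*}{1+yz\ell_k^*}$, where $\Phi:=\prod_i\frac{1+yz\,c_i}{1-z\,c_i}$ runs over the Chern roots of $N_{C/Y}^*$; summing the signed terms over $J\subseteq\{1,\dots,r\}$ yields $\Phi\cdot\prod_{k=1}^r\!\big(1-\tfrac{1-z\ell_k^*}{1+yz\ell_k^*}\big)=\Phi\cdot(1+y)^r z^{r}\prod_{k=1}^r\frac{\ell_k^*}{1+yz\ell_k^*}$, which is divisible by $z^{r}$. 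Hence the coefficient of $z^{s}$ vanishes for every $s<r$, and in particular for $1\le s\le r-1$; the induction then gives $P(s)=\mC(Y^o\subset Y)$ for $0\le s\le r-1$.

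I expect the main obstacles to be, first, the identity $i^*\mC(Z^o\subset Z)=(1+y)L\cdot\mC(E^o\subset E)$ — one must carefully incorporate the proper transforms $\partial Z_k$ meeting $E$, not just the exceptional divisor as in \eqref{obciecie} — and second, the projective‑bundle computation: pinning down the exact Bott vanishing range, justifying the splitting‑principle manipulation, and verifying that no negative‑degree direct images intervene in the range $s\le r-1$. The case $c>r$ (extra normal directions of $C$) causes no trouble, since those directions only contribute the invertible factor $\Phi$. All the operations above are equivariant, so the same argument works in $\T$‑equivariant $K$‑theory.
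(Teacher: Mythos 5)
Your argument is correct, and its skeleton is exactly the paper's reduction: the telescoping in $s$ via the exact sequence \eqref{roznica}, the projection formula along the blow-up square $b\circ i=i'\circ g$, and the restriction identity $i^*\mC(Z^o\subset Z)=(1+y)\,L\cdot\mC(E\setminus\D\subset E)$ (which the paper gets from \eqref{obciecie} plus inclusion--exclusion, and you get from the logarithmic-forms product formula -- equivalent routes). Where you genuinely diverge is the key vanishing on the exceptional divisor. The paper isolates it as a separate lemma on a projective bundle $g\colon\P(V)\to C$ with codimension-one subbundles in general position: via the relative Euler sequence it derives the closed formula $\mC(E\setminus B\subset E)=g^*\mC(C)\,(1+y)^{r-1}\prod_k L_k(-1)\cdot\lambda_y(K(-1))$, and then, locally on $C$ after trivializing $L_k$ and $K$, reduces to the classical sheaf-level vanishing $R^pg_*\O_{E/C}(-j)=0$ for $0<j<\rk V$, with no splitting principle. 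You instead expand $\mC(E^o\subset E)$ by additivity over the closed strata $G_J=\P(N_{C/D_J})$ and evaluate each pushforward as the coefficient of $z^s$ in a $\lambda_y$/symmetric-power generating series in K-theoretic Chern roots, so that the signed sum over $J$ collapses to $\Phi\cdot(1+y)^r z^r\prod_k \ell_k^*/(1+yz\ell_k^*)$ and the vanishing for $1\le s\le r-1$ is read off from divisibility by $z^r$. Both proofs are valid: the paper's buys a clean closed form for $\mC(E\setminus B\subset E)$ and a short sheaf-theoretic vanishing, while yours stays global on $C$ (no local trivialization), makes the sharpness of the bound $s\le r-1$ transparent, and uses only additivity plus Bott-type acyclicity. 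Two points worth making explicit in a write-up: the quotient $\prod_F (1+y)\O_Z(-F)/(1+y\,\O_Z(-F))$ must be interpreted after clearing denominators (or in $K(Z)[[y]]$), and your coefficient-of-$z^s$ identification uses $s\ge1$ so that every twist $L_J^{\,s-p}$ with $p\le\rk N_{C/D_J}$ lies in the acyclic range -- which is precisely why $s=0$ has to be handled separately by functoriality, as you do.
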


\begin{proof}

We apply the formula \eqref{roznica}  for $\alpha=\O_Z((-s+1)E)\cdot\mC(Z^o\subset Z) $
and see that the difference between
\begin{equation}\O_Z((-s+1)E)\cdot\mC(Z^o\subset Z)\label{mcroznica1}
\end{equation}
and
\begin{equation}\O_Z(-sE)\cdot\mC(Z^o\subset Z)
\label{mcroznica2}
\end{equation}
is equal to
\begin{align*}
i_*\O_E\cdot \O_Z((-s+1)E)\cdot\mC(Z^o\subset Z)&=
 i_*i^*\left(\O_Z((-s+1)E)\cdot\mC(Z^o\subset Z)\right)\\
&=i_*\left(L^{s-1}\cdot i^*\mC(Z^o\subset Z)\right)\\
&=i_*\left((1+y)L^{s}\cdot \mC(E\setminus \D\subset E)\right)\,.
\end{align*}
The last equality follows from \eqref{obciecie} and the inclusion-exclusion formula for SNC divisors. Let us give names to the maps appearing in the blow-up diagram
$$\begin{matrix}E&\stackrel{i}\longrightarrow&Z\\
^g\downarrow&&\downarrow^b\\
C&\stackrel{i'}\longrightarrow&Y
\end{matrix}$$
The push forward $b_*$ of the difference \eqref{mcroznica1}--\eqref{mcroznica2} is equal to
$${(1+y)\cdot}i'_*g_*\left(L^{s}\cdot \mC(E\setminus \D\subset E)\right)$$
The conclusion follows from the lemma below.\end{proof}

\begin{lemma} Let $V\to C$ be a vector bundle and $\{V_k\}_{k=1}^r$ a collection of codimension one subbundles $V_k\subset V$. We assume that {$r\le \rank V$ and} the subbundles $V_k$ are in general position at each point of $C$. Consider the projective bundle $g\colon E=\P(V)\to C$ and the divisors $B_k=\P(V_k)$, $B=\bigcup_{k=1}^r B_k$. Then
$$g_*\left(\O_{E/C}(s)\cdot \mC(E\setminus B\subset E)\right)=0\qquad \text{for }s=1,2,\dots, r-1\,.$$
\end{lemma}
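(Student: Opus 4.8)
The plan is to reduce the statement to a clean cohomology vanishing on a projective bundle via the explicit formula for the motivic Chern class of the complement of a SNC divisor. First I would observe that $E = \P(V)$ and $B = \bigcup_{k=1}^r B_k$ with $B_k = \P(V_k)$ form a SNC divisor (the $V_k$ are in general position, so the $B_k$ meet transversally), and that by the inclusion--exclusion formula \eqref{def:mc} together with \eqref{obciecie}, the class $\mC(E\setminus B\subset E)$ can be written along the lines of $\prod_{k=1}^r (1+y)L_k' \cdot (\text{something})$; more usefully, I would use the closed-form expression $\mC(E\setminus B\subset E) = \O_E(-B)\otimes\lambda_y\Omega^1_E(\log B)$ quoted from \cite{MS}. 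The key point is that $\O_E(-B) = \O_{E/C}(-r)\otimes g^*\det(\text{stuff})$, so that twisting by $\O_{E/C}(s)$ for $s\le r-1$ still leaves us with $\O_{E/C}(s-r)$ tensored with pullbacks from $C$ and with $\lambda_y\Omega^1_E(\log B)$.

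Next I would analyze $g_*$ of such a class. The relative logarithmic cotangent sheaf sits in an exact sequence $0\to g^*\Omega^1_C\to \Omega^1_E(\log B)\to \Omega^1_{E/C}(\log B)\to 0$, and the relative part $\Omega^1_{E/C}(\log B)$ for the projective bundle $\P(V)$ with the $r$ hyperplane sub-bundle divisors is itself built (via the relative Euler sequence twisted by the $B_k$) out of $\O_{E/C}$ and $g^*$-pullbacks, with all the nontrivial relative twisting concentrated in the term $\O_{E/C}(-r)$ coming from $\O_E(-B)$. So after expanding $\lambda_y$ and the projection formula, computing $g_*$ reduces to evaluating $g_*\big(\O_{E/C}(j)\big)$ for $j$ in the range $s-r \le j \le s$, i.e. $j\in[s-r, s]\subset[-(r-1)-r,\,r-1]$—wait, more carefully, each factor of $\lambda_y\Omega^1_{E/C}(\log B)$ contributes at most degree $0$ relatively except the overall $\O_{E/C}(-r)$, so the relevant degrees are $j = s-r, s-r+1,\dots$ up to at most $s$. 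The crucial arithmetic fact is that for a rank-$N$ bundle $V$ on $C$ with $N=\rank V\ge r$, one has $R^\bullet g_* \O_{\P(V)}(j) = 0$ (all derived pushforwards vanish) precisely when $-N \le j \le -1$; and since $s\le r-1\le N-1$ while $s - r \ge -(r-1)-r \ge -N$ is \emph{not} automatically in range, I must be more careful and track that the genuinely nontrivial relative twist in each monomial of $\O_{E/C}(s)\cdot\O_E(-B)\otimes\lambda_y\Omega^1_E(\log B)$ lies in degrees $j$ with $s-r\le j\le s-1 < r-1 < N$ — the upper bound $j\le -1$ is what forces $s\le r-1$, and the lower bound $j\ge -N$ is where the hypothesis $r\le\rank V$ enters. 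Hence every term pushes forward to zero.

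The main obstacle will be the bookkeeping in the previous step: correctly identifying which powers of $\O_{E/C}(1)$ actually occur when one fully expands $\O_{E/C}(s)\otimes \O_E(-B)\otimes\lambda_y\Omega^1_E(\log B)$, and checking that \emph{all} of them land in the window $[-N,-1]$ where $Rg_*$ kills the line bundle. Concretely, I would set up the relative logarithmic Euler sequence $0\to \Omega^1_{E/C}(\log B)\to \big(g^*V/\!\!\sim\big)\otimes\O_{E/C}(-1)\to \O_E\to 0$ adapted to the sub-bundles (the middle term being a sum of $g^*$-twists of $\O_{E/C}(-1)$ and, for the $B_k$-directions, $\O_{E/C}(0)$), deduce that $\O_E(-B)\otimes\lambda_y\Omega^1_{E/C}(\log B)$ is a $\Z[y]$-combination of $\O_{E/C}(j)\otimes g^*(\text{bundles})$ with $-N\le j\le -1$ — here the top twist is $\O_{E/C}(-r)$ from $\O_E(-B)$ and adding back at most $r$ factors of $\O_{E/C}(1)$ from $\lambda_y$ of the Euler-sequence middle term gives $j$ between $-r$ and $-1$, then tensoring with the $\le r-1$-th power brings it to $j\in[-r,\,r-1]$—no: the point is that $\O_{E/C}(s)$ for $s\le r-1$ raises the top from $-r$ to $s-r\le -1$ and the bottom stays $\le -1$, and $-r\ge -N$, so indeed $j\in[-N,-1]$ throughout. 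Then $g_*$ and all higher $R^ig_*$ annihilate each summand, giving $g_*\big(\O_{E/C}(s)\cdot\mC(E\setminus B\subset E)\big)=0$ in $\K(C)$, which is the claim. I would close by remarking that the same computation in fact shows the relative $\mC$-class $g_*\mC(E\setminus B\to E)$ is independent of the chosen configuration, reproving along the way the projective-bundle case of resolution-independence used in Lemma \ref{znik}.
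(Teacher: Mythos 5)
Your route is essentially the paper's: the paper likewise reduces the claim, via the relative Euler sequence, to the vanishing of all derived pushforwards $R^pg_*\O_{E/C}(j)$ for $j$ in the "dead zone" of the projective bundle. The only real difference is packaging: you write $\mC(E\setminus B\subset E)=\O_E(-B)\otimes\lambda_y\Omega^1_E(\log B)$ and invoke a relative logarithmic Euler sequence, whereas the paper expands $\lambda_y(g^*V^*(-1))=\prod_{k=1}^r(1+yL_k(-1))\cdot\lambda_y(K(-1))$ with $L_k=g^*\ker(V^*\to V_k^*)$, $K=g^*(V^*/\sum_k L_k)$, and uses additivity over the strata $B_I$, arriving at the same identity $\mC(E\setminus B\subset E)=g^*\mC(C)\,(1+y)^{r-1}\prod_k L_k(-1)\,\lambda_y(K(-1))$. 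So the two arguments are the same reduction in different clothes.

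However, your key vanishing statement is off by one at the bottom endpoint, and the degree bookkeeping in the middle of your plan is not right as written. For $N=\rank V$ the fibres are $\P^{N-1}$, and $R^\bullet g_*\O_{E/C}(j)=0$ holds precisely for $-(N-1)\le j\le -1$, not for $-N\le j\le -1$: at $j=-N$ one has $R^{N-1}g_*\O_{E/C}(-N)\neq 0$ (relative Serre duality). Moreover the relative twists that occur are not "up to $s$", and no factors of $\O_{E/C}(1)$ get "added back": $\O_E(-B)$ contributes $\O_{E/C}(-r)$, the $r$ logarithmic directions contribute twist $0$, and $\lambda_y(K(-1))$ contributes twists between $-(N-r)$ and $0$, so after tensoring with $\O_{E/C}(s)$ the degrees are exactly $s-r'$ with $r\le r'\le N$, i.e. they lie in $[s-N,\,s-r]\subseteq[-(N-1),-1]$; the upper bound uses $s\le r-1$ and the lower bound uses $s\ge 1$ (the hypothesis $r\le\rank V$ is what makes the general-position setup and the bundle $K$ possible, rather than what saves the lower bound). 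Since the offending degree $-N$ never actually appears, your argument does close once the vanishing window is stated correctly, and it then coincides with the paper's proof.
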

\begin{proof} Let $\Omega^1_{E/C}$ be the dual of the tangent bundle of the fibers of $g$. We apply the Euler sequence
$$0\to\Omega^1_{E/C}\to g^*V^*(-1)\to\O_E\to 0\,.$$
Here the twist $(-1)$ denotes the tensor product with the relative $\O_{E/C}(-1)$ bundle, for example $g^*V^*(-1)=g^*V^*\otimes \O_{E/C}(-1)$.
The motivic Chern class of $E$ can be presented as
$$\mC(E)=g^*(\mC(C))\cdot \lambda_y(\Omega^1_{E/C})=g^*(\mC(C))\cdot \lambda_y(g^*V^*(-1))/(1+y)\,.$$
{See Verdier-Riemann-Roch formula \cite[Theorem 4.2 (4)]{AMSS} for the first equality}. Similarly
$$\mC(B_k)=g^*(\mC(C))\cdot \lambda_y(g^*V_k^*(-1))/(1+y)$$
and
$$\mC(B_k\subset E)=(1-L_k(-1))\cdot g^*(\mC(C))\cdot \lambda_y(g^*V_k^*(-1))/(1+y)$$
since the ideal of $B_k$ in $E$, is isomorphic to $L_k^{-1}(1)$.
The factor $\lambda_y(g^*V^*(-1))$ decomposes
$$\lambda_y(g^*V^*(-1))=\prod_{k=1}^r (1+y L_k(-1))\cdot \lambda_y(K(-1))\,,$$
where $L_k=g^*ker(V^*\to V^*_k)$ and $K=g^*(V^*/\sum_{k=1}^r L_k)$.
{Divisors $B_k$ are SNC, therefore we may apply} the above decomposition to the projective bundles $B_I=\P(\bigcap_{k\in I} V_k)$ for any {multi-index} $I\in\{1,2,\dots, r\}$. By additivity of the motivic Chern classes we obtain
$$\mC(E\setminus B\subset E)=  g^*(\mC(C))\cdot (1+y)^{r-1}\prod_{k=1}^r L_k(-1)\cdot \lambda_y(K(-1))\,.$$
We will show that
$$R^pg_*\left(\O_{E/C}(s)\otimes\prod_{k=1}^r  L_k(-1)\otimes \lambda_y(K(-1))\right)=0\quad\text{ for }0<s<r\,,\;p\in \Z$$
already on the level of sheaves. The statement is local on $C$, hence we can assume that the bundles $L_k$ and $K$ are trivial.
Then
$$R^pg_*\left(\O_{E/C}(s)\otimes  \O_{E/C}(-r)\otimes (1+y\O_{E/C}(-1))^{\rk(V)-r}\right)$$
is a direct sum of the sheaves
$$R^pg_*\left(\O_{E/C}(s-r')\right)\quad \text{with } r\leq r'\leq \rk(V)\,.
$$
The statement follows from the vanishing {(cf. \cite[Exercise 8.4 (a)]{Hartshorne})}
$$R^pg_*\left(\O_{E/C}(-j)\right)=0\quad \text{for } 0<j<\rk(V)\,,\;p\in\Z\,.$$
\end{proof}
\begin{example}[Evidence by calculus]\rm
	Let us examine the extreme case: $$C=\bigcap_{k=1}^r D_k=pt\subset \C^r\,.$$ We consider the natural action of the torus $\T=(\C^*)^r$  and perform the local calculation. Then {$E=\P^{r-1}$ and $\D\cap E$ is a sum of coordinate hyperspaces. We want to compute
	$$g_*\left(\mC(E\setminus \D\subset E)\cdot L^s\right)=g_*\left(\mC(\P^{r-1}\setminus \D\subset \P^{r-1})\cdot \O_{\P^{r-1}}(s)\right)\,.$$
	By Lefschetz-Riemann-Roch theorem it is enough to compute restrictions of the above class to the fixed point set $\P^\T$. It can be done using the fundamental calculation of \cite[section 2.7]{FRW} and the product property of $\mC$ classes \cite[Theorem 4.2 (3)]{AMSS}. We obtain that the above pushforward is equal to}
	$$\sum_{k=1}^rt_k^{-s}\prod_{\ell\neq k}\frac{{(1+y)}t_k/t_\ell}{1-t_k/t_\ell}
	=(1+y)^{{r-1}}\prod_{\ell=1}^rt_\ell^{-1}\cdot\sum_{k=1}^r\frac{ t_k^{r-s}}{\prod_{\ell\neq k}(1-t_k/t_\ell)}
	$$
	{Due to Lefschetz-Riemann-Roch formula \cite[Thm.~3.5]{Tho} or \cite[Thm.~5.11.7]{ChGi} we have
		$$\sum_{k=1}^r\frac{ t_k^{r-s}}{\prod_{\ell\neq k}(1-t_k/t_\ell)}=\chi(\P^{r-1},\O_{\P^{r-1}}(s-r))$$
		which is zero for $s\in\{1,2,\dots,r-1\}$.}
\end{example}

The working assumption  of Lemma \ref{znik} that $C$ lies entirely in the intersection of components of $D$ may be removed.
\begin{theorem}\label{znik2} Suppose that $Y$ is a smooth variety and $\partial Y=\bigcup_{i\in I}D_i\subset Y$ is a simple normal crossing divisor. Let $C\subset Y$ be a smooth subvariety which intersects $\partial Y$ normally (i.e.~in a local coordinates the components of $\partial Y$ and the subvariety $C$ are given by vanishing of some variables). Suppose $C\subset \bigcap_{k=1}^r D_{i_k}$ for some pairwise distinct components $D_{i_k}\subset \partial Y$.
Denote by
 $b:Z\to Y$ the blowup of $Y$ in $C$. Then
$$b_*\left(\O_Z(-sE)\cdot\mC(Z^o\subset Z) \right)=\mC(Y^o\subset Y)$$
for $0\leq s\leq r-1$, where $Y^o= Y\setminus \partial Y$ and $Z^o=b^{-1}(Y^o)$.
\end{theorem}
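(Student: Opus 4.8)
The plan is to run the proof of Lemma~\ref{znik} verbatim in its reduction step, and then to dispose of the additional transversal components of $\partial Y$ by an additivity argument that localises the problem over strata of $C$. Write $b\colon Z=Bl_CY\to Y$ for the blow-up, $E$ for the exceptional divisor, $g=b|_E\colon E=\P(N_{C/Y})\to C$ for the associated projective bundle, $L=i^*\O_Z(-E)=\O_{E/C}(1)$, and let $i\colon E\hookrightarrow Z$, $i'\colon C\hookrightarrow Y$ be the inclusions; note that $C\subset\partial Y$, so $b|_{Z^o}\colon Z^o=b^{-1}(Y^o)\to Y^o$ is an isomorphism. Exactly as in Lemma~\ref{znik} --- applying \eqref{roznica} to $\alpha=\O_Z((-s+1)E)\cdot\mC(Z^o\subset Z)$, then \eqref{obciecie} together with the inclusion--exclusion formula for the SNC divisor $b^{-1}(\partial Y)=\widetilde{\partial Y}\cup E$ --- one obtains
\[
b_*\big(\O_Z((-s+1)E)\cdot\mC(Z^o\subset Z)\big)-b_*\big(\O_Z(-sE)\cdot\mC(Z^o\subset Z)\big)=(1+y)\,i'_*g_*\big(L^{s}\cdot\mC(E\setminus(\widetilde{\partial Y}\cap E)\subset E)\big),
\]
where $\widetilde{\partial Y}$ is the proper transform of $\partial Y$. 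Since the $s=0$ term equals $b_*\mC(Z^o\subset Z)=\mC(Y^o\subset Y)$ by functoriality of $\mC$, a telescoping argument reduces the theorem to the vanishing
\[
g_*\big(L^{s}\cdot\mC(E\setminus(\widetilde{\partial Y}\cap E)\subset E)\big)=0\in\K(C)\qquad\text{for }0<s<r.
\]

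The second step is to identify $\widetilde{\partial Y}\cap E$. Working in local coordinates adapted to the normal intersection of $C$ with $\partial Y$ (so that $C$ and all the $D_i$ are coordinate subspaces), the proper transform of a component $D_{i_k}$ containing $C$ meets $E$ in the hyperplane subbundle $\P(V_{i_k})\subset\P(N_{C/Y})$, where $V_{i_k}=N_{C/D_{i_k}}\subset N_{C/Y}$ is the corank-one subbundle of directions tangent to $D_{i_k}$ along $C$; since $\partial Y$ is SNC the subbundles $V_{i_1},\dots,V_{i_r}$ are in general position. A component $D_j$ with $C\not\subset D_j$ has multiplicity $0$ along $C$, so its proper transform equals its total transform and meets $E$ in the full sub-bundle $g^{-1}(D_j\cap C)$. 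Hence $\widetilde{\partial Y}\cap E=B\cup g^{-1}(\Sigma)$ with $B=\bigcup_{k=1}^r\P(V_{i_k})$ and $\Sigma=C\cap\bigcup_j D_j$ the SNC divisor cut on $C$ by the transversal components; a short local check in the blow-up charts shows $B\cup g^{-1}(\Sigma)$ is itself an SNC divisor in $E$. (We may assume the $D_{i_k}$ are all the components of $\partial Y$ containing $C$; if there were more, the argument below would give the vanishing for a larger range of $s$, which only strengthens the claim.)

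The final step is the additivity argument. For a multi-index $L$ set $\Sigma_L=\bigcap_{l\in L}(D_{j_l}\cap C)\subseteq C$, with $\Sigma_\emptyset=C$; each $\Sigma_L$ is smooth, and $\iota_L\colon E|_{\Sigma_L}=\P(N_{C/Y}|_{\Sigma_L})\hookrightarrow E$ is a closed embedding. Applying the formula \eqref{def:mc} with the identity resolution to the SNC divisor $B\cup g^{-1}(\Sigma)$ in $E$ (equivalently, using additivity of $\mC$ and inclusion--exclusion), then grouping the resulting terms by $L$ and using proper functoriality of $\mC$ along $\iota_L$ (again via \eqref{def:mc} on $E|_{\Sigma_L}$), one gets
\[
\mC\big(E\setminus(\widetilde{\partial Y}\cap E)\subset E\big)=\sum_{L}(-1)^{|L|}(\iota_L)_*\,\mC\Big(\big(E|_{\Sigma_L}\setminus\textstyle\bigcup_{k=1}^r\P(V_{i_k}|_{\Sigma_L})\big)\subset E|_{\Sigma_L}\Big).
\]
Applying $g_*(L^{s}\cdot-)$, the $L$-th term is the push-forward along $\Sigma_L\hookrightarrow C$ of $(g_{\Sigma_L})_*\big(\O_{E|_{\Sigma_L}/\Sigma_L}(s)\cdot\mC(\,\cdot\,)\big)$, where $g_{\Sigma_L}\colon E|_{\Sigma_L}\to\Sigma_L$; since the $V_{i_k}|_{\Sigma_L}$ remain in general position in $N_{C/Y}|_{\Sigma_L}$, the projective-bundle Lemma proven above applies over $\Sigma_L$ and shows this vanishes for $0<s<r$. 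Hence every summand is $0$, the displayed class is $0$, and the theorem follows.

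I expect the main obstacle to be the geometric bookkeeping of the second step: verifying in simultaneously adapted coordinates for $C$ and all components of $\partial Y$ that $\widetilde{\partial Y}\cap E$ is precisely the union of the $r$ hyperplane subbundles $\P(V_{i_k})$ and the pulled-back base divisor $g^{-1}(\Sigma)$, checking that these cross normally inside $E$, and confirming that the general-position hypothesis of the projective-bundle Lemma survives restriction to each stratum $\Sigma_L$. Once the geometry of $\widetilde{\partial Y}\cap E$ is pinned down, the K-theoretic part is a routine reduction to the Lemma already established.
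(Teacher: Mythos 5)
Your argument is correct, but it follows a genuinely different route from the paper. The paper proves Theorem \ref{znik2} by induction on the components of $\partial Y$ that do not contain $C$: writing $Z^\#=Z^o\sqcup \widetilde D_0^o$, it peels off one transversal component $D_0$ at a time, observes that $b$ restricted to the proper transform of $D_0$ is again a blow-up of $D_0$ along $C\cap D_0$ satisfying the hypotheses of Lemma \ref{znik}, and then applies Lemma \ref{znik} twice (on $Y$ and on $D_0$) together with additivity of $\mC$. You instead re-run the internal reduction of Lemma \ref{znik} once in the general setting via \eqref{roznica} and \eqref{obciecie}, identify $\widetilde{\partial Y}\cap E$ as the union of the hyperplane subbundles $\P(N_{C/D_{i_k}})$ and the pulled-back divisor $g^{-1}(\Sigma)$, and then stratify over $C$ by the intersections $\Sigma_L$ of the transversal components restricted to $C$, applying the projective-bundle vanishing lemma over each stratum; all the geometric checks you flag (the description of $\widetilde{\partial Y}\cap E$, its normal crossings inside $E$, preservation of general position and of $r\le\rank N_{C/Y}$ over each $\Sigma_L$) do go through because $C$ meets $\partial Y$ normally, and your parenthetical handling of extra components containing $C$ is sound. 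The trade-off: the paper's induction is shorter given Lemma \ref{znik} as a black box and needs no analysis of the exceptional divisor, while your argument avoids induction, reuses only the inner vanishing lemma for projective bundles, and yields along the way an explicit decomposition of $\mC\bigl(E\setminus(\widetilde{\partial Y}\cap E)\subset E\bigr)$ over the strata of $C$, at the cost of the extra blow-up bookkeeping you anticipate.
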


\begin{proof} The proof is an exercise in the inclusion-exclusion formula.
For simplicity let us consider the case when $\partial Y= \bigcup_{k=0}^{r}D_{k}$ and  the center of the blow-up is entirely contained in all but one component of the divisor $$C\subset \bigcap_{k=1}^rD_k\,.$$ The general case follows by induction.  Let $E\subset Z$ be the exceptional divisor and $\widetilde{D}_0$ the proper transform of $D_0$. Then
$$b'=b_{|\widetilde{D}_0}:\widetilde{D}_0\to D_0$$
is the blow-up of $D_0$ in the center
$$C'=C\cap D_0\,.$$
That is so because $D_0$ meets $C$ normally. 
The class of the exceptional divisor of $b'$ is the restriction of the class $E$, since $E$ meets $\widetilde D_0$ transversely.
The center of $b'$ is entirely contained in the intersection of $r$ divisors in $D_0$
$$C'\subset \bigcap_{k=1}^r(D_0\cap D_k)\,.$$
Let us introduce the following notation
$$
Y^\#=Y\setminus \bigcup_{k=1}^r D_k\,,\qquad
Y^o=Y\setminus \bigcup_{k=0}^r D_k\,,\qquad
\widetilde{D}_0^o=b^{-1}\left(D_0\setminus \bigcup_{k=1}^r D_k\right)\,.$$
We have disjoint unions
$$Y^\#=Y^o\cup D^o_0\,,\qquad Z^\#=Z^o\cup \widetilde{D}^o_0\,.$$
Denote by $i:D_0\to Y$ and $j:\widetilde D_0\to Z$ the inclusions.
By lemma \ref{znik} we have
$$b_*\big(\O_Z(-sE)\cdot\mC(Z^\#\subset Z)\big)=\mC(Y^\#\subset Y)$$
and
$$b'_*\big(j^*\O_Z(-sE)\cdot\mC(\widetilde D^o_0\subset \widetilde D_o)\big)=\mC(D_0^o\subset D_0)\,.$$
for $0\leq s\leq r-1$. Therefore by additivity of the motivic Chern class
\begin{align*}&b_*\big(\O_Z(-sE)\cdot\mC(Z^o\subset Z)\big)=\\
&=
b_*\big(\O_Z(-sE)\cdot\mC(Z^\#\subset Z)\big)
-b_*\big(\O_Z(-sE)\cdot j_*\mC(\widetilde D^o_0\subset \widetilde D_0)\big)\\
&=
b_*\big(\O_Z(-sE)\cdot\mC(Z^\#\subset Z)\big)-b_*j_*\big(j^*\O_Z(-sE)\cdot \mC(\widetilde D^o_0\subset \widetilde D_0)\big)\\
&=
b_*\big(\O_Z(-sE)\cdot\mC(Z^\#\subset Z)\big)-i_*b'_*\big(j^*\O_Z(-sE)\cdot \mC(\widetilde D^o_0\subset \widetilde D_0)\big)\\
&=
\mC(Y^\#\subset Y)-i_*\big(\mC( D^o_0\subset  D_0)\big)\\
&=\mC(Y^o\subset Y)
\end{align*}
\end{proof}

\begin{corollary}Let $\L$ be a line bundle on $Y$.
With the above notation
$$b_*\left(b^*\L(-sE)\cdot\mC({Z^o}\subset Z) \right)=\L\cdot\mC({Y^o}\subset Y)$$ for $0\leq s<r$.
\end{corollary}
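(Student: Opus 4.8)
The plan is to deduce this statement directly from Theorem~\ref{znik2} by means of the projection formula, keeping the notation of that theorem (so $C\subset Y$, the blow-up $b\colon Z\to Y$, the exceptional divisor $E$, and the bound $r$ are exactly as there). First I would rewrite the twisted factor on the left-hand side as a tensor product of a bundle pulled back from $Y$ with the class already handled in Theorem~\ref{znik2}: since $b^*\L(-sE)=b^*\L\otimes\O_Z(-sE)$, we have
$$b^*\L(-sE)\cdot\mC({Z^o}\subset Z)=b^*\L\cdot\bigl(\O_Z(-sE)\cdot\mC({Z^o}\subset Z)\bigr)$$
in $\K(Z)$.

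Next I would apply the projection formula for the proper morphism $b\colon Z\to Y$. Because $b^*\L$ is pulled back from $Y$, this gives
$$b_*\bigl(b^*\L\cdot\bigl(\O_Z(-sE)\cdot\mC({Z^o}\subset Z)\bigr)\bigr)=\L\cdot b_*\bigl(\O_Z(-sE)\cdot\mC({Z^o}\subset Z)\bigr)\,.$$
Here one uses that the projection formula holds in the equivariant K-theory of coherent sheaves — and, after passing to the smooth ambient setting where K-theory of coherent sheaves agrees with K-theory of vector bundles, of vector bundles — with the formal variable $y$ adjoined, since it is applied coefficientwise to the polynomial in $y$.

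Finally, Theorem~\ref{znik2} applies verbatim, with the same $C$, $r$, $E$, and yields $b_*\bigl(\O_Z(-sE)\cdot\mC({Z^o}\subset Z)\bigr)=\mC({Y^o}\subset Y)$ for $0\le s<r$. Substituting this into the previous display gives
$$b_*\bigl(b^*\L(-sE)\cdot\mC({Z^o}\subset Z)\bigr)=\L\cdot\mC({Y^o}\subset Y)\,,$$
which is the assertion. There is essentially no genuine obstacle here: the argument is pure bookkeeping on top of Theorem~\ref{znik2}. The only point worth a word is that twisting by a line bundle $\L$ pulled back from $Y$ changes neither the center $C$ of the blow-up nor the integer $r$ governing the admissible range of $s$, so the hypotheses of Theorem~\ref{znik2} are met without any modification.
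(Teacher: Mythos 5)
Your argument is correct and is exactly the paper's proof: the corollary follows from the projection formula applied to $b^*\L$ together with Theorem~\ref{znik2}. No issues to report.
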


\begin{proof} The corollary follows from the projection formula and Theorem \ref{znik2}.\end{proof}

\begin{corollary}\label{cor:niezal} The twisted motivic Chern class given by Definition \ref{def-twisted} does not depend on the resolution.\end{corollary}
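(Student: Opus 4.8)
The plan is to combine the reduction to a single blow-up, already set up at the beginning of \S\ref{sec:niez}, with the vanishing result of Theorem \ref{znik2} (in the twisted form recorded in the corollary immediately preceding the statement).

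First I would recall the reduction step. Given two resolutions $f_1\colon(Y_1,\partial Y_1)\to(X,\partial X)$ and $f_2\colon(Y_2,\partial Y_2)\to(X,\partial X)$ as in Definition \ref{def-twisted}, both restrict to isomorphisms over $X^o$, so the induced birational map $Y_1\dashrightarrow Y_2$ is an isomorphism over $Y_1^o\cong X^o\cong Y_2^o$ and its indeterminacy and exceptional loci map into $\partial X$. By the Weak Factorization Theorem \cite{AKMW}, \cite[Th.~0.0.1]{Wlodek}, applied in the form compatible with a prescribed simple normal crossing divisor, this map factors as a chain of blow-ups and blow-downs along smooth centers, each center contained in the boundary and meeting it normally, and with every intermediate variety again a resolution of $(X,\partial X)$ in the sense of Definition \ref{def-twisted}. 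Hence it is enough to prove that the class of Definition \ref{def-twisted} is unchanged if $Y$ is replaced by the blow-up $b\colon Z=Bl_C Y\to Y$ of one such center $C$; pushing the resulting identity forward by $f$ then identifies the classes computed from $Y$ and from $Z$, and chaining along the factorization finishes the proof.

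For the single blow-up I would argue as in \S\ref{sec:niez}. Writing $f^*\Delta=\sum_k c_k D_k$ with $C\subset\bigcap_{k=1}^r D_k$ and $C\not\subset D_i$ for $i>r$, the multiplicity of the exceptional divisor $E$ in $b^*f^*\Delta$ equals $\sum_{k=1}^r c_k$, so that
$$b^*\lceil f^*\Delta\rceil-\lceil b^*f^*\Delta\rceil=\Big(\sum_{k=1}^r\lceil c_k\rceil-\Big\lceil\sum_{k=1}^r c_k\Big\rceil\Big)E=sE,\qquad 0\le s\le r-1.$$
By the projection formula,
$$b_*\Big(\O_Z(\lceil b^*f^*\Delta\rceil)\cdot\mC(Z^o\subset Z)\Big)=\O_Y(\lceil f^*\Delta\rceil)\cdot b_*\Big(\O_Z(-sE)\cdot\mC(Z^o\subset Z)\Big),$$
and by Theorem \ref{znik2} the last push-forward equals $\mC(Y^o\subset Y)$, since $0\le s\le r-1$. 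This is precisely the content of the corollary immediately preceding the statement, applied with $\L=\O_Y(\lceil f^*\Delta\rceil)$.

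I do not expect a serious obstacle here: the computational heart of the matter --- the vanishing $R^pg_*\big(\O_{E/C}(-j)\big)=0$ for $0<j<\rk(V)$ that drives Lemma \ref{znik} and Theorem \ref{znik2} --- is already in place. The only point deserving care is that the Weak Factorization Theorem must be invoked in the version that keeps $f^{-1}(\partial X)$ simple normal crossing at every step and produces centers meeting the boundary normally, so that Theorem \ref{znik2} applies at each elementary blow-up; granting this, the bound $0\le s\le r-1$ is automatic and the remaining steps are routine.
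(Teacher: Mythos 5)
Your proposal is correct and follows essentially the same route as the paper: reduce via Weak Factorization (with centers normally crossing the boundary) to a single blow-up, compute that $\O_Z(\lceil (fb)^*\Delta\rceil)=b^*\O_Y(\lceil f^*\Delta\rceil)(-sE)$ with $0\le s\le r-1$, and conclude by the projection-formula corollary to Theorem \ref{znik2}. The paper's own proof is exactly this argument, stated more tersely.
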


\begin{proof} By \cite{AKMW}, \cite[Th.~0.0.1]{Wlodek} any two resolutions can be joined by a sequence of blow-ups or blow-downs with centers normally intersecting the components of the boundary divisor.
For a blow-up in a center $C$ contained in the $r$-fold intersection $\bigcap_{j=1}^r D_j$
$$Z\stackrel{b}{\longrightarrow} Y\stackrel{f}{\longrightarrow} X\,.$$
let $\L=\O_Y(\lceil f^*\Delta\rceil)$.
Then $\O_Z(\lceil (fb)^*\Delta\rceil)=b^*\L(-sE)$ with $0\leq s<r$ and we apply the previous corollary.
\end{proof}

We record the following local and product properties (compare \cite[Corollary 2.1]{BSY}.

\begin{proposition}\label{prop:prod} Let $(X,\partial X)$ be a pair considered in the Definition \ref{def-twisted} and $\Delta$ a $\Q$-Cartier divisor supported by $\partial X$. Then

\begin{enumerate}[(i)]
\item For an open set $U\subset X$
$$\mC(U,\partial X\cap U;\Delta\cap U)=\mC(X,\partial X;\Delta)_{|U}\,.$$
\item For a smooth variety $S$
$$\mC(X\times S,\partial X\times S;\Delta\times S)=\mC(S)\boxtimes\mC(X,\partial X\;\Delta)\,.$$
\end{enumerate}
\end{proposition}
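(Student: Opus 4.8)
The plan is to verify the two claimed properties by reducing everything to the definition of $\mC(X,\partial X;\Delta)$ via a resolution and then using the analogous properties of the ordinary motivic Chern class of a locally closed smooth subvariety. For part (i), I would start with a resolution $f:(Y,\partial Y)\to(X,\partial X)$ as in Definition \ref{def-twisted}. Since $U\subset X$ is open, $f^{-1}(U)\to U$ is again such a resolution for the pair $(U,\partial X\cap U)$: indeed $\partial Y\cap f^{-1}(U)$ remains an SNC divisor, $f$ restricts to an isomorphism over $U^o$, and $f^*(\Delta)|_{f^{-1}(U)}=(f|_{f^{-1}(U)})^*(\Delta\cap U)$, so $\lceil f^*\Delta\rceil$ restricts to $\lceil (f|_{f^{-1}(U)})^*(\Delta\cap U)\rceil$. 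Then I would invoke the base-change/restriction compatibility of proper pushforward for the Cartesian square over the open inclusion $U\hookrightarrow X$ (pullback to an open set commutes with $f_*$) together with the same restriction property for $\mC(Y^o\subset Y)$ — which follows from the corresponding local property of the ordinary motivic Chern class cited from \cite{BSY} — to conclude that the formula defining $\mC(X,\partial X;\Delta)$ restricts termwise to the formula defining $\mC(U,\partial X\cap U;\Delta\cap U)$.

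For part (ii), the strategy is to pick a resolution $f:(Y,\partial Y)\to(X,\partial X)$ and take $f\times\mathrm{id}_S:(Y\times S,\partial Y\times S)\to(X\times S,\partial X\times S)$ as a resolution of the product pair. This is legitimate because $S$ is smooth, so $Y\times S$ is smooth, $\partial Y\times S$ is SNC (the product of an SNC divisor with a smooth variety is SNC), and $(f\times\mathrm{id}_S)^*(\Delta\times S)=f^*(\Delta)\times S$, hence $\lceil (f\times\mathrm{id}_S)^*(\Delta\times S)\rceil=\lceil f^*\Delta\rceil\times S$ and $\O_{Y\times S}$ of this divisor is $\O_Y(\lceil f^*\Delta\rceil)\boxtimes\O_S$. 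Next I would use the multiplicativity of the motivic Chern class under products, which gives $\mC(Y^o\times S\subset Y\times S)=\mC(S)\boxtimes\mC(Y^o\subset Y)$ (this is the product formula for $\mC$, as in \cite[Corollary 2.1]{BSY}; on the resolution side it amounts to $\lambda_y\Omega^1_{Y\times S}(\log(\partial Y\times S))=\lambda_y\Omega^1_S\boxtimes\lambda_y\Omega^1_Y(\log\partial Y)$). Finally, $(f\times\mathrm{id}_S)_*$ applied to an exterior product $\alpha\boxtimes\beta$ equals $\mathrm{id}_*\alpha\boxtimes f_*\beta$, i.e.\ $(\mathrm{id}_S)_*$ on the $S$-factor and $f_*$ on the $Y$-factor; combining these identities termwise yields $\mC(X\times S,\partial X\times S;\Delta\times S)=\mC(S)\boxtimes\mC(X,\partial X;\Delta)$.

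I expect the routine part to be the bookkeeping with $\lceil\cdot\rceil$ and with the logarithmic/normal-crossing structure, and the genuine input to be, in both cases, the corresponding property already known for the ordinary motivic Chern class $\mC(Y^o\subset Y)$ of a smooth locally closed subvariety — restriction to an open set in (i), and the exterior product formula in (ii). The main subtlety worth flagging is that $\mC(X,\partial X;\Delta)$ a priori lives in the K-theory of coherent sheaves on $X$ (resp.\ $X\times S$), so all pushforwards and exterior products should be read there; but since the identities are established on the smooth resolutions $Y$, $Y\times S$ (where K-theory of coherent sheaves agrees with that of vector bundles) and then pushed down, no difficulty arises — the equalities hold already on $Y$ before applying $f_*$, so there is really nothing to overcome beyond citing the smooth-case statements and checking compatibility of the resolution data, exactly as in the proof of independence from the resolution (Corollary \ref{cor:niezal}).
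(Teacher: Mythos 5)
Your proposal is correct and follows essentially the same route as the paper: the paper's (very terse) proof also invokes the independence of the class from the chosen resolution (Corollary \ref{cor:niezal}) and then computes $\mC(U,\partial X\cap U;\Delta\cap U)$ and $\mC(X\times S,\partial X\times S;\Delta\times S)$ using the restricted resolution $f^{-1}(U)\to U$ and the product resolution $f\times\mathrm{id}_S$, respectively. Your write-up just makes explicit the routine compatibilities (open base change for $f_*$, $\lceil\cdot\rceil$ behaving well under restriction and product, and the exterior product formula for $\mC$) that the paper leaves implicit.
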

\begin{proof} The proposition follows from Corollary \ref{cor:niezal}. To compute the twisted motivic Chern classes in (i) and (ii) we can use the resolution of $(X,\partial X)$ restricted to $U$ or multiplied by $S$.\end{proof}

\section{Bia{\l}ynicki-Birula decomposition}
Suppose that one dimensional torus $\T=\C^*$ is acting on a smooth and complete variety $M$. Let $F\subset M^\T$ be a component of the fixed point set.
According to \cite{BB} the set
\begin{equation*}M_F^+=\{ x\in M\;|\; \lim_{t\to 0} t\cdot x\in F\}\end{equation*}
is a locally closed smooth subvariety. It is called the Bia{\l}ynicki-Birula (BB for short) cell. In the papers \cite{OM, OS, O2} it is called the attracting set.
The variety $M$ is a disjoint union of the cells
\begin{equation}\label{BB-dc}M=\bigsqcup_{F\subset M^\T} M^+_F\,.\end{equation}
Similarly we can define the dual cells (minus-cells)
\begin{equation}\label{eq:minus}M_F^-=\{ x\in M\;|\; \lim_{t\to \infty} t\cdot x\in F\}\,.\end{equation}
The sets $M^+_F$ and $M^-_F$ intersect transversely along $F$. If $F$ is a point, then both are isomorphic to affine spaces, otherwise the limit map $M^\pm_F\to F$ is an affine fibration.
{
\begin{definition}[cf. {\cite[Definition on p. 36]{StratMorse}}]
	We call a decomposition of a variety $M=\bigsqcup M_i$ into locally closed subsets a stratification when the following condition is satisfied
	$$\overline{M_i}\cap M_j\neq \varnothing \Rightarrow M_j\subset \overline{M_i} \,.$$
\end{definition}
}
The decomposition \eqref{BB-dc} does not have to be a stratification. Even for simple examples of surfaces it may happen that $M^+_{F_1}\cap \overline {M^+_{F_2}}\neq \emptyset$ but $M^+_{F_1}\not\subset \overline {M^+_{F_2}}$. The good news is that the decomposition is filterable, provided that $M$ is projective, see \cite{BB2}.
\medskip

When a bigger torus $\T\simeq(\C^*)^n$ is acting then we choose a one dimensional subtorus $\sigma:\C^*\to\T$. For a generic choice of $\sigma$ the fixed point set remains the same $M^\T=M^\sigma$. The fixed point set can be bigger when $\sigma$ belongs to a certain sum of hyperplanes \hbox{$\Sigma \subset \Hom(\C^*,\T)\otimes \R$.} The connected components of  $(\Hom(\C^*,\T)\otimes\R)\setminus \Sigma$ are called {\it co-weight chambers}.
The BB-decomposition depends only on the choice of the co-weight  chamber. In what follows we will fix a co-weight chamber and we do not study the dependence on the chamber.

\medskip
We will assume that $M$ is projective and that the BB-decomposition forms a stratification.
In the present paper we assume that $M^\T$ is finite.
Still we need a stronger assumption. We will assume that the stratification by BB-cells satisfies the Whitney conditions. This is necessary for the support axiom of the Okounkov envelopes \ref{def:enve}(1) presented below.
The Whitney condition guarantees that  the sum of the conormal bundles to BB-cells of $M$ is a closed subvariety of the cotangent space $T^*M$.
We will discuss this issue in \S\ref{sec:support}. The main subject of our consideration is the Newton inclusion property \ref{def:enve}(3). To prove that the twisted motivic Chern class has that property
we do not have to assume that the BB-decomposition is a stratification.

\section{Stable envelopes for the cotangent bundle} \label{s:stable}
 Let $N$ be a symplectic variety equipped with an action of algebraic torus $\TT$. Let $\T\subset \TT$ be a subtorus which {preserves} symplectic form. For a given polarization $T^{1/2}\in  K_\TT(N)$,
co-weight chamber $\mathfrak{C}$ and slope $s\in Pic(N)\otimes \Q$ the K-theoretic stable {envelope} is a characteristic class
$$Stab_{\mathfrak{C},T^{1/2}}^s\in K_\TT(N^\T\times N).$$
(We do not explain here the role of polarization, because in our case of the cotangent bundle there is a canonical choice of the polarization.)
Initially stable envelope was defined only for varieties which admit proper map to an affine variety (see \cite[paragraph 3.1.1]{OM}) such as Nakajima quiver varieties. It turns out that this condition may be {weakened}, it is used only to show that the sum of BB-cells (or stable manifolds) in $N$ is closed \cite[lemma 3.2.7]{OM}. This weaker condition is used in the recent paper \cite[paragraph 1.2.3]{O3} {which introduces stable envelopes in even more general context}.

We consider the simplest example of a symplectic variety, the cotangent bundle. Let $M$ be a smooth, projective variety endowed with an action of a torus $\T$. Suppose that the fixed point set $M^\T$ is finite. Consider the cotangent variety $N=T^*M$ with the action of the torus $\TT=\T\times\C^*$, where the group $\C^*$ acts on fibers by scalar multiplication. The subtorus $\T\subset\TT$ preserves the canonical symplectic form on $N$. Fix a co-weight chamber $\mathfrak{C}$ and polarization $T^{1/2}=TM.$ Chamber $\mathfrak{C}$ determines BB-decompositions of varieties $M$ and $N$. To define stable envelopes in this situation we need to check that sum of the BB-cells in $N$ is a closed subvariety. Note that $N^\T=M^\T$ and for a fixed point $\ee\in M^\T$ we have the BB-cell
$$M^+_\ee\subset M$$
and the BB-cell in $T^*M$
$$N^+_\ee\subset T^*M\,,$$
which is equal to the conormal bundle of $M^+_\ee$.

\begin{lemma}
	The subset
	$$\bigsqcup_{\ee\in M^\T} N_\ee^+\subset T^*M $$
	is closed if and only if the decomposition
	$$M=\bigsqcup_{\ee\in M^\T} M_\ee^+$$
	is a stratification which satisfies Whitney A condition.
\end{lemma}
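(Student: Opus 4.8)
The plan is to set $\Lambda=\bigsqcup_{\ee\in M^\T}N^+_\ee\subset T^*M$, where $N^+_\ee$ is the conormal bundle of $M^+_\ee$, and to split the equivalence into the (routine) implication ``Whitney stratification $\Rightarrow$ $\Lambda$ closed'' and its converse, the converse itself splitting into the Whitney condition (a) and the frontier condition. Two soft facts will be used throughout. Since the cells $M^+_\ee$ partition $M$, over a point $z\in M^+_\ee$ the only conormal space that can occur in $\Lambda$ is that of $M^+_\ee$, so $\Lambda\cap T^*_zM=N^+_{\ee,z}$, the space of covectors at $z$ vanishing on $T_zM^+_\ee$. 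And since $N^+_{\ee'}$ is conic (it is the conormal bundle of $M^+_{\ee'}$, equivalently the $\sigma$-attracting cell of $\ee'$ in $T^*M$), intersecting its closure with the zero section $M\subset T^*M$ and using conicity recovers $\overline{M^+_{\ee'}}$; in particular $\pi(\overline{N^+_{\ee'}})=\overline{M^+_{\ee'}}$ for $\pi\colon T^*M\to M$, and $\dim N^+_{\ee'}=\dim M$.

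First I would prove: \emph{if $\{M^+_\ee\}$ is a Whitney (a) stratification then $\Lambda$ is closed}. Since $\Lambda$ is a finite union it suffices to show $\overline{N^+_{\ee'}}\subset\Lambda$ for each $\ee'$. Take $\xi\in\overline{N^+_{\ee'}}$ and $x=\pi(\xi)$; the partition property puts $x$ in a unique cell $M^+_\ee$, and $x\in\overline{M^+_{\ee'}}$. If $\ee=\ee'$ then $\xi$ is conormal to $M^+_{\ee'}$ at $x$ by continuity of the tangent bundle along the smooth set $M^+_{\ee'}$. If $\ee\ne\ee'$, the frontier condition gives $M^+_\ee\subset\overline{M^+_{\ee'}}$, so $(M^+_{\ee'},M^+_\ee)$ is an incident pair of strata; writing $\xi=\lim_n\xi_n$ with $\xi_n$ conormal to $M^+_{\ee'}$ at $x_n\to x$, I pass to a subsequence along which the planes $T_{x_n}M^+_{\ee'}$ converge in the Grassmann bundle to some $\tau\subset T_xM$, apply Whitney (a) to get $T_xM^+_\ee\subset\tau$, and pass to the limit in the pairing to conclude that $\xi$ annihilates $\tau$, hence $T_xM^+_\ee$; so $\xi\in N^+_\ee\subset\Lambda$. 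Thus $\Lambda=\bigcup_{\ee'}\overline{N^+_{\ee'}}$ is closed.

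For the converse, assume $\Lambda$ is closed. To obtain Whitney (a): given $x_n\in M^+_{\ee'}$ with $x_n\to x\in M^+_\ee$ and $T_{x_n}M^+_{\ee'}\to\tau$, the conormal spaces $N^+_{\ee',x_n}$ converge to the annihilator of $\tau$ (taking annihilators is continuous on the Grassmann bundle), so every covector in that annihilator is a limit of covectors in $N^+_{\ee',x_n}\subset\Lambda$ and hence lies in $\Lambda$; but it sits over $x\in M^+_\ee$, where $\Lambda$ meets the fibre only in $N^+_{\ee,x}$, so the annihilator of $\tau$ is contained in the annihilator of $T_xM^+_\ee$, which is exactly $T_xM^+_\ee\subset\tau$. (In passing this forces $\dim M^+_\ee\le\dim M^+_{\ee'}$ whenever $\overline{M^+_{\ee'}}\cap M^+_\ee\ne\varnothing$.)

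The hard part is the frontier condition, and I would argue by contradiction: suppose $\overline{M^+_{\ee'}}\cap M^+_\ee\ne\varnothing$ but $M^+_\ee\not\subset\overline{M^+_{\ee'}}$ for some $\ee\ne\ee'$. Since $\overline{M^+_{\ee'}}$ is closed and $\sigma$-invariant and every point of $M^+_\ee$ flows to $\ee$ as $t\to0$, we get $\ee\in\overline{M^+_{\ee'}}$. Using a $\sigma$-equivariant linearization near $\ee$, identify a neighbourhood of $\ee$ in $M$ with a neighbourhood of $0$ in $T_\ee M=V^+\oplus V^-$ (the positive- and negative-weight spaces; there is no zero weight since $\ee$ is isolated), so that $M^+_\ee$ becomes $V^+$ and $\overline{M^+_{\ee'}}$ becomes a $\sigma$-invariant closed germ $Z\ni0$ with $Z\not\supset V^+$. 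Closedness of $\Lambda$ forces $\overline{N^+_{\ee'}}\cap T^*_\ee M\subset N^+_{\ee,\ee}$, the annihilator $(V^-)^*$ of $V^+$ inside $(V^+)^*\oplus(V^-)^*$. The contradiction should come from showing that $\overline{N^+_{\ee'}}\cap T^*_\ee M$, which contains all conic limits of conormal directions to the smooth locus of $M^+_{\ee'}\subset Z$ along sequences approaching $\ee$, is \emph{not} contained in $(V^-)^*$: for a $\sigma$-invariant proper subvariety $Z$ through the origin of the positively graded space $V^+\oplus V^-$ which does not contain $V^+$, a weight/degree argument on $Z$ (examining its tangent cone at $\ee$, which is $\sigma$-invariant, of dimension $<\dim M$, and does not contain $V^+$) produces a limiting conormal covector over $\ee$ pairing nontrivially with $V^+$. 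This last step — converting a failure of the frontier condition into a limit covector at a fixed point that is not conormal to its cell — is where the real work lies; everything else is compactness of Grassmann bundles, continuity of annihilators and of tangent bundles along smooth strata, and conicity.
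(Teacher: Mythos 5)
Your two easy directions are fine: ``stratification with Whitney (a) $\Rightarrow$ the union of conormals is closed'' and ``closed $\Rightarrow$ condition (a) at every incidence point'' are correct, and together they reprove the standard dictionary between condition (a) and conormal varieties that the paper simply quotes from Goresky--MacPherson and Sch\"urmann (after noting the set is constructible, so Zariski and Euclidean closedness agree). The genuine gap is the remaining implication: closedness of $\bigsqcup_{\ee}N^+_\ee$ implies the frontier condition. Your reduction via $\sigma$-invariance is sound (if $\overline{M^+_{\ee'}}\cap M^+_\ee\neq\varnothing$ then $\ee\in Z:=\overline{M^+_{\ee'}}$, and if a neighbourhood of $\ee$ in $M^+_\ee$ lay in $Z$ the flow would give $M^+_\ee\subset Z$), but the statement you reduce to --- for a $\sigma$-invariant $Z\ni\ee$ whose germ at $\ee$ does not contain $M^+_\ee$, some limit at $\ee$ of conormal covectors to $M^+_{\ee'}$ pairs nontrivially with $V^+=T_\ee M^+_\ee$ --- is exactly the nontrivial content of this direction, and you do not prove it; the appeal to ``a weight/degree argument on the tangent cone'' is a gesture, as you yourself concede. (The equivariant local linearization at $\ee$ you invoke also deserves a word, but that is minor.)

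Moreover, this missing step cannot be filled by the compactness/continuity arguments that carry the rest of your proof, because without the torus action the implication is false. Take $M=\C^2$ decomposed into $S_3=\{y=0\}$, $S_2=\{y=(x-1)^2\}\setminus\{(1,0)\}$ and $S_1$ the complement of their union: all pieces are smooth, connected and locally closed; the union of conormal bundles is closed (the only limiting conormal direction of $S_2$ over $(1,0)$ is spanned by $dy$, which is conormal to $S_3$, and the zero section causes no harm), so condition (a) holds at every incidence point; yet $\overline{S_2}$ meets $S_3$ only in $(1,0)$, so the frontier condition fails. Hence any correct argument for your last step must use the $\C^*$-structure of the cells in an essential way (for instance via a weight-homogeneous $f$ in the ideal of $Z$ with $f|_{V^+}\not\equiv 0$, whose weight is forced to be positive, together with a curve-selection argument producing a sequence in $M^+_{\ee'}$ along which the $du$-part of the conormal direction dominates), and that is precisely what is absent. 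The paper itself sidesteps all of this by citing the known equivalence between Whitney condition A and closedness of the union of conormal spaces, so your plan is a genuinely different, more self-contained route --- but as written it is incomplete at its crucial point.
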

\begin{proof}
	Note that the subset $\bigsqcup_{\ee\in M^\T} N_\ee^+\subset T^*M $ is constructible. It follows that it is Zariski closed if and only if it is closed in Euclidean topology. The Whitney condition A, when translated into language of  conormal bundles means exactly  that the union of conormal bundles to strata is closed in Euclidean topology (cf. \cite[section 1.8 page 44]{StratMorse}, \cite[page 186]{SchIMPAN}).
\end{proof}
In the rest of this section we consider a smooth projective variety $M$ such that the BB-decomposition is a Whitney stratification.
 For a fixed point  $\ee \in M^\T$ the positive part of tangent space $T_\ee M$  is denoted by $T_\ee^+M$, the negative part by $T_\ee^-M$. Denote by $\hbar$ the character of $\TT$ which is the projection on $\C^*$ and let
\begin{equation}\label{eq:nu}\nu_\ee^-=T^-_\ee M\oplus (\C_\hbar\otimes(T^+_\ee M)^*)\end{equation} be the negative   part of the vector space $T_\ee(T^*M)=T_\ee M\oplus (\C_\hbar\otimes T^*_\ee M)$. It is equal to the normal space of the BB-cell $(T^*M)^+_\ee$ at $\ee$.
We have a partial order on $M^\T$ induced by the chosen co-weight chamber $\mathfrak{C}$. Note that since $M^\T$ is discreet by our standing assumption, $M$ is the sum of a finite number of BB-cells, hence $Pic(M)$ is a free abelian group. We remind equivalent  version of the stable envelope axioms  for the cotangent bundle.
 \begin{definition}\label{def:enve} \rm
Choose a {generic} slope, i.e. a {generic} fractional line bundle $$s=\L^{1/n}\in Pic(M)\otimes\Q\,.$$
The K-theoretic stable envelope on $T^*M$
 is a set of elements
  $$Stab^s(\ee)\in K_{\T\times\C^*}(M)\simeq K_\T(M)[\hbar,\hbar^{-1}]$$
  indexed by the fixed points such that:
  \begin{enumerate}
   \item {\it Support Axiom.} For any fixed point $\ee_0$
   $$Stab(\ee_0)\quad\text{is supported by}\quad\bigsqcup_{\ee \le \ee_0} (T^*M)^+_{\ee} \,.$$
   \item {\it Normalization.}  For any fixed point $\ee_0$
   $$Stab(\ee_0)_{|\ee_0}=
   (-1)^{\dim (T_{\ee_0}^+M)}\frac{eu(\nu^-_{\ee_0})}{\det T_{\ee_0}^+M} \,.$$
   \item {\it Newton inclusion property.} Choose any linearization of the slope $s$. For a pair of fixed points  $\ee_0,\ee$ such that $\ee_0>\ee$ we demand  containment of Newton polytopes
   $$\Ne^\T\left(Stab(\ee_0)_{|\ee}\right)+\we_{\ee_0}(s)
   \subseteq
   \Ne^\T\left(eu(\nu^-_{\ee})\right)-\we_\ee(\det T_\ee^+M) +\we_{\ee}(s).$$
  \end{enumerate}
 \end{definition}

The stable envelope for a generic slope is unique, see \cite[Prop. 9.2.2]{O2}. For a proof adapted to the case of isolated fixed points see \cite[Prop. 3.6]{K}.
{
\begin{remark} \label{rem:sgen}
	The above three axioms define a unique class only for generic slope. One may strengthen the Newton inclusion property axiom (see \cite[Appendix A]{K}). For generic slopes the strengthened axiom is equivalent to \eqref{def:enve}(3), while for non-generic slopes it determines a K-theory class.	
\end{remark}
}
 \begin{remark}\rm Above axioms agree with  \cite[section 2.1]{OS}, \cite[section 9.1]{O2}  up to normalization. Namely
$$Stab^s(\ee)=\hbar^{-\frac{1}{2}\dim M_\ee^+}Stab_{\mathfrak{C},TM}^s(\ee)$$
  as proven in \cite[{Appendix A}]{K}. We normalize the stable envelope by the factor $\hbar^{-\frac{1}{2}\dim M_\ee^+}$ to avoid fractional powers of $\hbar$.
 \end{remark}
\begin{remark} \rm
	The assumption that the set
	$\bigsqcup_{\ee\in M^\T} (T^*M)^+_{\ee}$
	is a closed subvariety of $T^*M$ is necessary to state the support axiom.
\end{remark}
\begin{lemma}\label{lem:Newton}
 To prove the Newton polytope axiom  it is enough to show that
 \begin{equation}\label{eq:newton}\Ne^{\T}\left(Stab(\ee_0)_{|\ee}\right)\subseteq \Ne^{\T}\left(eu(T_\ee M)\right)+\we_{\ee}(s)-\we_{\ee_0}(s).\end{equation}
\end{lemma}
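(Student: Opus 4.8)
The plan is to deduce the full Newton inclusion property \ref{def:enve}(3) from the apparently weaker statement \eqref{eq:newton}. Fix a pair of fixed points $\ee_0>\ee$. Moving $\we_{\ee_0}(s)$ to the right, axiom \ref{def:enve}(3) for this pair reads
$$\Ne^\T\big(Stab(\ee_0)_{|\ee}\big)\ \subseteq\ \Ne^\T\big(eu(\nu^-_\ee)\big)-\we_\ee(\det T^+_\ee M)+\we_\ee(s)-\we_{\ee_0}(s)\,,$$
whereas \eqref{eq:newton} gives the same inclusion but with $\Ne^\T(eu(T_\ee M))$ in place of $\Ne^\T(eu(\nu^-_\ee))-\we_\ee(\det T^+_\ee M)$. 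Since the $s$-terms agree, it suffices to prove the purely local comparison of polytopes
$$\Ne^\T\big(eu(T_\ee M)\big)\ \subseteq\ \Ne^\T\big(eu(\nu^-_\ee)\big)-\we_\ee(\det T^+_\ee M)\,,$$
which I will in fact show is an equality; no property of $Stab(\ee_0)$ is used beyond monotonicity of $\subseteq$ under translation.

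The one non-formal ingredient is the description of the Newton polytope of a K-theoretic Euler class as a zonotope: for a $\TT$-representation $E$ with weights $w_1,\dots,w_N$ (with multiplicity), $\Ne^\T(eu(E))=\sum_{a=1}^N\mathrm{conv}\{0,-\bar w_a\}$, where $\bar w_a\in\Hom(\T,\C^*)\otimes\Q$ is the restriction of $w_a$ to $\T$. The inclusion ``$\subseteq$'' is automatic, since $eu(E)=\prod_a(1-t^{-w_a})$, the support of a product lies in the Minkowski sum of the supports, and convex hull commutes with the projection $\Hom(\TT,\C^*)\to\Hom(\T,\C^*)$. For ``$\supseteq$'' one checks that each vertex $v$ of the zonotope is reached exactly once: choose a linear functional $\phi$ attaining its maximum on the zonotope uniquely at $v$ and, after a small perturbation, with $\phi(\bar w_a)\neq 0$ for all $a$; then $v=-\sum_{\phi(w_a)<0}w_a$, and the only way to obtain the monomial $t^v$ in $\prod_a(1-t^{-w_a})$ is to pick the term $-t^{-w_a}$ precisely for the indices with $\phi(w_a)<0$, so the coefficient of $t^v$ equals $\pm 1\neq 0$. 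This sign-and-choice bookkeeping is the only point where care is needed; everything else is formal.

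Granting this, the computation is immediate. Write the $\T$-weights of $T_\ee M$ as $\alpha_1,\dots,\alpha_p$ on the attracting part $T^+_\ee M$ and $\beta_1,\dots,\beta_q$ on $T^-_\ee M$, so $\we_\ee(\det T^+_\ee M)=\sum_{i=1}^p\alpha_i$. By \eqref{eq:nu} the weights of $\nu^-_\ee$ are $\beta_1,\dots,\beta_q$ together with $\hbar-\alpha_1,\dots,\hbar-\alpha_p$, whose restrictions to $\T$ are $\beta_j$ and $-\alpha_i$. The zonotope formula then gives
$$\Ne^\T(eu(T_\ee M))=\sum_{i}\mathrm{conv}\{0,-\alpha_i\}+\sum_{j}\mathrm{conv}\{0,-\beta_j\}\,,\qquad \Ne^\T(eu(\nu^-_\ee))=\sum_{i}\mathrm{conv}\{0,\alpha_i\}+\sum_{j}\mathrm{conv}\{0,-\beta_j\}\,.$$
Since $\mathrm{conv}\{0,\alpha_i\}-\alpha_i=\mathrm{conv}\{0,-\alpha_i\}$ for each $i$, subtracting $\we_\ee(\det T^+_\ee M)=\sum_i\alpha_i$ from the second polytope yields the first. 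Hence $\Ne^\T(eu(\nu^-_\ee))-\we_\ee(\det T^+_\ee M)=\Ne^\T(eu(T_\ee M))$, and therefore \eqref{eq:newton} implies \ref{def:enve}(3), as claimed.
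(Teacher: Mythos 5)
Your argument is correct and follows the same reduction as the paper: axiom \ref{def:enve}(3) differs from \eqref{eq:newton} only in replacing $\Ne^\T(eu(T_\ee M))$ by $\Ne^\T(eu(\nu^-_{\ee}))-\we_\ee(\det T_\ee^+M)$, so everything hinges on the purely local comparison of these two polytopes. The paper's own proof is a one-line citation of (the proof of) \cite[Prop.~5.1]{K} for the inclusion $\Ne^\T(eu(T_\ee M))\subseteq\Ne^\T(eu(\nu^-_{\ee}))-\we_\ee(\det T_\ee^+M)$, whereas you prove it in-line, and in fact as an equality, via the zonotope description of Newton polytopes of K-theoretic Euler classes together with the weight decomposition \eqref{eq:nu}; the identity $\mathrm{conv}\{0,\alpha_i\}-\alpha_i=\mathrm{conv}\{0,-\alpha_i\}$ then finishes the computation. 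Your vertex bookkeeping for the zonotope is sound; the one implicit hypothesis is that the restrictions $\bar w_a$ to $\T$ are nonzero (so that $\phi$ can be perturbed off all of them), which holds here because $\ee$ is an isolated fixed point, and even if some $\bar w_a$ vanished the corresponding factor would only multiply coefficients by a nonzero element of $\Z[\hbar^{\pm1}]$, so no cancellation in the $\hbar$-direction can occur. So the net effect of your write-up is to make the lemma self-contained where the paper defers to \cite{K}.
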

\begin{proof}
 It follows from the proof of \cite[Proposition {5.1}]{K} that
 $${\Ne^\T\left(eu(T_\ee M)\right)\subseteq\Ne^\T\left(eu(\nu^-_{\ee})\right)-\we_\ee(\det T_\ee^+M) \,.}$$
\end{proof}

\begin{remark}\rm \label{rk:funktorialnosc}
The stable envelopes are functorial in the following  general sense. Suppose we have an  automorphism of the pair $(\T,M)$. Like in the case of the Weyl group action on $G/B$, the automorphism $\varphi:M\to M$ does not have to preserve the $\T$-action  but we assume that there is a group automorphism $\psi:\T\to\T$ such that for $t\in \T$ and $x\in M$ we have $\varphi(t\cdot x)=\psi(t)\cdot\varphi(x)$. Then $$\varphi^*(Stab_{\mathfrak{C},T^{1/2}}^s(\ee)))=Stab_{\psi^{-1}(\mathfrak{C}),T^{1/2}}^{\varphi^*(s)}(\varphi^{-1}(\ee))\,,$$  since the stable envelopes are defined intrinsically. (Of course $\varphi$ preserves the canonical polarization $T^{1/2}=TM\subset T(T^*M)$.)
For the case of ${M}=G/B$  the natural Weyl group action
and the resulting invariance of the stable envelopes is given by \cite[Lemma 8.2a]{AMSS}.
A detailed picture of the action of the Weyl group on the set of stable envelopes corresponding to various slopes is given in \cite{SZZwall}.
There is also another effect -- the Grothendieck duality, see \cite[Lemma 8.2b]{AMSS}, which involves the change of polarization to the opposite {one}. We will not discuss it here.
\end{remark}

\section{Twisted classes as stable envelopes}
\label{sec:setup}
 Let $M$ be a smooth projective variety with a torus $\T$ action and let us assume that $M^\T$ is finite.
We focus on the main application of twisted motivic Chern classes considered in this paper -- comparison with the stable envelopes. We will take as the singular space $X$ the closure  of a Bia{\l}ynicki-Birula cell.
Precisely: we  choose a one parameter subgroup $\C^*\subset \T$, such that $M^{\C^*}=M^\T$ and
we fix a fixed point $\ee_0 \in M^\T$. We set $X^o=M^+_{\ee_0}$. Let $X$ be the closure of $X^o$ in $M$, $\partial X=X\setminus X^o$. Denote by $i$ the inclusion $i:X\subset M$.
For a $\Q$-divisor $\Delta$ with the support contained in $\partial X$ we have defined an element in the K-theory -- the twisted motivic Chern class. We fix a slope, i.e. an element $s=\L^{1/n}\in Pic(M)\otimes \Q$, a formal root of a line bundle $\L$.
In section \ref{sec:dependency} we will show that the slope determines a $\Q$-Cartier divisor $\Delta_{\ee_0,s}$ in $X$ having the support contained in $\partial X$.  We aim to show that the classes
\begin{align*}
i_*\mC(X,\partial X,\Delta_{\ee_0,s}) \in K_\T(M)[y]
\end{align*}
and
\begin{align*}
Stab^s(\ee_0) \in K_\T(T^*M)[h,h^{-1}]
\end{align*}
agree up to normalization (where $Stab^s(\ee_0)$ denotes the stable envelope for the cotangent variety $T^*M$). Namely let $\pi$ be the projection $\pi: T^*M\to M$ and let $\rho$ be the map
$$\rho \colon K_\T(T^*M)[y] \to K_\T(T^*M)[h,h^{-1}] $$
given by $\rho(y)=-h$. Our goal is the following theorem
\begin{theorem}\label{tw:podsumowanie}
	The class
	$$h^{-\dim(M^+_{\ee_0})}\rho\left(\pi^*i_*\mC(X,\partial X;\Delta_{\ee_0,s})\right) $$
	satisfies the normalization axiom and the Newton inclusion property of the stable envelope $Stab^s(\ee_0)$. Moreover if the stratification is regular enough (see discussion in section \ref{sec:support}) this class satisfies the support axiom and {for a generic slope} we have an equality:
	$$h^{-\dim(M^+_{\ee_0})}\rho\left(\pi^*i_*\mC(X,\partial X;\Delta_{\ee_0,s})\right)=Stab^s(\ee_0). $$
\end{theorem}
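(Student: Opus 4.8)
The plan is to check the three conditions of Definition \ref{def:enve} for the rescaled class
$$\Xi:=\hbar^{-\dim M^+_{\ee_0}}\rho\big(\pi^*i_*\mC(X,\partial X;\Delta_{\ee_0,s})\big)\in K_\T(T^*M)[\hbar,\hbar^{-1}]$$
and then to appeal to the uniqueness of the stable envelope. The normalization and Newton inclusion axioms need no regularity hypothesis; the support axiom and the resulting equality use the Whitney stratification assumption and are the place where the argument is genuinely delicate.

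For the \emph{normalization axiom} I would work locally at $\ee_0$. Since $\ee_0\in X^o=M^+_{\ee_0}$ and $X^o$ is smooth and open in $X$, there is a $\T$-invariant open $U\subset M$ with $X\cap U=X^o\cap U$ smooth, closed in $U$ and disjoint from $\partial X$; by Proposition \ref{prop:prod}(i) the restriction of $\mC(X,\partial X;\Delta_{\ee_0,s})$ to $X\cap U$ is just $\mC(X\cap U)=\lambda_y\big(T^*(X\cap U)\big)$, as the boundary and the divisor disappear over $U$. The embedding $X\cap U\hookrightarrow U$ is regular with normal space $T_{\ee_0}M/T^+_{\ee_0}M=T^-_{\ee_0}M$ at $\ee_0$, so by the K-theoretic self-intersection formula
$$\big(i_*\mC(X,\partial X;\Delta_{\ee_0,s})\big)_{|\ee_0}=eu(T^-_{\ee_0}M)\cdot\lambda_y\big((T^+_{\ee_0}M)^*\big).$$
Applying $\pi^*$ (trivial at a fixed point), the substitution $y\mapsto-\hbar$ and the factor $\hbar^{-\dim M^+_{\ee_0}}$, a short computation with Euler classes that uses $\nu^-_{\ee_0}=T^-_{\ee_0}M\oplus(\C_\hbar\otimes(T^+_{\ee_0}M)^*)$ rewrites the right-hand side as $(-1)^{\dim T^+_{\ee_0}M}\,eu(\nu^-_{\ee_0})/\det T^+_{\ee_0}M$, which is exactly Definition \ref{def:enve}(2).

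For the \emph{Newton inclusion axiom} I would use Lemma \ref{lem:Newton} to reduce, for $\ee_0>\ee$, to the inclusion $\Ne^\T(\Xi_{|\ee})\subseteq\Ne^\T\big(eu(T_\ee M)\big)+\we_\ee(s)-\we_{\ee_0}(s)$. The pullback $\pi^*$ and the scalar $\hbar^{-\dim M^+_{\ee_0}}$ do not change $\T$-weights, and $\rho$ merely specializes the variable $y$, so $\Ne^\T(\Xi_{|\ee})\subseteq\Ne^\T\big((i_*\mC(X,\partial X;\Delta_{\ee_0,s}))_{|\ee}\big)$. Theorem \ref{thm:main} bounds this polytope by $\Ne^\T\big(eu(T_\ee M)\big)$ translated by $\we_\ee(\Delta_{\ee_0,s})$, so it only remains to identify the translation: by the construction of $\Delta_{\ee_0,s}$ in Section \ref{sec:dependency} --- a $\frac1n$-multiple of an integral divisor supported on $\partial X$ that represents $\L_{|X}$, with the linearization normalized so that its weight at $\ee_0$ vanishes --- one has $\we_\ee(\Delta_{\ee_0,s})=\we_\ee(s)-\we_{\ee_0}(s)$, and the three inclusions fit together.

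Finally, under the assumption that the BB-decomposition of $M$ is a Whitney stratification, the union $\bigsqcup_{\ee}N^+_\ee\subset T^*M$ is closed (the Lemma of Section \ref{s:stable}) and the stable envelope for a generic slope is unique (\cite[Prop. 9.2.2]{O2}, \cite[Prop. 3.6]{K}). The class $i_*\mC(X,\partial X;\Delta_{\ee_0,s})$ is by definition a push-forward from $X=\overline{M^+_{\ee_0}}$, hence supported on $X$ inside $M$; the microlocal reformulation of this fact --- that $\pi^*$ of the class is supported on $\bigsqcup_{\ee\le\ee_0}N^+_\ee$ in $T^*M$ --- is exactly what the Whitney condition provides, and I would carry this out in Section \ref{sec:support}. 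Once all three axioms hold, uniqueness forces $\Xi=Stab^s(\ee_0)$. I expect the support axiom to be the genuine obstacle: the normalization and Newton parts are formal once Theorem \ref{thm:main} is available, but translating ``supported on $\overline{M^+_{\ee_0}}$'' into a statement about the conormal directions in $T^*M$ requires controlling the characteristic variety of the BB-stratification and cannot be done without the regularity hypothesis.
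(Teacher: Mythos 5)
Your treatment of the normalization axiom and of the Newton inclusion property is correct and follows essentially the paper's own route: the local computation at $\ee_0$ (restrict away from $\partial X$, where the twist by $\O(\Delta_{\ee_0,s})$ is invisible, then the self-intersection formula and the identity $\lambda_{-1}(V)=(-1)^{\dim V}\det V\cdot\lambda_{-1}(V^*)$ to match Definition \ref{def:enve}(2)), and the reduction via Lemma \ref{lem:Newton} to Theorem \ref{thm:main} together with the identification $\we_\ee(\Delta_{\ee_0,s})=\frac1n\bigl(\we_\ee(\L)-\we_{\ee_0}(\L)\bigr)$, which comes from the natural linearization of $\O_X(n\Delta_{\ee_0,s})$ and the good section of Section \ref{sec:dependency}. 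These parts are fine.

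The gap is in the support part. The class $\pi^*i_*\mC(X,\partial X;\Delta_{\ee_0,s})$ is supported, as a K-theory class, on $\pi^{-1}\bigl(\overline{M^+_{\ee_0}}\bigr)$, which is much larger than $\bigsqcup_{\ee\le\ee_0}(T^*M)^+_\ee$; there is no automatic ``microlocal reformulation'' of the fact that the class is pushed forward from $X$. What the Whitney condition A actually buys is only that the union of conormal varieties is closed, i.e.\ that the support axiom is well posed. The verification in the paper goes through Lemma \ref{lem:pod}: one needs (i) support of $i_*\mC(X,\partial X;\Delta_{\ee_0,s})$ in $\bigsqcup_{\ee\le\ee_0}M^+_\ee$, which follows from filterability of the BB-decomposition, and, crucially, (ii) the divisibility condition that $\lambda_y(T^*_\ee M^+_\ee)$ divides the restriction $i_*\mC(X,\partial X;\Delta_{\ee_0,s})_{|\ee}$ at every fixed point $\ee\le\ee_0$. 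Your proposal never addresses (ii), and this is exactly where the strength of the regularity hypothesis enters: the paper establishes divisibility only when the stratification is locally of product form (e.g.\ Schubert cells in $G/P$), or when a resolution of $(\overline{M^+_{\ee_0}},\partial\overline{M^+_{\ee_0}})$ is, stratum by stratum (all intersections $E_J$), transverse to the slice $M^-_\ee$ (the Proposition in Section \ref{sec:support}); it explicitly leaves open whether the Whitney conditions alone imply divisibility for twisted classes. So your assertion that the Whitney condition ``is exactly what provides'' the support statement overstates the situation; to complete the argument you must either assume the stronger regularity and run the divisibility argument, or restrict to cases (untwisted class, or slopes with $\lceil f^*\Delta_{\ee_0,s}\rceil=0$) where non-characteristic pullback applies. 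Once (i) and (ii) are in place, your final appeal to uniqueness of stable envelopes for generic slope is correct and is also how the paper concludes.
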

{
	\begin{remark}
		In the above theorem we do not assume that the slope $s$ is generic. The twisted motivic Chern class may be defined for an arbitrary slope and coincides with the stable envelope for a general slope. Due to proposition \ref{pro:deform}
		the $\mC$ class for a slope $s$ which is not general enough is equal to the stable envelope for a small anti-ample deformation of $s$
	\end{remark}
}
\begin{corollary}
	Consider a homogeneous variety $G/P$ with the action of {the maximal} torus \hbox{$\T\subset P$.} For a fixed point $w\in (G/P)^\T$ there is an equality:
		$$h^{-\dim(G/P^+_{w})}\rho\left(\pi^*i_*\mC(\overline{G/P^+_{w}},\partial (G/P^+)_{w};\Delta_{w,s})\right)=Stab^s(w). $$
\end{corollary}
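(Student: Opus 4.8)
The plan is to derive the corollary from Theorem~\ref{tw:podsumowanie}. That theorem already shows, with no hypothesis beyond smoothness, projectivity and finiteness of the fixed locus, that the rescaled class $h^{-\dim(M^+_{\ee_0})}\rho\bigl(\pi^*i_*\mC(X,\partial X;\Delta_{\ee_0,s})\bigr)$ obeys the normalization axiom and the Newton inclusion property, and it upgrades this to the support axiom --- and hence to the full identity with $Stab^s(\ee_0)$ --- as soon as the Bia{\l}ynicki-Birula decomposition of $M$ is a Whitney stratification. So for $M=G/P$ the only point that requires an argument is that its BB-decomposition is a Whitney stratification.

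First I would fix the geometry. Choose a generic one-parameter subgroup $\sigma\colon\C^*\to\T$ in the prescribed co-weight chamber, so that $(G/P)^\sigma=(G/P)^\T=\{wP/P:w\in W^P\}$, with $W^P$ the set of minimal-length representatives of $W/W_P$. The chamber of $\sigma$ singles out a Borel subgroup $B$, $\T\subset B$, for which the attracting directions at every fixed point are the $B$-directions; with this choice
$$(G/P)^+_w=BwP/P\cong\mathbb{A}^{\ell(w)}\,,$$
$\overline{(G/P)^+_w}=X_w$ is the Schubert variety, and $\partial(G/P^+)_w=X_w\setminus BwP/P$ is a union of lower Schubert cells. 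Since $\overline{X_w}$ is the union of the Schubert cells indexed by the lower Bruhat interval of $w$, the decomposition $G/P=\bigsqcup_{w\in W^P}BwP/P$ is a stratification, and the twisted class $\mC(\overline{(G/P)^+_w},\partial(G/P^+)_w;\Delta_{w,s})$ is the one produced by the general setup of Section~\ref{sec:setup}, with $\Delta_{w,s}$ the $\Q$-Cartier divisor on $X_w$ constructed in Section~\ref{sec:dependency}.

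It then remains to invoke the local product structure of the Schubert stratification: around any $\T$-fixed point, $\T$-equivariantly and compatibly with the stratification, the pair $\bigl(G/P,\{BwP/P\}_w\bigr)$ is isomorphic to a product of an affine space with a lower-dimensional stratified affine space of the same kind. This classical fact (see the discussion in Section~\ref{sec:support}), together with the facts that the Whitney conditions are local and are preserved under products with a smooth factor, yields by induction on $\dim(G/P)$ that $\{BwP/P\}_w$ is a Whitney stratification. By the lemma of Section~\ref{s:stable} the set $\bigsqcup_w N^+_w\subset T^*(G/P)$ is then a closed subvariety, the support axiom is meaningful, and Theorem~\ref{tw:podsumowanie} applies verbatim, giving the asserted equality for a generic slope; for a non-generic slope it follows by the same deformation argument as in Proposition~\ref{pro:deform} and the remarks following Theorem~\ref{tw:podsumowanie}, reading $Stab^s(w)$ via the strengthened Newton axiom of Remark~\ref{rem:sgen}. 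The main obstacle is exactly this geometric input: identifying the $\T$-equivariant BB-cells of $G/P$ with the $B$-orbits attached to the chamber, and quoting the local-product description of Schubert varieties. Once that is in place, the corollary is immediate.
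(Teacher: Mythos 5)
There is a genuine gap, and it lies exactly at the step you treat as automatic. You reduce the corollary to the claim that the Schubert/BB decomposition of $G/P$ is a Whitney stratification, reading Theorem \ref{tw:podsumowanie} as saying ``Whitney $\Rightarrow$ support axiom''. That is not what the theorem (or the discussion in Section \ref{sec:support} it points to) asserts. The Whitney condition only makes the support axiom \emph{well-posed}: it guarantees that $\bigsqcup_\ee (T^*M)^+_\ee$ is closed in $T^*M$. To actually \emph{verify} the axiom for a class of the form $\rho(\pi^*\alpha)$ one needs, by Lemma \ref{lem:pod}, two things: the support containment in $M$ (which is automatic here) \emph{and} the divisibility condition \eqref{eq:podzielnosc}, i.e.\ that $\lambda_y(T^*_\ee M^+_\ee)$ divides $i_*\mC(\overline{M^+_{\ee_0}},\partial M^+_{\ee_0};\Delta_{\ee_0,s})_{|\ee}$ at every fixed point $\ee\le\ee_0$. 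The paper states explicitly that it is \emph{not clear} whether the Whitney conditions imply this divisibility for twisted motivic Chern classes; so the inference you rely on is precisely the point left open, and your argument never addresses divisibility at all.

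Ironically, you already quote the ingredient that closes the gap: the local product structure of the Schubert stratification around each fixed point. The paper's proof uses it not to check Whitney-ness but directly: locally near $\ee$ the pair $(G/P,\{\text{cells}\})$ is a product of $M^+_\ee$ with a transverse slice (this is \cite[Theorem 9.1]{K}), and then the local and product properties of the twisted classes, Proposition \ref{prop:prod}, show that the restriction at $\ee$ factors as $\lambda_y(T^*_\ee M^+_\ee)$ times a class coming from the slice, which is the divisibility \eqref{eq:podzielnosc}. Combined with Lemma \ref{lem:pod} this gives the support axiom, and then uniqueness of stable envelopes for a generic slope yields the stated equality (with the non-generic case handled as in the remark after Theorem \ref{tw:podsumowanie}). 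So your proof becomes correct, and essentially the paper's, once you replace ``local product $\Rightarrow$ Whitney $\Rightarrow$ support axiom'' by ``local product $+$ Proposition \ref{prop:prod} $\Rightarrow$ divisibility $\Rightarrow$ support axiom via Lemma \ref{lem:pod}''.
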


\begin{remark}
	Note that it is enough to check the support axiom for the class $$\rho\left(\pi^*i_*\mC(X,\partial X;\Delta_{\ee_0,s})\right).$$
	Moreover operations $\rho$, $\pi^*$ and multiplication by $h^{-\dim(M^+_{\ee_0})}$ do not change the Newton polytope. It follows that it is enough to check the Newton inclusion property for the class $$i_*\mC(X,\partial X;\Delta_{\ee_0,s}).$$
\end{remark}

\begin{lemma} \label{lem:pod}
	Consider a class $\alpha\in K_\T(M)[y]$. To prove that the class $\rho(\pi^*\alpha)$ satisfies support axiom of $Stab^s(\ee_0)$ it is enough to show that
	$$\alpha\quad\text{is supported by}\quad\bigsqcup_{\ee \le \ee_0} M^+_{\ee} $$
	and
	$$ \mC_{y}(M^+_{\ee})_{|\ee}=\lambda_{y}(T^*_{\ee}M^+_{\ee})\quad\text{divides}\quad\alpha_{|\ee}$$
	for every fixed point $\ee \le \ee_0.$
\end{lemma}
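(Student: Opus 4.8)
The plan is to show directly that $\gamma:=\rho(\pi^*\alpha)\in K_\TT(T^*M)$ is supported on $\bigsqcup_{\ee\le\ee_0}N^+_\ee=\bigsqcup_{\ee\le\ee_0}(T^*M)^+_\ee$, which is precisely the support axiom \ref{def:enve}(1) for $Stab^s(\ee_0)$. Because the BB-decomposition is a stratification, every down-closed union of cells is closed in $M$; in particular $Z:=\bigsqcup_{\ee\le\ee_0}M^+_\ee$ is closed, and by flat base change along $\pi$ the first assumption on $\alpha$ already gives that $\gamma$ is supported on $\pi^{-1}(Z)=\bigsqcup_{\ee\le\ee_0}\pi^{-1}(M^+_\ee)$.

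The heart of the argument is a local computation at one cell. Fix $\ee\le\ee_0$. Since $M^+_\ee$ is equivariantly an affine space and $\C^*$ acts trivially on $M$, pullback along $\pi$ and restriction to $\ee$ identify $K_\TT(\pi^{-1}(M^+_\ee))\cong K_\TT(M^+_\ee)\cong K_\TT(\{\ee\})$, and under this identification $\gamma|_{\pi^{-1}(M^+_\ee)}$ becomes $\rho(\alpha_{|\ee})$. Inside $\pi^{-1}(M^+_\ee)=T^*M|_{M^+_\ee}$ the conormal $N^+_\ee$ is the total space of a subbundle whose fibre over $\ee$ is $(T^-_\ee M)^*\otimes\C_h$; hence the normal bundle of $N^+_\ee$ in $\pi^{-1}(M^+_\ee)$ has Euler class $\lambda_{-1}(T^+_\ee M\otimes\C_{-h})$, and by the Koszul resolution a class in $K_\TT(\pi^{-1}(M^+_\ee))$ is supported on $N^+_\ee$ exactly when it is divisible by this element. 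A short character computation gives
$$\rho(\lambda_y(T^*_\ee M^+_\ee))=\lambda_{-h}((T^+_\ee M)^*)=\frac{(-h)^{\dim M^+_\ee}}{\det T^+_\ee M}\cdot\lambda_{-1}(T^+_\ee M\otimes\C_{-h}),$$
the prefactor being a unit, so the divisibility hypothesis $\lambda_y(T^*_\ee M^+_\ee)\mid\alpha_{|\ee}$ is equivalent to: $\gamma|_{\pi^{-1}(M^+_\ee)}$ is supported on $N^+_\ee$, for every $\ee\le\ee_0$. (As a consistency check, $eu(T^-_\ee M)\cdot\lambda_{-1}(T^+_\ee M\otimes\C_{-h})=eu(\nu^-_\ee)$, in agreement with the normalization axiom.)

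Then I would peel the cells off $Z$ from the top. Choose $\ee_1$ maximal in $\{\ee\le\ee_0\}$; then $Z\setminus M^+_{\ee_1}$ is again a down-closed union of cells, hence closed, so $V:=M\setminus(Z\setminus M^+_{\ee_1})$ is open with $V\cap Z=M^+_{\ee_1}$. Over $\pi^{-1}(V)$ the class $\gamma$ is supported on the smooth closed subvariety $\pi^{-1}(M^+_{\ee_1})$, hence $\gamma|_{\pi^{-1}(V)}=j_*\delta$ for the inclusion $j\colon\pi^{-1}(M^+_{\ee_1})\hookrightarrow\pi^{-1}(V)$ and some $\delta$, and then $j^*j_*\delta=eu(T^-_{\ee_1}M)\cdot\delta$ equals $\gamma|_{\pi^{-1}(M^+_{\ee_1})}$, which by the previous paragraph is divisible by $\lambda_{-1}(T^+_{\ee_1}M\otimes\C_{-h})$. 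The classes $eu(T^-_{\ee_1}M)$ and $\lambda_{-1}(T^+_{\ee_1}M\otimes\C_{-h})$ are coprime in the Laurent ring $K_\TT(\{\ee_1\})$ — the former involves no $h$, the irreducible factors of the latter all do — so $\delta$ is itself divisible by $\lambda_{-1}(T^+_{\ee_1}M\otimes\C_{-h})$, hence $\delta$, and with it $\gamma|_{\pi^{-1}(V)}$, is supported on $N^+_{\ee_1}$. Thus $\gamma$ vanishes on $T^*M\setminus(\pi^{-1}(Z\setminus M^+_{\ee_1})\cup N^+_{\ee_1})$. Repeating the argument with $Z\setminus M^+_{\ee_1}$ in place of $Z$ — the conormals already split off sit inside $\pi^{-1}$ of their own cells and hence are disjoint from the $\pi^{-1}(M^+_\ee)$ over the remaining cells, so they do not interfere — exhausts $Z$ after finitely many steps and leaves $\gamma$ supported on $\bigsqcup_{\ee\le\ee_0}N^+_\ee$.

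The step I expect to be the real obstacle is the equivalence in the second paragraph: checking that divisibility by $\lambda_y(T^*_\ee M^+_\ee)$, once carried through $\rho$ and $\pi^*$, is exactly divisibility by the Euler class of the normal bundle of $N^+_\ee$ inside $\pi^{-1}(M^+_\ee)$, together with the coprimality of that Euler class and the class $eu(T^-_\ee M)$ that the self-intersection formula unavoidably produces in the peeling. Once these algebraic points are in place, the remainder is a routine induction along the stratification using the localization sequences in equivariant K-theory.
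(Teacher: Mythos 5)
First, note that the paper does not prove this lemma directly: it cites \cite[Lemma 5.2--4]{RTV} and \cite[Lemma 5.7]{K}, and your overall strategy (identify $K_{\TT}(\pi^{-1}(M^+_{\ee}))$ with $K_{\TT}(\{\ee\})$, translate the divisibility by $\lambda_y(T^*_\ee M^+_\ee)$ into divisibility by the Euler class of the normal bundle of the conormal $N^+_\ee$ inside $\pi^{-1}(M^+_\ee)$, and peel off cells via localization sequences and the self-intersection formula) is exactly the standard argument behind those references. Your single-cell computation, the unit factor $(-h)^{\dim M^+_\ee}/\det T^+_\ee M$, and the first peeling step at a maximal fixed point are all correct.

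The genuine gap is in the iteration. After the first step you know that $\gamma$ vanishes on $\pi^{-1}(M\setminus Z_1)\setminus N^+_{\ee_1}$, so over the next open set the class is supported on $\pi^{-1}(M^+_{\ee_2})\cup N^+_{\ee_1}$, \emph{not} on $\pi^{-1}(M^+_{\ee_2})$ alone; your one-line justification (''the conormals already split off \dots are disjoint from the $\pi^{-1}(M^+_\ee)$ over the remaining cells'') does not dispose of this, because although $N^+_{\ee_1}$ is disjoint from $\pi^{-1}(M^+_{\ee_2})$, its \emph{closure} is not: it can meet $\pi^{-1}(M^+_{\ee_2})$ along $N^+_{\ee_2}$ (this is precisely where the Whitney/closedness hypothesis is used, to know it meets it only there). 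Consequently you can neither write $\gamma$ on $\pi^{-1}(V_2)$ as a pushforward from the smooth closed $\pi^{-1}(M^+_{\ee_2})$ and repeat the clean division in the Laurent ring, nor excise $\overline{N^+_{\ee_1}}$ without puncturing $\pi^{-1}(M^+_{\ee_2})$ and losing the identification of its $K$-theory with $K_{\TT}(\{\ee_2\})$. Moreover, $K$-theory is not a sheaf, so vanishing of $\gamma$ on two overlapping open sets cannot simply be glued; ''repeating the argument'' must be organized as an induction over a nested family of open sets. A correct version runs as follows: set $Z_j=Z\setminus(M^+_{\ee_1}\cup\dots\cup M^+_{\ee_j})$ and $\Theta_j=\pi^{-1}(M\setminus Z_j)\setminus\bigcup_{m\le j}N^+_{\ee_m}$, which is open by the Whitney condition; assuming $\gamma|_{\Theta_{j-1}}=0$, the localization sequence for the closed subset $A_j=\pi^{-1}(M^+_{\ee_j})\setminus N^+_{\ee_j}\subset\Theta_j$ gives $\gamma|_{\Theta_j}=\iota_{A_j*}(\epsilon)$; now $G_{\TT}(A_j)\simeq K_{\TT}(\{\ee_j\})/\big(eu(\mu_j)\big)$ where $\mu_j$ is the normal bundle of $N^+_{\ee_j}$ in $\pi^{-1}(M^+_{\ee_j})$, the divisibility hypothesis says $\gamma|_{A_j}=0$ in this quotient, and the self-intersection formula yields $eu(T^-_{\ee_j}M)\cdot\epsilon=0$ there; your coprimality observation then shows $eu(T^-_{\ee_j}M)$ is a nonzerodivisor modulo $eu(\mu_j)$, so $\epsilon=0$ and $\gamma|_{\Theta_j}=0$. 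So the division now happens in a quotient of the representation ring rather than in the UFD itself; the coprimality you isolated is indeed the right algebraic input, but the inductive framework that makes it applicable is missing from your write-up and is not ''routine'' in the form you stated it.
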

\begin{proof}
	It follows form \cite[Lemma 5.2-4]{RTV} or \cite[lemma {5.7}]{K}.
\end{proof}

\begin{remark}
	{It is possible to show that the twisted motivic Chern class satisfies also the stronger version of Newton inclusion property from \cite{K} (see remark \ref{rem:sgen}).}
\end{remark}

\section{Normalization axiom}
We consider situation described in section \ref{sec:setup}.
The normalization condition does not involve the slope and its verification is the same as in \cite[Prop. {5.1}]{K}. Let us repeat the calculations. The center of the cell $\ee_0\in M^+_{\ee_0}$ is a smooth point.
The twisted motivic Chern class localized at ${\ee_0}$ {equals} to $$\ii_*\mC(X,\partial X;\Delta_{\ee_0,s})_{\ee_0}=\lambda_{-1}((T_{\ee_0}^-M)^*)\cdot\lambda_y((T_{\ee_0}^+M)^*).$$
The first factor is present because of the inclusion map $\ii: M^+_{\ee_0}\to M$, the second factor by the normalization property of motivic Chern classes \eqref{eq:mC-norm}. {Note that $\ee_0$ lies in the interior $X^o$, while the divisor $\Delta_{\ee_0,s}$ is supported on the boundary $\partial X$. Therefore, the element $\O(\Delta_{\ee_0,s})_{|\ee_0}$ is trivial.}

We consider the extended torus action $\TT=\T\times \C^*$ on $T^*M$. Recall the formula \eqref{eq:nu} for the normal space to the conormal of the cell:
$$\nu^-_{\ee_0}=
T_{\ee_0} ^-M\oplus (\C_\hbar \otimes (T^+_{\ee_0} M)^*)$$
as the representation of $\TT$. Here $\C_\hbar$ denotes the natural representation of $\C^*$. Hence the K-theoretic Euler class at ${\ee_0}\in T^*M$ of $\nu^-_{\ee_0}$ is equal to
$$\lambda_{-1}((T_{\ee_0}^-M)^*)\cdot \lambda_{-1} (\C_{1/\hbar}\otimes T^+_{\ee_0} M)\,.$$
We apply the identity
$$\lambda_{-1}(V)=(-1)^{\dim(V)}\det(V)\cdot \lambda_{-1}(V^*)$$
for $V=\C_{1/\hbar}\otimes T^+_{\ee_0} M$ and obtain
\begin{align*}eu(\nu^-_{\ee_0})=&(-1)^{\dim(M^+_{\ee_0})}\det(\C_{1/h}\otimes T^+_{\ee_0} M)\cdot\lambda_{-1}((T_{\ee_0}^-M)^*)\cdot \lambda_{-1} (\C_\hbar\otimes (T^+_{\ee_0} M)^*)\\
=&(-1)^{\dim(M^+_{\ee_0})}h^{-\dim(M^+_{\ee_0})}\det(T^+_{\ee_0} M)\cdot\lambda_{-1}((T_{\ee_0}^-M)^*)\cdot \lambda_{-\hbar} ((T^+_{\ee_0} M)^*)
\,.\end{align*}
Setting $\rho(y)=-\hbar$ we obtain
$$(-1)^{\dim(M^+_{\ee_0})} \frac{eu(\nu^-_{\ee_0})}{\det(T^+_{\ee_0} M)}=
h^{-\dim(M^+_{\ee_0})}
\rho\left(\ii_*\mC(X,\partial X;\Delta_{{\ee_0},s})\right)_{\ee_0}\,,$$
which is exactly the normalization axiom Def.~\ref{def:enve}(2).

\section{Newton inclusion property}
We consider situation described in section \ref{sec:setup}.
We consider a slope $s=\L^{\frac1n}$. Suppose that $\Delta$ is an $\T$-equivariant $\Q$-Cartier divisor on $X$ such that $ |\Delta|\subset\partial X$, $n\Delta$ is Cartier, and $i^*\L\simeq \O_X(n\Delta)$.
{Due to \cite[Proposition 2.10]{Brion} any} linearization of $i^*\L$ differs from
the natural linearization (see \S\ref{notacja1}) of $\O_X(n\Delta)$ by a twist. For any fixed point $\ee\in X^\T$
$$\we_\ee(\L)-\we_\ee(\O_X(n\Delta))=\we_{\ee_0}(\L)-\we_{\ee_0}(\O_X(n\Delta))\,.$$
Note that $\we_{\ee_0}(\O_X(n\Delta))=0$. Therefore
$$\we_\ee(\L)-\we_{\ee_0}(\L)=\we_{\ee}(\O_X(n\Delta))\,.$$
\medskip

In this section we aim to show that the class
$i_*\mC(X,\partial X;\Delta)$
satisfies the Newton inclusion property of the stable envelope
$Stab
^{s}(\ee_0)$ (see section~\ref{s:stable}).
Namely, according to Lemma \ref{lem:Newton}, we need to prove that for a fixed point $\ee<\ee_0$ there is an inclusion:
\begin{align} \label{wyr:30}
\Ne^{ \T}\left(i_*\mC\left(X,\partial X;\Delta\right)_{|\ee}\right)\subset \Ne^{ \T}\left(eu\left(T_\ee M\right)\right)+\frac{\we_\ee(\L)-\we_{\ee_0}(\L)}{n}\,,
\end{align}
which is equivalent to
\begin{align} \label{wyr:3}
\Ne^{ \T}\left(i_*\mC\left(X,\partial X;\Delta\right)_{|\ee}\right)-\we_\ee(\Delta)
\subset \Ne^{ \T}\left(eu\left(T_\ee M\right)\right).
\end{align}
\bigskip

The rest of this section contains the proof of the main result:

\begin{theorem}\label{thm:main} The Newton inclusion property \eqref{wyr:3} holds.\end{theorem}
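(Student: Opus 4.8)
The plan is to reduce the statement to a local computation at the fixed point $\ee$ using a $\T$-equivariant resolution that is simultaneously adapted to the BB-cell structure near $\ee$ and to the divisor $\Delta$. First I would pick a resolution $f\colon(Y,\partial Y)\to(X,\partial X)$ with $\partial Y=f^{-1}(\partial X)$ a $\T$-invariant SNC divisor and $f$ an isomorphism over $X^o=M^+_{\ee_0}$; by Corollary~\ref{cor:niezal} the class $\mC(X,\partial X;\Delta)$ does not depend on this choice, so I am free to take $Y$ as convenient. By the localization theorem, $i_*\mC(X,\partial X;\Delta)_{|\ee}=\sum_{p\in f^{-1}(\ee)\cap Y^{\T}}$ of the local contributions, each of the product form computed in \S4: at a fixed point $p$ over $\ee$, writing the tangent weights of $Y$ at $p$ as $w_1,\dots,w_{\dim Y}$ and the multiplicities of $f^*\Delta$ along the local coordinate hyperplanes as $d_k$, the localized twisted class is $eu(T_pY)\cdot\prod_k t^{\lceil d_k\rceil w_k}\cdot(\text{factors }(y+1)t^{-w_k}\text{ or }1+yt^{-w_k})$, and then $i_*$ contributes division by $eu(N_{p}(Y\hookrightarrow f^{-1}(\text{nbhd of }\ee)))$ — more precisely by the Euler class of the ambient normal directions. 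The upshot is that $i_*\mC(X,\partial X;\Delta)_{|\ee}$ is a sum of Laurent monomials times $eu(T_\ee M)$ divided by various monomials, and I must show every exponent vector appearing, after subtracting $\we_\ee(\Delta)$, lands in $\Ne^{\T}(eu(T_\ee M))$.

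The key structural input is that the BB-decomposition is filterable (this needs $M$ projective, \cite{BB2}) together with the inequality $\ee<\ee_0$, which forces $\ee\in\overline{M^+_{\ee_0}}=X$ only through boundary strata, so all of $f^{-1}(\ee)$ lies in $\partial Y$ and every $d_k$ that is nonzero reflects a genuine boundary divisor passing through $p$. The second step is to identify $\we_\ee(\Delta)$ with the appropriate sum $\sum_k d_k w_k$ over the directions in which $f^*\Delta$ has nonzero multiplicity at $p$ — this is exactly the compatibility between the natural linearization of $\O_X(n\Delta)$ and pullback worked out just before the theorem statement, $\we_\ee(\L)-\we_{\ee_0}(\L)=\we_\ee(\O_X(n\Delta))$. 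The third step is the combinatorial heart: one checks that for each summand, the exponent of the monomial prefactor (coming from $\prod_k t^{\lceil d_k\rceil w_k}$, the $t^{-w_k}$ factors, and the inverted ambient Euler class) minus $\we_\ee(\Delta)=\sum d_k w_k$ equals a sum of the form $\sum_k \epsilon_k w_k$ with $\epsilon_k\in\{0,-1\}$ plus correction terms $(\lceil d_k\rceil-d_k)w_k\in[0,1)w_k$, and that the set of all such vectors, as $p$ and the sign choices vary, is contained in $\mathrm{conv}\{\sum_k\eta_k w_k : \eta_k\in\{0,-1\}\}=\Ne^{\T}(eu(T_\ee M))$. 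Here I would use that the tangent weights $w_k$ at $p$ map, under $df$, to a spanning configuration inside the tangent weights of $M$ at $\ee$ — with the attracting (positive) weights of $T_\ee M$ realized and the fact that a BB-cell closure's resolution keeps the normal/conormal directions controlled — so that the monomial shifts never escape the cube spanned by the ambient weights.

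The main obstacle I expect is precisely this last containment: controlling, uniformly over all fixed points $p\in f^{-1}(\ee)$ and all choices of resolution-adapted coordinates, how the round-up corrections $(\lceil d_k\rceil-d_k)w_k$ interact with the $-w_k$ shifts from the logarithmic/Euler-class factors, and verifying they stay inside $\Ne^{\T}(eu(T_\ee M))$ rather than merely in some larger translate. The delicate point is that individual weights $w_k$ at $p$ need not be among the weights of $T_\ee M$, only their images under $df$ are constrained, so one must argue at the level of the pushforward (the sum over $p$) and not term by term — analogously to how the vanishing Lemma and Theorem~\ref{znik2} were proved by inclusion–exclusion rather than pointwise. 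I would handle this by an induction on the resolution via the Weak Factorization Theorem, reducing to a single blow-up along a center normally intersecting $\partial Y$ and checking that a blow-up changes the local exponent data only by the controlled amount $sE$ with $0\le s<r$ appearing in \S5, so that the Newton polytope inclusion is stable under the elementary modifications; the base case is the trivial resolution of a product chart, where the inclusion is the explicit computation illustrated in the Basic calculation example and in the $C=\mathrm{pt}\subset\C^r$ example.
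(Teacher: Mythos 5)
Your first step (localize over a $\T$-equivariant SNC resolution and use the weight identity $\we_\ee(\L)-\we_{\ee_0}(\L)=\we_\ee(\O_X(n\Delta))$ to reduce \eqref{wyr:30} to \eqref{wyr:3}) matches the paper, but from there your plan has a genuine gap. First, you assume the fixed locus of $Y$ over $\ee$ consists of isolated points $p$ with an explicit monomial product formula for the localized class; there is no reason an equivariant resolution of $\overline{M^+_{\ee_0}}$ has isolated fixed points, and the paper's argument is organized precisely so as to work with arbitrary fixed components $F\subset f^{-1}(\ee)\cap Y^\T$, where the restriction lives in $K_\T(F)[y]$ and no product-of-monomials description is available. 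Second, and more seriously, your ``combinatorial heart'' has no working engine: as you yourself note, the tangent weights $w_k$ at $p$ need not map to weights of $T_\ee M$, so a term-by-term containment of exponent vectors in the cube $\Ne^\T(eu(T_\ee M))$ is false in general, and your proposed repair --- induction via Weak Factorization with a ``base case'' given by a trivial resolution of a product chart --- does not exist: the class $i_*\mC(X,\partial X;\Delta)_{|\ee}$ is independent of the resolution (Corollary \ref{cor:niezal}), so modifying the resolution changes nothing that could be inducted on, and a closure of a BB-cell admits no canonical product/toric chart at $\ee$ to serve as a base case (the $\P^1$ and $C=pt\subset\C^r$ examples in the paper play entirely different roles).

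The idea your proposal is missing is the criterion of Lemma \ref{lem:FRW}: the Newton polytope inclusion \eqref{wyr:3} is equivalent to the existence of the limit of the quotient along a generic one-parameter subgroup, and existence of limits is stable under pushforward from the fixed components and under passing to convex combinations of twisting weights (Lemmas \ref{lem:prostowanie} and \ref{lem:conv}). This is what lets the paper argue ``at the level of the pushforward'' as you wanted: after the Lefschetz--Riemann--Roch decomposition \eqref{wyr:1}, one only needs Lemma \ref{lem:limit-frac}, i.e.\ existence of the limit of each summand; the round-up twist $\O_Y(\lceil D\rceil)$ is handled by the convexity lemma (its normalized weight lies in the convex hull of the weights of the integral bundles $L_I=\prod_{j\in I}\O_Y(D_j)_{|F}$), the restriction to boundary intersections is handled by the adjunction-type identity of Corollary \ref{cor:SNC}, and the decisive analytic input is the known limit theorem for motivic Chern classes under BB-decomposition (\cite{FRW}, Theorem 4.2; see also \cite{WeBB}), which gives
$$\lim_\sigma\frac{\mC\left(D_I^o\subset D_{I}\right)_{|F}}{eu(\nu(F\subset D_{I}))}=\mC\left((D_I^o)_F^{\sigma+}\to F\right),$$
while the remaining factors $1/(1-\O_Y(-D_j)_{|F})$ tend to $0$ or $1$. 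Without this limit criterion (or an equivalent substitute) your exponent bookkeeping cannot be closed up into a proof.
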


Consider a SNC resolution of singularities
$$f:(Y,\partial Y) \to (X,\partial X).$$
Set $D=f^*\Delta$.
For a fixed component $$F \subset Y^\T\cap f^{-1}(\ee)$$ denote by $f_F$ the restriction of $f$ to  $F$.
The Lefschetz-Riemann-Roch  formula (cf. \cite[Thm.~3.5]{Tho} or \cite[The 5.11.7]{ChGi} for quasi-projective varieties) implies that
$$
\frac{i_*\mC\left(X,\partial X;\Delta\right)_{|\ee}}{eu(T_\ee M)}=
\frac{{i_*f_*}\mC\left(Y,\partial Y;D\right)_{|\ee}}{eu(T_\ee M)}=
\sum_{F\subset f^{-1}(\ee) \cap Y^\T} f_{F*}\left(\frac{\mC\left(Y,\partial Y;D\right)_{|F}}{eu\left(\nu(F\subset Y)\right)}\right)\,.
$$
The Lefschetz-Riemann-Roch formula can be rewritten as
\begin{align}
\label{wyr:1}
\frac{t^{\we_\ee(-\Delta)}\,i_*\mC\left(X,\partial X;\Delta)\right)_{|\ee}}{eu(T_\ee M)}=
\sum_{F\subset f^{-1}(\ee) \cap Y^\T} f_{F*}\left(\frac{t^{\we_F(-D)}\,\mC\left(Y,\partial Y;D \right)_{|F}}{eu\left(\nu(F\subset Y)\right)}\right).
\end{align}
We will show that each summand of the above sum satisfies the Newton inclusion property
$$\Ne^{ \T}(t^{\we_F(-D)}\,\mC\left(Y,\partial Y;D \right)_{|F})\subset\Ne^{ \T}(eu\left(\nu(F\subset Y)\right))$$ and we will apply the following tool  to compare Newton polytopes.

\begin{lemma} [\cite{FRW}]\label{lem:FRW}
	Consider a smooth variety $Y$ with trivial action of a torus $\T$. Let $a$ and $b$ be two classes in the equivariant $K$-theory $K_\T(Y)$. Suppose that $b$ is invertible in the localized $K_\T(Y)[y]$. The following conditions are equivalent
	\begin{enumerate}
		\item There is an inclusion of Newton polytopes
		$$\Ne^{ \T}(a)\subseteq \Ne^{ \T}(b)$$
		\item For a general enough one parameter subgroup $\sigma:\C^*\to \T$
		the limit
		$$\lim_{\tt \to 0} \frac{a_{|\sigma}}{b_{|\sigma}} \in K(Y)[y]$$
		exists.
	\end{enumerate}
\end{lemma}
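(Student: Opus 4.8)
\emph{Setup and the valuation--support-function dictionary.} The plan is to reinterpret both conditions through the $\tt$-adic valuation of restrictions along one parameter subgroups and then invoke the classical support-function description of polytope containment. Since $\T$ acts trivially on $Y$ we have $K_\T(Y)[y]\simeq K(Y)[y]\otimes_\Z R(\T)$ with $R(\T)=\Z[\Lambda]$, $\Lambda=\Hom(\T,\C^*)$; write $a=\sum_w a_wt^w$ and $b=\sum_w b_wt^w$ as finite sums with $a_w,b_w\in K(Y)[y]$, so that $\Ne^{\T}(a)=conv\{w:a_w\neq0\}$ and likewise for $b$. A one parameter subgroup $\sigma\in\Hom(\C^*,\T)=\Lambda^\vee$ induces the restriction $t^w\mapsto\tt^{\langle w,\sigma\rangle}$, $K_\T(Y)[y]\to K(Y)[y][\tt,\tt^{-1}]$. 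Call $\sigma$ \emph{general} if $w\mapsto\langle w,\sigma\rangle$ is injective on the finite set $\{w:a_w\neq0\}\cup\{w:b_w\neq0\}$; this holds for all $\sigma$ outside a finite union of hyperplanes in $\Lambda^\vee\otimes\R$, hence on a dense open set. For a nonzero $c=\sum_w c_wt^w$ and general $\sigma$, a \emph{single} character $w^-$ minimises $\langle w,\sigma\rangle$ over $\{w:c_w\neq0\}$ — namely the vertex of $\Ne^{\T}(c)$ at which $\langle-,\sigma\rangle$ is minimal — so in $c_{|\sigma}=\sum_k\gamma_k\tt^k$ the lowest nonzero coefficient is $\gamma_m=c_{w^-}\neq0$, with no cancellation, and $m=\min_{x\in\Ne^{\T}(c)}\langle x,\sigma\rangle=:h_{\Ne^{\T}(c)}(\sigma)$. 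Thus $\operatorname{val}_\tt(c_{|\sigma})$ equals the lower support function of $\Ne^{\T}(c)$ in the direction $\sigma$, for every general $\sigma$.

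\emph{The equivalence.} Because $b$ is invertible in the localised $K_\T(Y)[y]$, so is $b_{|\sigma}$, restriction being compatible with the localisation; factoring $b_{|\sigma}=\tt^{h_{\Ne^{\T}(b)}(\sigma)}\tilde b$ with $\tilde b(0)$ invertible gives $\operatorname{val}_\tt(b_{|\sigma}^{-1})=-h_{\Ne^{\T}(b)}(\sigma)$, hence
$$\operatorname{val}_\tt\!\left(\frac{a_{|\sigma}}{b_{|\sigma}}\right)=h_{\Ne^{\T}(a)}(\sigma)-h_{\Ne^{\T}(b)}(\sigma),$$
with leading coefficient $a_{w^-_a}\,\tilde b(0)^{-1}\neq0$. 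Consequently, for general $\sigma$, the limit $\lim_{\tt\to0}a_{|\sigma}/b_{|\sigma}$ exists (and then lies in $K(Y)[y]$) if and only if $h_{\Ne^{\T}(a)}(\sigma)\ge h_{\Ne^{\T}(b)}(\sigma)$. On the other hand, the classical support-function criterion says that for polytopes $P,Q$ one has $P\subseteq Q$ iff $h_P(\sigma)\ge h_Q(\sigma)$ for all $\sigma$; since $h_P,h_Q$ are continuous, since general $\sigma$ are dense, and since a failure $p\in P\setminus Q$ is detected by a separating $\sigma$ which may be perturbed into the general locus while keeping the strict inequality, this is equivalent to $h_P(\sigma)\ge h_Q(\sigma)$ for all general $\sigma$. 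Combining the two dictionaries: condition (2) holds exactly when $h_{\Ne^{\T}(a)}\ge h_{\Ne^{\T}(b)}$ on all general $\sigma$, which holds exactly when $\Ne^{\T}(a)\subseteq\Ne^{\T}(b)$, i.e.~condition (1).

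\emph{Main obstacle.} The conceptual skeleton above is short, and two points absorb the real work. First, the no-cancellation statement: the extreme $\tt$-coefficient of $c_{|\sigma}$ survives precisely because a general $\sigma$ separates the finitely many characters of $a$ and $b$, so the minimiser of $\langle-,\sigma\rangle$ on the support is a single vertex and $c_{w^-}\neq0$ is not destroyed; this is where the genericity hypothesis is used, and it is the step I expect to need the most attention. Second, the coefficient ring $K(Y)[y]$ need not be a domain, so one must justify that $b_{|\sigma}$ has $\tt$-valuation exactly $h_{\Ne^{\T}(b)}(\sigma)$ with invertible leading term, that valuations behave additively at the bottom, and that the limit, when it exists, is genuinely integral; here one uses that $b$ is invertible in the localised ring, that restriction along $\sigma$ respects that localisation, and the conventions of \cite{FRW} for the meaning of ``the localised $K_\T(Y)[y]$'' and of ``$\lim_{\tt\to0}$''. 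Everything remaining is routine bookkeeping with convex polytopes.
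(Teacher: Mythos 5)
Your strategy is the right one, and it is in fact more than the paper supplies: the paper's own ``proof'' is a citation of \cite[Proposition 3.6]{FRW}, which treats $Y=pt$, plus the assertion that the generalization is straightforward; your support-function dictionary is exactly the argument used there. The direction $(2)\Rightarrow(1)$ --- the only one the paper ever uses --- is sound, and actually needs less than you invoke: if $\Ne^{\T}(a)\not\subseteq\Ne^{\T}(b)$, pick a vertex $w_0$ of $\Ne^{\T}(a)$ outside $\Ne^{\T}(b)$ and a separating generic $\sigma$; then $\mathrm{val}_{\tt}(a_{|\sigma})=\langle w_0,\sigma\rangle$ is strictly smaller than $\mathrm{val}_{\tt}(b_{|\sigma})$ (no cancellation for generic $\sigma$), whereas existence of the limit would give $\mathrm{val}_{\tt}(a_{|\sigma})=\mathrm{val}_{\tt}\bigl((a/b)_{|\sigma}\cdot b_{|\sigma}\bigr)\ge 0+\mathrm{val}_{\tt}(b_{|\sigma})$ by mere sub-additivity of $\tt$-adic valuations, with no invertibility of leading terms needed.

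The genuine gap is in $(1)\Rightarrow(2)$, at precisely the step you flag and then wave through: the factorization $b_{|\sigma}=\tt^{h_{\Ne^{\T}(b)}(\sigma)}\tilde b$ with $\tilde b(0)$ \emph{invertible}. Invertibility of $b$ in $S^{-1}K_\T(Y)[y]$ does not imply that the coefficient of $b$ at the $\sigma$-minimal vertex of $\Ne^{\T}(b)$ is a unit of $K(Y)[y]$, because $K(Y)$ may be non-reduced: nilpotent coefficients count for the Newton polytope as defined but are invisible to invertibility and to limits. Concretely, take $Y=\P^1$ with trivial $\T=\C^*$-action and $\xi=[\O(1)]$, so $(\xi-1)^2=0$, and set $a=1$, $b=(\xi-1)+t$. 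Then $b$ is invertible already in $K_\T(Y)[y]$, with $b^{-1}=t^{-1}-(\xi-1)t^{-2}$, and $\Ne^{\T}(b)=[0,1]\supseteq\{0\}=\Ne^{\T}(a)$; yet for every $\sigma$ of positive degree $\lim_{\tt\to 0}(a_{|\sigma}/b_{|\sigma})$ does not exist. So $(1)\Rightarrow(2)$ fails at the stated level of generality, and no bookkeeping will close your step as written --- this is exactly where the passage from $Y=pt$ (where $K(pt)[y]=\Z[y]$ is a domain and your argument does force the extremal coefficient of $b$ to be a unit) to general $Y$ is not straightforward. What rescues the lemma in its actual use is that $b$ is always $eu(\nu(F\subset Y))$, whose vertex coefficients are $\pm$ determinants of subbundles and hence units of $K(F)$. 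You should either add the hypothesis that the vertex coefficients of $b$ are invertible in $K(Y)[y]$ (equivalently, compute $\Ne^{\T}(b)$ modulo the nilradical), or state and prove only the implication $(2)\Rightarrow(1)$.
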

For the sake of brevity we will write $\lim_\sigma$ meaning the restriction to $\sigma$.
Here by the localized K-theory we mean $S^{-1}K_\T(Y)[y]$, where $S\subset K_\T(pt)$ is generated by elements of the form $1-L$, where $L$ is one dimensional representation of $\T$. The lemma for $Y=pt$ is given in \cite[Proposition 3.6]{FRW}. The generalization given here is straightforward.

\bigskip

The above lemma implies that to prove the inclusion (\ref{wyr:3})  it is enough to show that for a general enough one dimensional subtorus $\sigma:\C^*\to \T$ the limit of a single summand occurring in the equation (\ref{wyr:1})
exists. Using commutation of the limit map with push-forwards (for maps between spaces with the trivial action of $\T$)
we only need to prove the lemma:

\begin{lemma}\label{lem:limit-frac}Let $f:(Y,\partial Y)\to (X,\partial X)$ and $\Delta$  be as in Definition \ref{def-twisted}. Let   $Y^o=Y\setminus \partial Y$, $D=f^*\Delta$. Then the limit
	\begin{align*}
	\lim_\sigma\frac{t^{-\we_F(D)}\,\mC\left(Y,\partial Y;D\right)_{|F}}{eu\left(\nu(F\subset Y)\right)} \in K(F)[y]
	\end{align*}
	exists for a general one parameter subgroup.\end{lemma}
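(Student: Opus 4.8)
The plan is to reduce the quotient to a fully localized product which is visibly convergent for generic $\sigma$. Since $Y$ is already smooth, Definition~\ref{def-twisted} reads $\mC(Y,\partial Y;D)=\O_Y(\lceil D\rceil)\cdot\mC(Y^o\subset Y)$, and by the logarithmic description of the motivic Chern class recalled in \S\ref{notacja} we have $\mC(Y^o\subset Y)=\O_Y(-\partial Y)\otimes\lambda_y\Omega^1_Y(\log\partial Y)$. Hence everything comes down to the $\T$-equivariant $K$-theory class along $F$ of $\Omega^1_Y(\log\partial Y)$ and of the two line bundles $\O_Y(-\partial Y)$ and $\O_Y(\lceil D\rceil)$.

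First I would analyse the normal geometry along $F$. Let $D_1,\dots,D_r$ be the components of $\partial Y$ containing $F$; each restricts to a line bundle $L_k=\O_Y(D_k)_{|F}$ of non-zero weight $w_k=\we_F(\O_Y(D_k))$, and $L_k^{*}=\beta_k^{-1}t^{-w_k}$ (with $\beta_k\in Pic(F)$) is a line sub-bundle of the conormal bundle $\nu(F\subset Y)^{*}$. Because every component of $\partial Y$ is $\T$-invariant while $F$ is a fixed component, any $D_i$ with $F\not\subset D_i$ is automatically transverse to $F$, so it contributes only weight-$0$ line bundles on $F$ and at most a weight-$0$ logarithmic modification of $\mC(F)$; in particular, writing $D=\sum_{k=1}^r c_kD_k+(\text{transverse part})$, one has $\we_F(D)=\sum_k c_kw_k$. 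The heart of the argument is then the residue sequence $0\to\Omega^1_Y\to\Omega^1_Y(\log\partial Y)\to\bigoplus_i\O_{D_i}\to0$, restricted to $F$, which gives in $K_\T(F)$ an identity
$$[\Omega^1_Y(\log\partial Y)_{|F}]=[\Omega^1_F]+r\,[\O_F]+\sum_{j}\gamma_j\,t^{-v_j}+(\text{weight }0),\qquad \gamma_j\in Pic(F),\ v_j\neq0,$$
where the summand $r\,[\O_F]$ is precisely the effect of a logarithmic pole along $D_k$ turning the weight-$(-w_k)$ conormal direction into a weight-$0$ one. Substituting this, together with $\O_Y(-\partial Y)_{|F}=\prod_k L_k^{-1}\cdot(\text{weight }0)$, $\O_Y(\lceil D\rceil)_{|F}=\prod_k L_k^{\lceil c_k\rceil}\cdot(\text{weight }0)$ and $eu(\nu(F\subset Y))=\prod_k(1-\beta_k^{-1}t^{-w_k})\cdot\prod_j(1-\gamma_j t^{-v_j})$, and writing $L_k=\beta_k\,t^{w_k}$, I arrive at
$$\frac{t^{-\we_F(D)}\,\mC(Y,\partial Y;D)_{|F}}{eu(\nu(F\subset Y))}=\mC(F)\cdot(\text{weight }0)\cdot\prod_{k=1}^{r}\frac{(1+y)\,\beta_k^{\lceil c_k\rceil-1}\,t^{(\lceil c_k\rceil-1-c_k)w_k}}{1-\beta_k^{-1}t^{-w_k}}\cdot\prod_{j}\frac{1+y\gamma_j t^{-v_j}}{1-\gamma_j t^{-v_j}}.$$

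Finally I would pass to the limit. Take $\sigma$ generic enough that $\langle\sigma,w_k\rangle\neq0$ and $\langle\sigma,v_j\rangle\neq0$ for all $k,j$ (finitely many linear conditions), restrict to $\sigma$ and let $\tt\to0$. Each factor $\tfrac{1+y\gamma_j\tt^{n}}{1-\gamma_j\tt^{n}}$, $n=-\langle\sigma,v_j\rangle$, tends to $1$ if $n>0$ and to $-y$ if $n<0$. For the first product one uses $\lceil c_k\rceil-1-c_k\in[-1,0)$ and $\lceil c_k\rceil-c_k\in[0,1)$: a short case analysis on the sign of $\langle\sigma,w_k\rangle$ and on whether $c_k$ is an integer shows that each factor either tends to $0$ — after cancelling leading terms the ratio carries a strictly positive power of $\tt$, thanks to $\lceil c_k\rceil-c_k<1$ — or, when $c_k\in\Z$ and $\langle\sigma,w_k\rangle>0$, tends to $-(1+y)\,\beta_k^{\lceil c_k\rceil}$. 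Since $\mC(F)$ and the weight-$0$ factors are fixed elements of $K(F)[y]$, the product converges in $K(F)[y]$, which proves the lemma.

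The main obstacle is the middle step: correctly reading off the $\T$-equivariant structure of the logarithmic cotangent bundle along a possibly positive-dimensional fixed component $F$ — especially the conversion of the weight-$(-w_k)$ conormal direction into a weight-$0$ one, which generates both the harmless $(1+y)^r$ and the crucial exponent shift $\lceil c_k\rceil-1-c_k$ — and checking that the components of $\partial Y$ not containing $F$, which may meet $F$ in positive dimension, are forced by equivariance to be transverse and therefore contribute nothing that affects the limit. Once the displayed product formula is established the limit is an elementary check, and the whole content of the lemma is the two-sided bound $0\le\lceil c_k\rceil-c_k<1$, which is exactly what makes the round-up twist $\O_Y(\lceil D\rceil)$ compatible with the Euler-class denominator $eu(\nu(F\subset Y))$.
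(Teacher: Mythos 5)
Your argument is correct in substance, but it takes a genuinely different route from the paper. The paper never computes $\mC(Y^o\subset Y)_{|F}$ explicitly: it first uses the convexity lemma (Lemma \ref{lem:conv} combined with Lemma \ref{lem:prostowanie}) to replace the fractional twist $t^{-\we_F(D)}\O_Y(\lceil D\rceil)_{|F}$ by the integral products $L_I=\prod_{j\in I}\O_Y(D_j)_{|F}$, whose weights contain the given weight in their convex hull; then, via the adjunction identity \eqref{obciecie} (Lemma \ref{lem:SNC}, Corollary \ref{cor:SNC}), it rewrites $L_I\cdot\mC(Y^o\subset Y)_{|F}$ as $(1+y)^{|I|}\mC(D_I^o\subset D_I)_{|F}$ and invokes the general Bia{\l}ynicki-Birula limit theorem for motivic Chern classes (\cite[Theorem 4.2]{FRW}, cf. \cite{WeBB,K1}), which gives both existence and a geometric identification of the limit as $\mC\left((D_I^o)_F^{\sigma+}\to F\right)$. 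You instead expand everything from the logarithmic description $\mC(Y^o\subset Y)=\O_Y(-\partial Y)\otimes\lambda_y\Omega^1_Y(\log\partial Y)$ and check convergence factor by factor. Your route is more elementary and self-contained (no appeal to the limit theorem of \cite{FRW}), and it makes visible exactly where the bounds $0\le\lceil c_k\rceil-c_k<1$ enter; what it loses is the closed geometric form of the limit, which the paper gets for free.

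Two points in your middle step need repair, though both are repairable. First, for components $D_i$ with $F\not\subset D_i$ you should not argue via transversality to $F$: transversality is neither automatic nor needed. What you need, and what is true, is only that $\we_F(\O_Y(D_i))=0$, because the weight is locally constant along $F$ and vanishes at points of $F$ outside $D_i$ (this is the small lemma in the paper); consequently the residue terms $1-[\O_Y(-D_i)_{|F}]$ and the line-bundle factors coming from such components are of pure weight zero. Second, when $\dim F>0$ the nonzero-weight pieces of $\Omega^1_Y(\log\partial Y)_{|F}$ and of $\nu(F\subset Y)$ need not be line bundles. The correct statement, obtained from the residue sequence together with the conormal sequence of $F\subset Y$ and the SNC splitting $[\nu^*(F\subset Y)]=\sum_{k}[\O_Y(-D_k)_{|F}]+[\nu^*(F\subset D_I)]$, is that the nonzero-weight part of $[\Omega^1_Y(\log\partial Y)_{|F}]$ equals $[\nu^*(F\subset D_I)]$; so your last product should be read as $\lambda_y\left(\nu^*(F\subset D_I)\right)/\lambda_{-1}\left(\nu^*(F\subset D_I)\right)$, whose restriction to a generic $\sigma$ still converges (each weight piece tends to $1$ or to $(-y)^{\rank}$), while the weight-zero part, grouped as the honest weight-zero subbundle rather than as individual virtual factors, is a $t$-independent element of $K(F)[y]$. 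With these adjustments your product formula and the elementary limit analysis go through and prove the lemma.
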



Below we state two lemmas concerning behaviour of limits.
\begin{lemma}\label{lem:prostowanie}
	Consider a  variety $F$ with the trivial action of $\T$. Let $a\in S^{-1}K_\T(F)$ and let  $\L\in Pic^\T(F)$ be an equivariant line bundle.  For any one parameter subtorus $\sigma:\C^*\to\T$ the limit
	$\lim_\sigma (\L\cdot a) $ exists if and only if the limit
	$\lim_\sigma (t^{\we_F(\L)}\cdot a)$ exists.
\end{lemma}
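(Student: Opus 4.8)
The plan is to reduce the statement to the elementary observation that a one-parameter subgroup $\sigma\colon\C^*\to\T$ sends an equivariant line bundle $\L$ on $F$ to a line bundle of the form $t^{\we_F(\L)}\L_0$, where $\L_0=\sigma^*\L$ carries the \emph{trivial} $\C^*$-action (since $\C^*$ now acts trivially on $F$ and on the weight part, the linearization splits off the character $\we_F(\L)$ composed with $\sigma$ as a scalar factor). Concretely, after restricting to $\sigma$ we may write $\L_{|\sigma}=t^{\langle\we_F(\L),\sigma\rangle}\cdot\L_0$ in $K_{\C^*}(F)=K(F)[t,t^{-1}]$, where $\L_0\in K(F)$ is the underlying non-equivariant class of $\L$ (which is a \emph{unit} in $K(F)$, being a line bundle). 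Here I am using the splitting $K_\T(F)\simeq K(F)\otimes\Hom(\T,\C^*)$ valid for trivial torus action, together with the fact that passing from $\T$ to the image of $\sigma$ only records the pairing of weights with $\sigma$.

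First I would record that multiplication by a non-equivariant unit $u\in K(F)^\times$ commutes with the limit operation: $\lim_\sigma(u\cdot\beta)$ exists iff $\lim_\sigma\beta$ exists, and in that case equals $u\cdot\lim_\sigma\beta$, because $u$ does not involve the variable $t$ at all. Applying this with $u=\L_0$ gives that $\lim_\sigma(\L\cdot a)=\lim_\sigma(t^{\langle\we_F(\L),\sigma\rangle}\L_0\cdot a)$ exists iff $\lim_\sigma(t^{\langle\we_F(\L),\sigma\rangle}\cdot a)$ exists. But $t^{\langle\we_F(\L),\sigma\rangle}$ is exactly the restriction to $\sigma$ of $t^{\we_F(\L)}$ (the latter viewed as a character of a suitable cover of $\T$, acting by the displayed power on $F$ with trivial action). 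Hence $\lim_\sigma(t^{\langle\we_F(\L),\sigma\rangle}\cdot a)$ exists iff $\lim_\sigma(t^{\we_F(\L)}\cdot a)$ exists, which is the desired equivalence.

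The one point requiring a little care — and what I expect to be the only mild obstacle — is justifying the splitting $\L_{|\sigma}=t^{\langle\we_F(\L),\sigma\rangle}\L_0$ at the level of equivariant $K$-theory rather than just fiberwise at $\T$-fixed points; since $\C^*$ (the image of $\sigma$) acts trivially on $F$, an equivariant line bundle for a trivially-acting group is just a non-equivariant line bundle tensored with a character, and the character is forced to be $\langle\we_F(\L),\sigma\rangle$ because that is the weight read off at any point. After this, everything is formal: the limit $\lim_\sigma$ is taken in $S^{-1}K_\T(F)[y]$ and is insensitive to multiplication by units of $K(F)$, so the equivalence follows immediately. No resolution of singularities, no functoriality of $\mC$, and no hard geometry enter here; this lemma is purely a bookkeeping step isolating the role of the line-bundle twist, to be combined later with Lemma \ref{lem:FRW} in the proof of Lemma \ref{lem:limit-frac}.
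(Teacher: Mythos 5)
Your argument is correct and is essentially the paper's own proof, just spelled out in more detail: the paper likewise factors $\L=t^{\we_F(\L)}\cdot\bigl(t^{-\we_F(\L)}\L\bigr)$ and observes that multiplication by the bundle with trivial character (your unit $\L_0$) commutes with the limit map. The extra care you take in justifying the splitting for the trivially-acting image of $\sigma$ is fine and matches the paper's implicit use of $K_\T(F)\simeq K(F)\otimes\Hom(\T,\C^*)$ for connected $F$.
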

\begin{proof}
	The multiplication by the line bundle with trivial character
	$t^{-\we_F(\L)}\,\L$ commutes with the limit map.\end{proof}

\begin{lemma} \label{lem:conv}
	Consider a  variety $F$ with the trivial action of a torus $\T$ and an element of the localized $K$-theory $\alpha\in S^{-1}K_\T(F)[y]$.
	For a given  fractional weight $w$ suppose that there are characters $w_1,...,w_m$ such that
	$$w \in conv(w_1,...,w_m).$$
	Moreover suppose that for a chosen one dimensional subtorus all of the limits $\lim_\sigma t^{w_i}\alpha$
	exist. Then the limit
	$\lim_\sigma t^w\alpha$
	exist as well.
\end{lemma}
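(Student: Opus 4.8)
The plan is to reduce everything to an elementary statement about orders of vanishing of Laurent series in the coordinate $\tt$ on the chosen one–parameter subgroup, and then to invoke nothing more than the fact that a convex combination of real numbers is at least their minimum. Since the variable $y$ and the space $F$ play no role, I will simply regard $\alpha_{|\sigma}$ as a Laurent series with coefficients in $K(F)[y]$.

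First I would recall what ``$\lim_\sigma\beta$ exists'' means for $\beta\in S^{-1}K_\T(F)[y]$. Restriction along $\sigma\colon\C^*\to\T$ sends a monomial $t^{w'}$ to $\tt^{\langle w',\sigma\rangle}$ and sends a generator $1-L\in S$ to $1-\tt^{k}$ for a nonzero integer $k$; that all these $k$ are nonzero is exactly the condition making $\beta_{|\sigma}$ defined, and it is already implicit in the hypothesis that the limits $\lim_\sigma t^{w_i}\alpha$ exist. Expanding each factor $1/(1-\tt^{k})$ as a power series in $\tt$ with non-negative exponents — which is legitimate for either sign of $k$ — exhibits $\beta_{|\sigma}$ as an element of $K(F)[y]((\tt))$, a Laurent series with only finitely many negative exponents. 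Then $\lim_\sigma\beta$ exists precisely when no negative exponent appears, i.e.\ when the $\tt$-adic order $\operatorname{ord}_\tt(\beta_{|\sigma})\ge 0$, the limit being the coefficient of $\tt^{0}$. The key point is that multiplication by an invertible monomial shifts the order additively and without interference from zero-divisors in $K(F)[y]$: $\operatorname{ord}_\tt\bigl((t^{w'}\alpha)_{|\sigma}\bigr)=\langle w',\sigma\rangle+\operatorname{ord}_\tt(\alpha_{|\sigma})$.

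Now the argument is pure convexity. Set $d_i=\langle w_i,\sigma\rangle\in\Z$ and $d=\langle w,\sigma\rangle\in\Q$. By hypothesis $\lim_\sigma t^{w_i}\alpha$ exists for every $i$, hence $d_i+\operatorname{ord}_\tt(\alpha_{|\sigma})\ge 0$, that is $\operatorname{ord}_\tt(\alpha_{|\sigma})\ge-\min_i d_i$. Writing $w=\sum_i\lambda_i w_i$ with $\lambda_i\ge 0$ and $\sum_i\lambda_i=1$, and pairing with $\sigma$, we get $d=\sum_i\lambda_i d_i\ge\min_i d_i$. Therefore $\operatorname{ord}_\tt\bigl((t^{w}\alpha)_{|\sigma}\bigr)=d+\operatorname{ord}_\tt(\alpha_{|\sigma})\ge d-\min_i d_i\ge 0$, so $\lim_\sigma t^{w}\alpha$ exists. (If $d\notin\Z$ the inequality $d>\min_i d_i$ is strict, the order is positive, and the limit equals $0$; if $d\in\Z$ the limit is the $\tt^{0}$-coefficient.)

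I do not expect a real obstacle here: the only genuine content is the elementary inequality $\langle w,\sigma\rangle\ge\min_i\langle w_i,\sigma\rangle$ for $w$ in the convex hull of the $w_i$. The one thing that requires a little care is the bookkeeping of the phrase ``$\lim$ exists'' in terms of $\tt$-adic order over the coefficient ring $K(F)[y]$, which is not a domain — one must check that the denominators of $\alpha$ expand into power series with unit constant term and that orders add under multiplication by monomials. This is routine and entirely parallel to the conventions already used around Lemma~\ref{lem:FRW} and Lemma~\ref{lem:prostowanie}.
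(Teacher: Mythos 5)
Your proposal is correct and takes essentially the same route as the paper, whose proof is just the one-line remark that one compares the coefficients of the numerator and denominator of $\alpha$ restricted to the one-parameter subgroup: your $\tt$-adic order bookkeeping for the Laurent expansion along $\sigma$, together with the convexity inequality $\langle w,\sigma\rangle\ge\min_i\langle w_i,\sigma\rangle$, is precisely that comparison carried out in detail. Nothing further is needed.
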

The proof  is fairly elementary. We just compare the coefficients of the numerator and denominator of $\alpha$ restricted to one dimensional torus.
\bigskip

\noindent{\it Proof of Lemma \ref{lem:limit-frac}.}
Suppose that $\partial Y$ consists of $m$ components
$D_1,D_2,\dots, D_m$.
For a subset of indices $I \subset\{1,...,m\}$ consider the bundle:
$$ L_I=\prod_{j\in I} \O_Y(D_j)_{|F}$$
Let $D=\sum \lambda_k D_k$ be a $\Q$ divisor. The weight of the bundle {$L=t^{-\we_F(D)}\,\O_Y(\lceil D\rceil)_{|F}$} lies in the convex hull of the weight of the bundles $L_I$.
By the  Lemmas \ref{lem:prostowanie} and \ref{lem:conv} it is enough to show that the limits
\begin{align} \label{wyr:2}
\lim_\sigma
\frac{
	L_I\cdot\mC(Y^o\subset Y)_{|F}}{eu\left(\nu(F\subset Y)\right)}=
\lim_\sigma\frac{\mC\left(Y^o\subset Y\right)_{|F}}{eu\left(\nu(F\subset Y)\right)}\prod_{j\in I} \O_Y(D_j)_{|F}
\end{align}
exist for  a general one parameter subgroup.
The following simple observation allows to remove some factors from the above expression.
\begin{lemma}
	Consider a component $D_j\subset \partial Y$. Let $F \subset Y^\T$ be a component of the fixed point set. Suppose that $F$ is not contained in $D_j$. Then the weight of the bundle $\O_Y(D_j)$ at $F$ is trivial
	$$\we_F(\O_Y(D_j))=0\,.$$
\end{lemma}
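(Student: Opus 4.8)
First I would observe that it is enough to compute $\we_F(\O_Y(D_j))$ at a single fixed point. By Section~\ref{notacja1} the function $p\mapsto\we_p(\O_Y(D_j))$ is locally constant on $Y^\T$, and $F$ is a connected component of $Y^\T$, so $\we_F(\O_Y(D_j))=\we_p(\O_Y(D_j))$ for every $p\in F$.

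Next I would choose the point. Since $D_j$ is a prime divisor and $F\not\subset D_j$ by hypothesis, $F\setminus D_j$ is a nonempty open subset of $F$; pick any fixed point $p$ in it. Then $p$ does not belong to the support of $D_j$, and the description of the natural linearization of $\O_Y(D_j)$ recalled in Section~\ref{notacja1} — it is the one coming from the identification $Pic^\T(Y)\simeq A^{\T}_{\dim Y-1}(Y)$, for which the canonical section cutting out $D_j$ is $\T$-invariant — shows $\we_p(\O_Y(D_j))=0$. Concretely, near such a $p$ that invariant section is nowhere vanishing, hence it trivializes $\O_Y(D_j)$ equivariantly on a neighbourhood of $p$, which forces the fibre weight there to vanish. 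Combining the two steps, $\we_F(\O_Y(D_j))=\we_p(\O_Y(D_j))=0$, which is the claim.

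The only point that requires a word of care is the reduction itself, namely that the hypothesis $F\not\subset D_j$ genuinely produces a fixed point of $F$ off $D_j$: a connected component of $Y^\T$ is either contained in $D_j$ or meets it in a proper closed subvariety, so once $F\not\subset D_j$ we are in the latter case and $F\setminus D_j\neq\varnothing$. After that the conclusion is immediate from the normalization of the equivariant structure on $\O_Y(D_j)$, so I do not expect any genuine obstacle here.
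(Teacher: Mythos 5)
Your argument is correct and is essentially the paper's own proof: pick a fixed point of $F$ lying off the support of $D_j$ (possible since $F\not\subset D_j$), note the natural linearization makes the weight vanish there, and use local constancy of the weight on the component $F$. The extra justification via the invariant canonical section is just an unpacking of the normalization already recalled in Section \ref{notacja1}.
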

\begin{proof}
	The intersection $(Y\setminus D_j)\cap F$ is not empty, hence  in a point not belonging to $D_j$ the weight of $\O_Y(D_j)$ is trivial. The  weight does not depend on the choice of a point in $F$.
\end{proof}

From the above lemma and from Lemma \ref{lem:prostowanie} it follows that when the variety $F$ is not contained in $D_j$ we may omit the bundle $\O_Y(D_j)$ in the expression (\ref{wyr:2}). The existence of the considered limit follows from the next  two following lemmas.
\begin{lemma} \label{lem:SNC}
	Consider a smooth quasiprojective $\T$-variety $Y$. Let $D_1,...,D_m$ be smooth, equivariant, codimension one subvarieties with SNC intersections. Consider a subset \hbox{$I\subset\{1,...,m\}$}. Denote by \begin{itemize}
		\item $Y^o=Y\setminus \bigcup_{{j=1}}^m D_j$,
		\item $D_I=\bigcap_{j\in I} D_j$,  $D_I^o=D_I\setminus \bigcup_{j\notin I} D_j$,
	\end{itemize}
	There is an equality:
	$$\mC\left(Y^o\subset Y\right)_{|D_I} =
	(1+y)^{|I|}\prod_{j \in I} \O_Y(-D_j)_{|D_I}\,\mC\left(D_I^o\subset D_I\right)
	.$$
\end{lemma}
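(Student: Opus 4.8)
\emph{Proof idea.} Write $\partial Y=\bigcup_{j=1}^{m}D_j$. The plan is to reduce the statement to the case $|I|=1$ and then to settle that case by computing the restriction of the logarithmic cotangent sheaf of $(Y,\partial Y)$ to a single boundary component. So suppose first that for every simple normal crossing configuration and every component $D_k$ of the boundary one has
\[
\mC(Y^o\subset Y)|_{D_k}=(1+y)\,\O_Y(-D_k)|_{D_k}\cdot\mC(D_k^o\subset D_k),
\]
where $D_k^o=D_k\setminus\bigcup_{j\ne k}D_j$ and $D_k$ carries the induced SNC divisor $\bigcup_{j\ne k}(D_j\cap D_k)$. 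Given $I$, relabel the components so that $I=\{1,\dots,p\}$, restrict $\mC(Y^o\subset Y)$ to $D_1$ by this formula, then restrict the resulting class on $D_1$ to $D_1\cap D_2$ by the same formula on $D_1$, and continue, peeling off $D_1,\dots,D_p$ one at a time. At the $k$-th step the conormal line bundle of the component being restricted to is the restriction of $\O_Y(D_k)$, so after $p$ steps one collects exactly the factor $(1+y)^{p}\prod_{j\in I}\O_Y(-D_j)|_{D_I}$ in front of $\mC(D_I^o\subset D_I)$, which is the assertion. Hence it suffices to treat $I=\{k\}$.

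\emph{The case $I=\{k\}$.} Put $\partial D_k=\bigcup_{j\ne k}(D_j\cap D_k)$. By the logarithmic-form description of the motivic Chern class recalled in \S\ref{notacja} (following \cite{MS}), applied to $(Y,\partial Y)$ and to $(D_k,\partial D_k)$, and using $\O_Y(-\partial Y)|_{D_k}=\O_Y(-D_k)|_{D_k}\otimes\O_{D_k}(-\partial D_k)$, the claimed equality is equivalent to the $K$-theoretic identity
\[
[\,\Omega^1_Y(\log\partial Y)|_{D_k}\,]=[\,\Omega^1_{D_k}(\log\partial D_k)\,]+[\O_{D_k}]\qquad\text{in }K_\T(D_k),
\]
since applying $\lambda_y$ to both sides multiplies the right-hand side by the factor $(1+y)$ coming from the $[\O_{D_k}]$ summand while all remaining factors match up as above. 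To obtain this identity I would combine two standard equivariant exact sequences. Tensoring the residue sequence
\[
0\longrightarrow\Omega^1_Y(\log(\partial Y-D_k))\longrightarrow\Omega^1_Y(\log\partial Y)\longrightarrow(\iota_k)_*\O_{D_k}\longrightarrow 0,\qquad \iota_k\colon D_k\hookrightarrow Y,
\]
with $\O_{D_k}$, the middle term being locally free, produces the four-term exact sequence
\[
0\to N^*_{D_k/Y}\to\Omega^1_Y(\log(\partial Y-D_k))|_{D_k}\to\Omega^1_Y(\log\partial Y)|_{D_k}\to\O_{D_k}\to 0,
\]
whose leftmost term is the torsion $\mathcal{T}or_1^{\O_Y}(\O_{D_k},\O_{D_k})=N^*_{D_k/Y}=\O_Y(-D_k)|_{D_k}$. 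On the other hand, since $D_k$ meets the remaining components transversally, one also has the logarithmic conormal sequence
\[
0\to N^*_{D_k/Y}\to\Omega^1_Y(\log(\partial Y-D_k))|_{D_k}\to\Omega^1_{D_k}(\log\partial D_k)\to 0.
\]
Subtracting these two relations in $K_\T(D_k)$, the contributions of $N^*_{D_k/Y}$ cancel and the displayed identity drops out, and then the reduction step above finishes the proof.

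\emph{Main obstacle.} The whole weight of the lemma sits in the case $|I|=1$, and within it in the cancellation of $N^*_{D_k/Y}$ between the restricted residue sequence and the logarithmic conormal sequence; everything else is bookkeeping with $\mC$-classes, line bundles and the projection formula, and the equivariance is automatic since all the sheaf maps involved (residue, restriction of forms) are canonical. For completeness I note that one may avoid logarithmic forms entirely and prove the $|I|=1$ case by an auxiliary induction on $m$: the base case $m=1$ is exactly formula \eqref{obciecie}, and the inductive step uses the additivity and proper functoriality of $\mC$, the projection formula, and transverse base change to handle the extra strata lying in $\bigcup_{j\ne k}D_j$.
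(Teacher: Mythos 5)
Your argument is correct, but it runs along a different track than the paper's. The paper disposes of Lemma \ref{lem:SNC} in one line: it iterates formula \eqref{obciecie} (the restriction of $\mC$ of the complement of a single smooth divisor), applied to $Z=Y$ and to the intersections of the divisors $D_k$ with $k\notin I$, together with additivity/inclusion–exclusion over the strata of the boundary. You instead take the logarithmic route: writing $\mC(Y^o\subset Y)=\O_Y(-\partial Y)\otimes\lambda_y\Omega^1_Y(\log \partial Y)$ (the \cite{MS} formula quoted in \S1), the case $|I|=1$ reduces to the $K$-theoretic identity $[\Omega^1_Y(\log\partial Y)|_{D_k}]=[\Omega^1_{D_k}(\log\partial D_k)]+[\O_{D_k}]$, which you correctly extract by comparing the residue sequence (restricted to $D_k$, with the $\mathcal{T}or_1$-term $N^*_{D_k/Y}$) with the logarithmic conormal sequence, the two copies of $N^*_{D_k/Y}$ cancelling; the general case then follows by peeling off the components of $I$ one at a time, using $\O_{D_1}(D_1\cap D_j)=\O_Y(D_j)|_{D_1}$ and $\O_Y(-\partial Y)|_{D_k}=\O_Y(-D_k)|_{D_k}\otimes\O_{D_k}(-\partial D_k)$, both of which hold by transversality as you note. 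What each approach buys: your version handles the full SNC boundary in a single step per component, avoiding the stratum-by-stratum inclusion–exclusion, at the price of invoking the logarithmic expression for $\mC$ and checking (automatic, since the residue and conormal maps are canonical) that the sequences are equivariant; the paper's version stays entirely within the elementary identities \eqref{roznica}–\eqref{obciecie} already established in \S4. Your closing remark — base case \eqref{obciecie} plus induction using additivity and proper functoriality — is essentially the paper's own proof, so you have in effect supplied both arguments.
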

\begin{proof}The proof is an application of
the formula \eqref{obciecie} several times, applied for $Z=Y$ or to intersections of  divisors $D_k$ with $k\not\in I$ .\end{proof}

\begin{corollary}\label{cor:SNC} For a smooth  subvariety $F\subset D_I$ we have
	$$\mC\left(Y^o\subset Y\right)_{|F}\prod_{j \in I} \O_Y(D_j)_{|F}=
	(1+y)^{|I|}\mC\left(D_I^o\subset D_I\right)_{|F}\,.$$
\end{corollary}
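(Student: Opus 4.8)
The plan is to read the Corollary off Lemma~\ref{lem:SNC} by a single restriction followed by a cancellation of line-bundle twists. Lemma~\ref{lem:SNC} provides the identity
$$\mC\left(Y^o\subset Y\right)_{|D_I} = (1+y)^{|I|}\Big(\prod_{j \in I} \O_Y(-D_j)_{|D_I}\Big)\,\mC\left(D_I^o\subset D_I\right)$$
in $\K(D_I)$. First I would pull this back along the inclusion $F\hookrightarrow D_I$. The restriction map in equivariant $K$-theory is a ring homomorphism, hence compatible with the tensor products on the right-hand side, so that
$$\mC\left(Y^o\subset Y\right)_{|F} = (1+y)^{|I|}\Big(\prod_{j \in I} \O_Y(-D_j)_{|F}\Big)\,\mC\left(D_I^o\subset D_I\right)_{|F}$$
in $\K(F)$.

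Next I would multiply both sides by $\prod_{j\in I}\O_Y(D_j)_{|F}$. Each $\O_Y(D_j)$ is an equivariant line bundle, hence an invertible class in equivariant $K$-theory, and the canonical isomorphism $\O_Y(D_j)\otimes\O_Y(-D_j)\cong\O_Y$ shows that $\prod_{j\in I}\O_Y(D_j)_{|F}$ and $\prod_{j\in I}\O_Y(-D_j)_{|F}$ are mutually inverse, with product the trivial class. The negative twists on the right therefore cancel, leaving
$$\mC\left(Y^o\subset Y\right)_{|F}\prod_{j \in I} \O_Y(D_j)_{|F} = (1+y)^{|I|}\,\mC\left(D_I^o\subset D_I\right)_{|F},$$
which is exactly the asserted formula.

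I do not expect any real obstacle here; the only point worth making explicit is that Lemma~\ref{lem:SNC} is an honest equality in $\K(D_I)$, so restricting it to $F$ and multiplying through by an invertible line-bundle class is legitimate and preserves equality. As an alternative to invoking Lemma~\ref{lem:SNC} as a black box, one could instead repeat its proof — an iterated use of formula~\eqref{obciecie}, applied with $Z=Y$ and with $Z$ replaced by the intersections of the components $D_k$ for $k\notin I$ — directly after restriction to $F$; this yields the same conclusion but is longer and less transparent.
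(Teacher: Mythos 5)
Your argument is correct and matches the paper's (implicit) derivation: Corollary~\ref{cor:SNC} is obtained there exactly by restricting the identity of Lemma~\ref{lem:SNC} along $F\hookrightarrow D_I$ and cancelling the invertible twists $\prod_{j\in I}\O_Y(-D_j)_{|F}$ against $\prod_{j\in I}\O_Y(D_j)_{|F}$. Nothing further is needed.
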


Finally we arrive {at} computation of the limit appearing in \eqref{wyr:2}.

\begin{lemma}
	Consider the situation such as in the previous Corollary \ref{cor:SNC}. Suppose that $F$ is a component of the fixed point set $Y^\T$.
	Consider one dimensional subtorus $\sigma:\C^*\to \T$ such that all normal weights to the subvariety $F$ are nonzero.
	Then the limit
	$$\lim_\sigma\frac{\mC\left(Y^o\subset Y\right)_{|F}}{eu(\nu(F\subset Y))}\prod_{j \in I} \O_Y(D_j)_{|F}$$
	exists. If all the weights of the line bundles $\O_Y(D_j)_{|F}$ are negative then the limit is equal to
	$$
	(1+y)^{|I|}\mC\left((D_I^o)_F^{\sigma+}\to F\right)\,,$$
	otherwise  the limit is equal to $0$.
	Here
	$$(D_I^o)_F^{\sigma+}=\{x\in D_I^o\;:\;{\lim_{\tt \to 0}\sigma(\tt)\cdot x\in F}\;\}\,.$$
\end{lemma}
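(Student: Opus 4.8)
The plan is to take the expression apart using Corollary~\ref{cor:SNC} and then compute the limit one factor at a time. First I would invoke Corollary~\ref{cor:SNC} to replace $\mC(Y^o\subset Y)_{|F}\prod_{j\in I}\O_Y(D_j)_{|F}$ by $(1+y)^{|I|}\mC(D_I^o\subset D_I)_{|F}$, so the class under the limit becomes $(1+y)^{|I|}\mC(D_I^o\subset D_I)_{|F}/eu(\nu(F\subset Y))$. Then, using the flag $F\subset D_I\subset Y$ of smooth varieties and the transversality of $D_I=\bigcap_{j\in I}D_j$, I would split the ambient Euler class as $eu(\nu(F\subset Y))=eu(\nu(F\subset D_I))\cdot\prod_{j\in I}\bigl(1-\O_Y(D_j)^{-1}_{|F}\bigr)$, since $\nu(D_I\subset Y)_{|F}=\bigoplus_{j\in I}\O_Y(D_j)_{|F}$.

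Next I would dispose of the factors indexed by $I$. For $j\in I$ we have $F\subset D_j$, so $w_j:=\we_F(\O_Y(D_j))$ is a genuine normal weight of $F$, hence nonzero by hypothesis; restricting $\O_Y(D_j)_{|F}$ to $\sigma$ and rationalising the fraction, an elementary calculation gives $\lim_\sigma 1/\bigl(1-\O_Y(D_j)^{-1}_{|F}\bigr)=1$ if $w_j<0$ and $=0$ if $w_j>0$. This reduces the whole statement to the case $I=\varnothing$: it remains to show that $\lim_\sigma \mC(D_I^o\subset D_I)_{|F}/eu(\nu(F\subset D_I))=\mC\bigl((D_I^o)_F^{\sigma+}\to F\bigr)$, which is exactly the assertion of the lemma for the smooth variety $D_I$, the simple normal crossing divisor $\bigcup_{j\notin I}(D_j\cap D_I)$, and the empty index set. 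Granting this together with the finiteness of that limit, if all $w_j<0$ the $I$-factors contribute $1$ and we get $(1+y)^{|I|}\mC\bigl((D_I^o)_F^{\sigma+}\to F\bigr)$, whereas if some $w_j>0$ the corresponding factor forces the limit to vanish, as claimed.

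The case $I=\varnothing$ is, I expect, the main obstacle. I would prove it by induction on the number $q$ of boundary components of $D_I$ that contain $F$. For the inductive step I would again use Lemma~\ref{lem:SNC} and then restrict to $F$ as in the proof of Corollary~\ref{cor:SNC}, peeling off those $q$ components: this rewrites $\mC(W^o\subset W)_{|F}$ (with $W=D_I$) as $(1+y)^q\prod_a\O_W(-B_a)_{|F}$ times the analogous quotient for the smaller space $W_B=\bigcap_a B_a\supseteq F$, which now carries no boundary component through $F$, while the Euler-class splitting for $F\subset W_B\subset W$ produces elementary factors $\O_W(-B_a)_{|F}/\bigl(1-\O_W(B_a)^{-1}_{|F}\bigr)$ whose $\sigma$-limits lie in $\{0,-1\}$, recording whether each normal direction at $F$ is attracting or repelling. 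The base case $q=0$ I would handle by restricting to a $\sigma$-invariant affine neighbourhood of $F$ (Proposition~\ref{prop:prod}(i)) and using the local product structure $W\cong\nu(F\subset W)=\nu^+\oplus\nu^-$ (no zero weights): additivity of $\mC$ and the Verdier--Riemann--Roch formula \cite[Theorem~4.2]{AMSS} then reduce the computation to the model limits $\lim_\sigma \lambda_y((\nu^-)^*)/eu(\nu^-)=1$ and $\lim_\sigma \lambda_y((\nu^+)^*)/eu(\nu^+)=(-y)^{\rk(\nu^+)}$ (all restricted to $F$), both instances of $\mC(\mathbb{A}^1\to\mathrm{pt})=-y$, which assemble into $\mC\bigl((W^o)_F^{\sigma+}\to F\bigr)$.

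The delicate point throughout this last step is the bookkeeping: one must keep track of which normal directions at $F$ are attracting and which are repelling as the divisors pass through $F$, verify that the repelling directions always cancel in the limit, and confirm that the surviving class is the motivic Chern class of the Bia{\l}ynicki-Birula cell $(W^o)_F^{\sigma+}$ itself and not merely a class with the same Newton polytope. That all these limits are legitimate and commute with the push-forwards produced by Lefschetz--Riemann--Roch is because they take place in the localized K-theory $S^{-1}K_\T(F)[y]$ of the base $F$, on which $\sigma$ acts trivially.
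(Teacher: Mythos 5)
Your reduction coincides with the paper's own proof: Corollary \ref{cor:SNC} together with the splitting $eu(\nu(F\subset Y))=eu(\nu(F\subset D_I))\cdot\prod_{j\in I}\bigl(1-\O_Y(-D_j)_{|F}\bigr)$ rewrites the quotient, and the elementary factors $1/\bigl(1-\O_Y(-D_j)_{|F}\bigr)$ have $\sigma$-limits $1$ or $0$ according to the sign of $\we_F(\O_Y(D_j))$, exactly as in the paper. Where you diverge is the remaining core identity
$$\lim_\sigma\frac{\mC\left(D_I^o\subset D_{I}\right)_{|F}}{eu\left(\nu(F\subset D_{I})\right)}=\mC\left((D_I^o)_F^{\sigma+}\to F\right),$$
which the paper does not reprove at all: it is quoted as a known theorem, \cite[Theorem 4.2]{FRW} (see also \cite{WeBB} and \cite{K1}). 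You propose to establish it by hand, and this is where the genuine gap lies.

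That identity is a global statement, while your argument for it is purely local at $F$. The left-hand side depends only on the germ of $(W,W^o)$ along $F$, but the right-hand side is the motivic Chern class of the honest attracting set $(W^o)_F^{\sigma+}$, an open subset of the Bia{\l}ynicki-Birula cell of $F$ (an affine bundle over $F$), which may be cut by boundary components meeting the cell far from $F$; note also that $F$ can be positive-dimensional, so a component $D_j$ with $F\not\subset D_j$ may still meet both $F$ and the cell, and then the germ at $F$ is not boundary-free even in your ``$q=0$'' case. Your base case assumes an equivariant product structure $W\cong\nu(F\subset W)$ near $F$, which is not available in the algebraic category, and even granting such a model the computation of the fiberwise limits $\lim_\sigma\lambda_y((\nu^{\pm})^*)/eu(\nu^{\pm})$ only evaluates the limit as an element of $K(F)[y]$; the assertion that these values ``assemble into'' $\mC\bigl((W^o)_F^{\sigma+}\to F\bigr)$ is precisely the content of the cited theorem and is nowhere argued. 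Likewise your induction transforms only the left-hand side: to close it you would additionally need the geometric facts that $(W^o)_F^{\sigma+}=\varnothing$ when some normal weight $w_a<0$, and that $\mC\bigl((W^o)_F^{\sigma+}\to F\bigr)=(-1-y)^{q}\,\mC\bigl((W_B^o)_F^{\sigma+}\to F\bigr)$ when all $w_a>0$, neither of which appears in your sketch (you yourself flag this as the ``delicate point'' but do not resolve it). The efficient repair is to invoke \cite[Theorem 4.2]{FRW} (or \cite{WeBB}, \cite{K1}) at this point, as the paper does; otherwise you must reprove that theorem, which requires the Bia{\l}ynicki-Birula affine-fibration structure of the cells and a global additivity argument, not just the local model at $F$.
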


\begin{proof}
	Corollary \ref{cor:SNC}
	implies that
	$$
	\lim_\sigma\frac{\mC\left(Y^o\subset Y\right)_{|F}}{eu(\nu(F\subset Y))}\prod_{j \in I} \O_Y(D_j)_{|F}=
	\lim_\sigma\frac{\mC\left(D_I^o\subset D_{I}\right)_{|F}}{eu(\nu(F\subset D_{I}))}
	(1+y)^{|I|}\prod_{j\in I}\frac1{1-\O_Y(-D_j)_{|F}}\,.
	$$
	Theorem 4.2 from \cite{FRW} (see also theorem 10 from \cite{WeBB} and theorem 4.4 from \cite{K1})
	implies that the limit
	$$\lim_\sigma\frac{\mC\left(D_I^o\subset D_{I}\right)_{|F}}{eu(\nu(F\subset D_{I}))}$$ exists and it is equal to
	$$\mC\left((D_I^o)_F^{\sigma+}\to F\right)\,.$$
	Moreover
	$$ \lim_\sigma\frac{1}{1-\O_Y(-D_j)_{|F}}=
	\begin{cases}
	0& \text{ if the weight of }\O_Y(D_j)_{{|F}}\text{ is positive,} \\
	1& \text{ if the weight of }\O_Y(D_j)_{{|F}}\text{ is negative.}
	\end{cases}
	$$
	Multiplying these  limits we arrive at the desired equality.
\end{proof}

Applying Lemma \ref{lem:FRW} and the decomposition \eqref{wyr:1} we obtain the proof of Theorem \ref{thm:main}.

\section{Choice of the boundary divisor}
\label{sec:dependency}
We consider situation described in section \ref{sec:setup}. We will show that the slope $$s=\L^{1/n}\in Pic(M)\otimes \Q$$ determines a $\Q$-Cartier divisor $\Delta_{\ee_0,s}$ in $X$.

\begin{definition}\rm We say that a meromorphic section  of a linearized bundle $\L_{|X}$ is good if
	\begin{itemize}
		\item the section $v$ is an eigenvector of the torus $\T$,
		\item the section $v$ does not vanish nor has a pole at the center of the cell.
\end{itemize}\end{definition}
For a slope $s=\L^{1/n}$ any good section of $\L$ defines the divisor $\Delta_{\ee_0,s}=\frac1n div(v)$.
The divisor $\Delta_{\ee_0,s}$ is $\T$-invariant, hence  $\Delta_{\ee_0,s}$ has support in $\partial X$.
\medskip

The twisted motivic Chern class $i_*\mC(X,\partial X;\Delta_{\ee_0,s})$ is considered as an element of the K-theory of the ambient space $M$.
The axioms \ref{def:enve} are based on the comparison of this element with the corresponding elements for different choices of the fixed point.
The previous sections imply that  $\mC(X,\partial X;\Delta_{\ee_0,s})$ satisfies the normalization axiom (\ref{def:enve}.2) and the Newton inclusion property (\ref{def:enve}.3), provided that $\Delta_{\ee_0,s}$ is given by a good section of $\L$.
It remains to show that  good sections exist. Moreover we will show that the divisor associated with the slope $s=\L^{1/n}$ is unique, although we already know that stable envelopes are unique.

\subsection{Existence of a good section}
Denote by $\iee$ the inclusion $$\iee: X\hookrightarrow M.$$
Let  $s=\L^{1/n}\in Pic(M)\otimes \Q$ a slope.
By \cite[Lemma 2.14]{Brion}  we can assume that $\L$ admits a linearization and we fix a linearization.

\begin{lemma} There exists a good section. \end{lemma}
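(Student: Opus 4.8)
The plan is to work first on the cell $X^o=M^+_{\ee_0}$ itself and only then transport the result to $X$. The essential observation is that the center $\ee_0$ lies in the open dense smooth subset $X^o$, and that $X^o$ — being the attracting cell of an isolated fixed point — is an affine space on which $\T$ acts linearly, with $\ee_0$ as the origin (see \cite{BB}); in particular $X^o$ is affine, so every line bundle on it is generated by its global sections. So I would first produce a $\T$-eigensection of $\L$ over $X^o$ that is regular and nonzero at $\ee_0$, and then view it as a rational section of $\L_{|X}$.

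Concretely: the space $H^0(X^o,\L_{|X^o})$ is a rational representation of $\T$, hence a direct sum of weight spaces, and evaluation at $\ee_0$ gives a $\T$-equivariant map $H^0(X^o,\L_{|X^o})\to \L_{\ee_0}$ which is surjective because $X^o$ is affine. The target $\L_{\ee_0}$ is the one-dimensional representation of weight $\we_{\ee_0}(\L)$, so already the restriction of this map to the weight-$\we_{\ee_0}(\L)$ subspace is surjective; I would pick $v_0$ in that subspace with $v_0(\ee_0)\neq 0$. Then $v_0$ is a $\T$-eigensection of $\L$ over $X^o$ which is regular and nonvanishing at the center of the cell. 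Since $X$ is irreducible and $X^o$ is dense and open, $v_0$ is the restriction of a unique rational (meromorphic) section $v$ of $\L_{|X}$; because the restriction of rational sections to $X^o$ is injective and $\T$-equivariant, $v$ is again a $\T$-eigenvector, and since $v$ agrees with $v_0$ on a neighbourhood of $\ee_0\in X^o$ it has no zero or pole there. Hence $v$ is good. One also gets for free that $\mathrm{div}(v)$ is Cartier — locally $\L$ is trivial and $v$ is a nonzero rational function — and supported away from $\ee_0$, so, being $\T$-invariant, contained in $\partial X$, as needed to define $\Delta_{\ee_0,s}$.

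I do not expect a genuine obstacle: the content is essentially formal once one knows that a Bia{\l}ynicki-Birula cell with isolated center is an affine space. The mild points to check are that the weight-graded piece of the evaluation map is still surjective (this uses global generation over an affine variety together with the reductivity of $\T$) and that the $\T$-eigenvector property passes from regular sections over $X^o$ to rational sections over $X$. An alternative, slightly more structural route is to show directly that $Pic^{\T}(X^o)\simeq \Hom(\T,\C^*)$ — by equivariant homotopy invariance, contracting $X^o$ onto $\ee_0$ along the linear $\T$-action — so that $\L_{|X^o}$ is equivariantly the trivial bundle twisted by the character $\we_{\ee_0}(\L)$, whose constant section is visibly good; this version has the advantage of also yielding the uniqueness of $\Delta_{\ee_0,s}$ treated in the following subsection.
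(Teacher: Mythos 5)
Your argument is correct, but it reaches the eigensection by a different technical route than the paper. The paper works directly with global sections of $\iee^*\L$ on the projective variety $X$: assuming first that $\L$ is globally generated, it picks a $\T$-eigenvector of $H^0(X;\iee^*\L)$ not vanishing at $\ee_0$, and in the general case it twists by a power of an equivariant ample bundle $\O(1)$ and takes the ratio $v_1/v_2$ of two good regular sections to produce a good meromorphic section of $\L$. You instead exploit that the cell $X^o=M^+_{\ee_0}$ is affine (an affine space, as the paper itself notes for isolated fixed points), so $\L_{|X^o}$ is globally generated with no ampleness twist needed; the weight decomposition of $H^0(X^o,\L_{|X^o})$ together with equivariance of evaluation at the fixed point $\ee_0$ yields an eigensection of weight $\we_{\ee_0}(\L)$ nonvanishing there, which then extends uniquely to a $\T$-eigen rational section of $\L_{|X}$ since $X^o$ is dense in the irreducible $X$. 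The common core is the same linear-algebra step (eigensections span, evaluation at the fixed point is equivariant and surjective); what differs is the device ensuring enough sections exist. Your version avoids the ample twist and the ratio trick and is slightly more self-contained, at the cost of invoking affineness of the BB cell and the rationality (local finiteness) of the $\T$-action on $H^0(X^o,\L_{|X^o})$, both standard; the paper's version only uses projectivity of $X$ and equivariant ampleness, so it would apply verbatim even if the open stratum were not affine. Your closing alternative via $Pic^{\T}(X^o)\simeq\Hom(\T,\C^*)$ is also fine, but note it needs the $\T$-equivariant identification of the cell with a linear representation, a stronger input than the affineness used in your main argument.
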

\begin{proof}
	First assume that  $\L$ is generated by global sections. The torus $\T$ acts on the global sections $H^0(X;\iee^*\L)$.  Since the eigenvectors span that space we can find a section $v$ such that
	\begin{itemize}
		\item $v\in H^0(X;\iee^*\L)$ is an eigenvector of $\T$,
		\item $v(\ee_0)\neq0$.
	\end{itemize}
	If $\L$ is not globally generated we can tensor it with a suitable power of an equivariant ample bundle $\O(1)$ and  argue as before. {If $v_1$ is a good section of $\L(n)$ and $v_2$ is a good section of $\O(n)$ then $v_1/v_2$ is a good meromorphic  section of $\L$.}
\end{proof}
\begin{remark}\rm For a line bundle $\L$ considered with a fixed linearization and $\O_X(n\Delta_{\ee_0,s})$ with the natural linearization we have
	$$\iee^*\L=t^{\we_{\ee_0}(\L)}\cdot\O_X(n\Delta_{\ee_0,s}) \,.$$
\end{remark}

\subsection{Uniqueness of the boundary divisor}

\begin{lemma}\label{lem:ind}
	Let $v_1$ and $v_2$ be two good meromorphic sections of $\L$ on $X$. Then $v_1$ and $v_2$ are proportional.
\end{lemma}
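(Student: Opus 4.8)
The plan is to prove that the meromorphic function $\phi:=v_1/v_2$ on $X$ is a nonzero constant.

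Recall from Section~\ref{sec:setup} that $X^o=M^+_{\ee_0}$ is the attracting set of the isolated fixed point $\ee_0$, hence a smooth irreducible variety (isomorphic to an affine space); consequently $X=\overline{X^o}$ is irreducible, $X^o\subset X$ is dense, and $\ee_0\in X^o$ is a smooth point. Since $v_2$ does not vanish at $\ee_0$ it is not identically zero, so $\phi=v_1/v_2$ is a well-defined element of the function field $\C(X)$. Both $v_1$ and $v_2$ are $\T$-eigenvectors of $\L_{|X}$ for the fixed linearization, say $t\cdot v_j=t^{w_j}v_j$ with $w_j\in\Hom(\T,\C^*)$; hence $\phi$ is a $\T$-eigenfunction, $t\cdot\phi=t^{\chi}\phi$ with $\chi=w_1-w_2$. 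Working in a local trivialization of $\L$ near $\ee_0$, both $v_1$ and $v_2$ become regular functions that are nonvanishing at $\ee_0$ (the second condition in the definition of a good section), so the same holds for $\phi$; thus $\phi$ is regular at $\ee_0$ and $\phi(\ee_0)\in\C^*$.

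Let $\sigma\colon\C^*\to\T$ be the one-parameter subgroup defining the Bia{\l}ynicki-Birula decomposition, so that $\lim_{t\to0}\sigma(t)\cdot x=\ee_0$ for every $x\in X^o$, and set $k:=\langle\sigma,\chi\rangle\in\Z$. Pick $x\in X^o$ not lying in the polar locus of $\phi$; since that locus is a proper closed subset of $X$ not containing $\ee_0$, the orbit $\sigma(\C^*)\cdot x$ cannot be entirely contained in it (its closure contains $\ee_0$), so for all $t$ in a punctured neighbourhood of $0$ the point $\sigma(t)\cdot x$ lies in $X^o$ off the polar locus and $\phi(\sigma(t)\cdot x)=t^{k}\phi(x)$. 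Letting $t\to0$, the left-hand side converges to $\phi(\ee_0)$, which is finite and nonzero because $\phi$ is regular and nonvanishing at $\ee_0$ and $\sigma(t)\cdot x\to\ee_0$. Comparing with $t^{k}\phi(x)$ forces $k=0$ and $\phi(x)=\phi(\ee_0)$. As this holds on a dense subset of the irreducible variety $X$, the function $\phi$ is the constant $c:=\phi(\ee_0)\in\C^*$, i.e. $v_1=c\,v_2$. In particular $\tfrac1n\operatorname{div}(v_1)=\tfrac1n\operatorname{div}(v_2)=\Delta_{\ee_0,s}$ is independent of the chosen good section.

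The argument is short, and the only delicate point is the identity $\phi(\sigma(t)\cdot x)=t^{k}\phi(x)$ together with the limit $t\to0$: this is precisely where the defining property of the cell — that $\sigma$ contracts all of $X^o$ to $\ee_0$ — is used, and one needs the (routine) observation that a generic orbit avoids the indeterminacy locus of the rational function $\phi$. Equivalently, one may identify $X^o$ with the representation $T^+_{\ee_0}M$, whose weights all pair strictly positively with $\sigma$, and note that the only rational eigenfunctions which are regular and nonzero at the origin are the constants; this yields the same conclusion.
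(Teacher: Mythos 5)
Your proof is correct and follows essentially the same route as the paper: both form the rational $\T$-eigenfunction $v_1/v_2$, use that it is regular and nonvanishing at the fixed point $\ee_0$, and exploit the contraction of the cell $X^o$ to $\ee_0$ to conclude it is constant. The only cosmetic difference is that you run the limit explicitly along the one-parameter subgroup, while the paper first deduces triviality of the character from equivariance at the fixed point and then argues constancy on orbits and on $X$.
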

\begin{proof} The sections $v_i$ are defined on a $\T$-equivariant open neighbourhood $U$ containing $\ee_0$. Let $\chi_i:\T\to\C^*$ be the corresponding characters, i.e.
	$t\cdot v_i=\chi_i(t)v$ for $i=1,2$.
	The quotient $v_1/v_2$ defines a rational function
	$$\theta:=\frac{v_1}{v_2}:X\supset U\to \C\,.$$
	The map $\theta$ is equivariant if we consider the action of the torus $\T$ on $\C$ given by the character $\frac{\chi_1}{\chi_2}$. Moreover $\theta$ does not have zero nor pole at  $\ee_0$. Since $\ee_0$ is a fixed point, $\theta(\ee_0)\neq 0$ is fixed as well. Thus the action of $\T$ on $\C$ is trivial. It follows that characters $\chi_1$ and $\chi_2$ coincide and the map $\theta$ is constant on the orbits of $\T$. The map $\theta$ is defined at $\ee_0$ so it is defined and constant on the whole Bia{\l}ynicki-Birula cell $X^o$.
	It follows that $\theta$ is also defined and constant  on the closure $X$ of the BB-cell $X^o$. Thus $\theta=v_1/v_2$ is constant.
\end{proof}

This finishes construction of the characteristic classes of Bia{\l}ynicki-Birula cells which satisfy the normalization and the Newton  inclusion property \ref{def:enve}(2,3) of Okounkov's stable envelopes in the K-theory.

\section{Support axiom}

\label{sec:support}

We consider the situation described in section \ref{sec:setup}.
The support axiom Def \ref{def:enve}(1)
  \begin{equation}\label{def:enve1}
  Stab^s(\ee_0) \quad\ \text{is supported by} \quad \bigsqcup_{\ee \le \ee_0} (T^*M)^+_{\ee} \,.\end{equation}
was  discussed in \cite[Remark after Theorem 3.1]{RTV-trig}, \cite[Lemma 5.2-4]{RTV} and in \cite{K}.
Suppose that the variety $M$ is projective. Then by \cite{BB2} the relation
$$\ee_1 \preccurlyeq^o \ee_2 \quad \Longleftrightarrow \quad M_{\ee_1}^+\cap\overline{M_{\ee_2}^+}\neq\emptyset $$
generates a partial order $\preccurlyeq$ on the fixed point set.
  We have the support containment
\begin{equation}\label{eq:stupid_zawieranie}
i_*\mC(\overline{M^+_{\ee_0}},\partial M^+_{\ee_0};\Delta_{\ee_0,s})
\quad\text{is supported by}\quad \bigsqcup_{\ee\le \ee_0} M^+_{\ee}\,.\end{equation}
We need a stronger condition for $\pi^*i_*\big(\mC(\overline{M^+_{\ee_0}},\partial M^+_{\ee_0};\Delta_{\ee,s})\big)$ given by the axiom \eqref{def:enve1}.
By lemma \ref{lem:pod} (cf. \cite[Lemma {5.7}]{K}) the condition \eqref{eq:stupid_zawieranie} together with the divisibility condition
\begin{equation}\label{eq:podzielnosc}
\mC(M^+_{\ee})_{|\ee}\quad \text{divides}\quad i_*\mC(\overline{M^+_{\ee_0}},\partial M^+_{\ee_0};\Delta_{\ee_0,s})_{|\ee}\end{equation}
implies the axiom \eqref{def:enve1}.
\medskip

Suppose there exists a neighbourhood of $\ee$ such that the BB-decomposition is of a product form (see \cite[Def 3.10]{K}). Then the divisibility condition holds. Although the assumption that the stratification is locally a product is a strong demand, there are important examples of spaces having that property. It is enough to mention the homogeneous spaces $G/P$ where the BB-cells are the Schubert cells, see \cite[Theorem {9.1}]{K}.
By Proposition \ref{prop:prod} the divisibility axiom holds. This way we have proven the Corollary given in the introduction.
\medskip

A weaker condition guaranteeing the divisibility condition is existence of a sufficiently transverse slice. A natural candidate for a slice is the minus-cell
$M_{\ee}^-$, see \eqref{eq:minus}. If the BB-decomposition is a stratification satisfying the Whitney condition, then $M_{\ee}^-$ is transverse not only to $M_{\ee}^+$ but also to the neighbouring strata. It is not clear whether Whitney conditions imply divisibility of twisted motivic Chern classes. A stronger notion of transversality would suffice. It is called in \cite[Definition 8.4]{FRW2}  the motivically transversality.
Recall that for a resolution $Y\to X$ with a SNC divisor $E=\bigcup_{i\in I} E_i\subset Y$ and for any subset $J\subset I$  the intersection $\bigcap_{i\in J}E_i$ is denoted by $E_J$.

\begin{proposition}Suppose  that
the map $f:Y\to M$ is  a resolution  of
$(\overline{M_{\ee_0}^+},\partial \overline{M_{\ee_0}^+})$. Let $E=f^{-1}(\partial \overline{M_{\ee_0}^+})$. Assume that $f$ restricted to the intersection   $E_J$ is transverse to $M^-_\ee$
$$f_{|E_J}
\;\pitchfork\;M^-_\ee$$
for any $J\subset I$.
Then
\begin{equation}\mC(M^-_\ee)_{|\ee}\quad\text{divides}\quad
\mC(M^+_{\ee_0},\partial M^+_{\ee_0};\Delta_{\ee_0,s})_{|\ee}\,.
\end{equation}
\end{proposition}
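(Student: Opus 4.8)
The plan is to reduce the divisibility statement at the fixed point $\ee$ to a computation on the resolution using the Lefschetz--Riemann--Roch formula, exactly as in the proof of Theorem \ref{thm:main}, and then exploit the transversality hypothesis to recognize the restricted class along $M^-_\ee$ as a twisted motivic Chern class of a lower-dimensional pair. First I would restrict everything to a $\T$-invariant neighbourhood of $\ee$; by Proposition \ref{prop:prod}(i) this does not change the localized classes, so we may assume $M^-_\ee$ is a smooth $\T$-invariant locally closed subvariety through $\ee$ with $T_\ee M = T_\ee M^+_\ee \oplus T_\ee M^-_\ee$ as $\T$-representations, and $\mC(M^-_\ee)_{|\ee} = \lambda_y(T^*_\ee M^-_\ee)$. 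The goal becomes: show that $\lambda_y(T^*_\ee M^-_\ee)$ divides $i_*\mC(X,\partial X;\Delta_{\ee_0,s})_{|\ee}$ in $K_\T(\{\ee\})[y]$.

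Next I would pull the problem back to the resolution $f:(Y,\partial Y)\to(\overline{M^+_{\ee_0}},\partial\overline{M^+_{\ee_0}})$. Writing $D=f^*\Delta_{\ee_0,s}$ and applying Lefschetz--Riemann--Roch as in \eqref{wyr:1}, we have
$$\frac{i_*\mC(X,\partial X;\Delta_{\ee_0,s})_{|\ee}}{eu(T_\ee M)} = \sum_{F\subset f^{-1}(\ee)\cap Y^\T} f_{F*}\left(\frac{\mC(Y,\partial Y;D)_{|F}}{eu(\nu(F\subset Y))}\right).$$
Here $eu(T_\ee M) = eu(T_\ee M^+_\ee)\cdot eu(T_\ee M^-_\ee)$, and $eu(T_\ee M^-_\ee)$ is invertible. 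So it suffices to show that $\lambda_y(T^*_\ee M^-_\ee)/eu(T_\ee M^-_\ee)$ divides each summand, i.e.\ that each $f_{F*}\big(\mC(Y,\partial Y;D)_{|F}/eu(\nu(F\subset Y))\big)$ is divisible by $\lambda_y(T^*_\ee M^-_\ee)/eu(T^-_\ee M^-_\ee)$ after multiplying through by $eu(T^+_\ee M^+_\ee)$ — more cleanly, that the whole left side is $eu(T^+_\ee M)\cdot\big(\text{something}\big)\cdot\lambda_y(T^*_\ee M^-_\ee)$ divided by appropriate Euler factors. This is where the transversality assumption enters: because $f_{|E_J}\pitchfork M^-_\ee$ for every $J$, the preimage $f^{-1}(M^-_\ee)$ is again smooth with $E\cap f^{-1}(M^-_\ee)$ a SNC divisor on it, and $f$ restricts to a resolution $(f^{-1}(M^-_\ee), E\cap f^{-1}(M^-_\ee))\to (\overline{M^-_\ee\cap M^+_{\ee_0}},\ \partial)$ of the corresponding pair, compatibly with the decomposition $E_J\cap f^{-1}(M^-_\ee)$. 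By the local product/functoriality behaviour of $\mC$ along a transverse intersection (Verdier--Riemann--Roch applied to the normal bundle, as used in Lemma \ref{lem:SNC} and in the blow-up lemmas), the class $\mC(Y,\partial Y;D)_{|F}$ factors as the pull-back of $\mC$ of the restricted pair times the $\lambda_y$-class of the (equivariant) normal bundle to $f^{-1}(M^-_\ee)$ in $Y$, whose weights at $\ee$ are precisely the weights of $T^+_\ee M$.

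Carrying this factorization through the push-forward $f_{F*}$ and summing over $F$, the right-hand side collapses to $eu(T^+_\ee M)\cdot \lambda_y\big((T^+_\ee M)^*\big)^{-1}\cdot(\text{terms})$ times $i_*\mC(\overline{M^-_\ee\cap M^+_{\ee_0}},\partial;\Delta')_{|\ee}$ for the induced divisor $\Delta'$ on the slice; but by the normalization computation of Section 8 the slice class localized at its own center $\ee$ equals $\lambda_{-1}(T^*_\ee M^-_\ee)$ up to a line-bundle twist (the twist is trivial since $\ee$ lies in the interior), and this carries the factor $\lambda_y(T^*_\ee M^-_\ee)$ we need. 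Thus $\mC(M^-_\ee)_{|\ee}=\lambda_y(T^*_\ee M^-_\ee)$ divides $i_*\mC(X,\partial X;\Delta_{\ee_0,s})_{|\ee}$. The main obstacle I anticipate is making the "transverse restriction commutes with $\mC(\,\cdot\,,\partial\,\cdot\,;\Delta)$" step fully rigorous: one must check that $f$ restricted to $f^{-1}(M^-_\ee)$ is genuinely a SNC resolution of the sliced pair (this is exactly the motivic transversality condition of \cite[Definition 8.4]{FRW2}), that the induced boundary divisor and the round-up $\lceil\,\cdot\,\rceil$ behave well under this restriction, and that the Verdier--Riemann--Roch factorization of $\lambda_y\Omega^1_Y(\log\partial Y)$ along the normal bundle to $f^{-1}(M^-_\ee)$ produces precisely the Euler/$\lambda_y$ factors attached to $T^+_\ee M$ — the bookkeeping of weights at the possibly several fixed components $F$ over $\ee$ is the delicate part.
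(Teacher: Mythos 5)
Your central mechanism is the same as the paper's: transversality of $f_{|E_J}$ to the slice $M^-_\ee$ gives, via the base change $\iota_J^*\circ f_{J*}=g_{J*}\circ\tilde\iota_J^*$ and the splitting $\tilde\iota_J^*(TE_J)=T(\widetilde M^-_\ee\cap E_J)\oplus g_J^*\nu(M^-_\ee\subset M)$, a common factor $\lambda_y\big(\nu^*(M^-_\ee\subset M)\big)$ in each term $f_{J*}\big(\O_Y(\lceil f^*\Delta_{\ee_0,s}\rceil)\cdot\lambda_y(T^*E_J)\big)$ restricted to $M^-_\ee$; at $\ee$ this factor equals $\lambda_y\big((T^+_\ee M)^*\big)$, since $\nu(M^-_\ee\subset M)_\ee=T_\ee M^+_\ee$. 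That is exactly the paper's argument, carried out term by term over $J$ using \eqref{def:mc} and Definition \ref{def-twisted}; the Lefschetz--Riemann--Roch decomposition over the components $F\subset f^{-1}(\ee)$ that you insert is not needed and only complicates the bookkeeping. Note also that the factor this argument extracts, $\lambda_y(T^*_\ee M^+_\ee)=\mC(M^+_\ee)_{|\ee}$, is precisely the divisibility required in \eqref{eq:podzielnosc} and Lemma \ref{lem:pod}; the $\mC(M^-_\ee)_{|\ee}$ in the displayed statement is a slip, as the paper's own proof and its application both concern the conormal directions of the slice, i.e.\ $T^+_\ee M$.

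The genuine gap is your concluding step. Having correctly identified that the normal-bundle factor carries the weights of $T^+_\ee M$, you then try to manufacture the factor $\lambda_y(T^*_\ee M^-_\ee)$ by evaluating the sliced class ``at its own center $\ee$''. This fails twice over. First, $\ee$ is not an interior point of the sliced pair: $\ee\in M^+_\ee$, which is disjoint from $M^+_{\ee_0}$, so $\ee$ lies in the boundary of $\overline{M^-_\ee\cap M^+_{\ee_0}}$, where the Section 8 normalization computation does not apply (your parenthetical ``the twist is trivial since $\ee$ lies in the interior'' is false). Second, even at an interior point the normalization produces $\lambda_{-1}$ of the negative directions times $\lambda_y$ of the positive ones; the quantity you quote, $\lambda_{-1}(T^*_\ee M^-_\ee)$, is an Euler class and is not divisible by $\lambda_y(T^*_\ee M^-_\ee)$, so it cannot ``carry'' the factor you claim. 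No information about the value at $\ee$ of the sliced class is needed at all: the proof should stop once the factor $\lambda_y\nu^*(M^-_\ee\subset M)$ has been pulled out of each push-forward, which already gives divisibility by $\mC(M^+_\ee)_{|\ee}$, the statement actually used for the support axiom.
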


\begin{proof}For $J\subset I$ let $f_J=f_{|E_J}$. Set $\widetilde M^-_\ee= f^{-1}_J(M^-_\ee)$ and
denote by $g_J$ the restriction of $f_J$ to $\widetilde M_\ee^-\cap E_J$. Let $\iota_J$, $\tilde \iota_J$ be the inclusions
$$\begin{matrix}\widetilde M_\ee^-\cap E_J&\stackrel{\widetilde\iota_J}\longrightarrow&E_J\\\\
^{g_J}\big\downarrow\phantom{^{g_J}}&&\phantom{{}^{f_J}}\big\downarrow{}^{f_J} \\\\
M^-_\ee&\stackrel{\iota_J}\longrightarrow&M
\end{matrix}$$
The transversality condition implies, that $\widetilde M^-_\ee$ is smooth and
$$\widetilde\iota^*_J(T E_J)=T(\widetilde M^-_\ee\cap E_J) \oplus g^*_J(\nu(M^-_\ee\subset M))\,.$$
Let $\widetilde\L=\O_Y(\lceil(f^*(\Delta_{e_0,s})\rceil))$ and $M_J=M_\ee^-\cap E_J$.
Again, by transversality $\iota_J^*\circ f_{J*}=g_{J*}\circ\tilde \iota^*_J$, hence
\begin{align*}\iota_J^*f_{J*}(\widetilde\L\cdot\lambda_y(T^*E_J))
&=g_{J*}\tilde \iota^*_J(\widetilde\L\cdot\lambda_y(T^*E_J))\\
&=g_{J*}\big(\tilde \iota^*_J\widetilde\L\cdot\lambda_y(T^*\widetilde M_J)\cdot g_J^*(\lambda_y\nu^*(M^-_\ee\subset M))\big)\\
&=g_{J*}\big(\tilde \iota^*_J\widetilde\L\cdot\lambda_y(T^*\widetilde M_J)\big)\cdot \lambda_y\nu^*(M^-_\ee\subset M)\,.
\end{align*}
At the point $\ee$ the normal bundle $\nu(M^-_\ee\subset M)$ is equal to $T_\ee M^+_\ee$. We conclude that
$$f_{J*}(\widetilde\L\cdot\lambda_y(T^*E_J))_\ee\quad\text{is divisible by}\quad \lambda_y(T^*_\ee M^+_\ee)\,.$$
By the formula \eqref{def:mc} combined with the definition of the twisted classes Def.~\ref{def-twisted} we obtain the divisibility.\end{proof}
\medskip

To sum-up: the divisibility condition holds provided that the BB-decomposition enjoys certain equisingularity property. The recent results of J.~Sch{\"u}rmann \cite{Sch} about non-characteristic pull-back suggest that the Whitney conditions would be enough. That is so for untwisted motivic Chern class. The results of Sch{\"u}rmann imply that the divisibility condition holds for negative slopes which are sufficiently close to zero, i.e. when $\lceil f^*(\Delta_{\ee_0,s})\rceil=0$.

\end{document}